\newtheorem{theorem}{Theorem}[section]
\newtheorem{corollary}[theorem]{Corollary}
\newtheorem{lemma}[theorem]{Lemma}
\newtheorem{proposition}[theorem]{Proposition}
\newtheorem*{claim}{Claim}
\theoremstyle{definition}
\newtheorem{definition}{Definition}[section]
\newtheorem{remark}{Remark}[section]
\newtheorem{example}{Example}[section]
\newtheorem*{notation}{Notation}
\newcommand{\lra}{\longrightarrow}
\DeclareMathOperator{\argmin}{\mathrm{arg\, min}}
\DeclareMathOperator{\End}{\mathrm{End}}
\DeclareMathOperator{\tr}{tr}
\DeclareMathOperator{\Hess}{Hess}
\newcommand\supp{\mathop{\rm supp}}
\begin{document}

\title[Means \& Karcher equations]{Operator means of probability measures and generalized Karcher equations}
\author[Mikl\'os P\'alfia] {Mikl\'os P\'alfia}
\address{Department of Mathematics, Kyoto University, Kyoto 606-8502, Japan.}
\email{palfia.miklos@aut.bme.hu}

\subjclass[2000]{Primary 47A64, 46L05, Secondary 53C20, 53C35}
\keywords{operator monotone function, operator mean, Karcher mean}


\date{\today}

\begin{abstract}
In this article we consider means of positive bounded linear operators on a Hilbert space. We present a complete theory that provides a framework which extends the theory of the Karcher mean, its approximating matrix power means, and a large part of Kubo-Ando theory to arbitrary many variables, in fact, to the case of probability measures with bounded support on the cone of positive definite operators. This framework characterizes each operator mean extrinsically as unique solutions of generalized Karcher equations which are obtained by exchanging the matrix logarithm function in the Karcher equation to arbitrary operator monotone functions over the positive real half-line. If the underlying Hilbert space is finite dimensional, then these generalized Karcher equations are Riemannian gradients of convex combinations of strictly geodesically convex log-determinant divergence functions, hence these new means are the global minimizers of them, in analogue to the case of the Karcher mean as pointed out. Our framework is based on fundamental contraction results with respect to the Thompson metric, which provides us nonlinear contraction semigroups in the cone of positive definite operators that form a decreasing net approximating these operator means in the strong topology from above.
\end{abstract}

\maketitle

\section{Introduction}
Let $E$ be Hilbert space and $S(E)$ denote the Banach space of bounded linear self-adjoint operators. Let $\mathbb{P}\subseteq S(E)$ denote the cone of positive definite operators on $E$. In this article we are concerned with means of members of $\mathbb{P}$ that enjoy certain attractive properties that recently became important from the point of view of averaging in the finite dimensional case, see for example \cite{AF,fillard,karcher,Ba08,BBC}. Usually the main difficulties here arise from the required property of operator monotonicity i.e., our means must be monotone with respect to the positive definite order on $\mathbb{P}$. The 2-variable theory of such functions is relatively well understood: each such function is represented by an operator monotone function according to the theory of Kubo-Ando \cite{kubo}. However in the several variable case we have no such characterization of operator monotone functions.

When $E$ is finite dimensional, then there are additional geometrical structures on $\mathbb{P}$ that are used to define n-variable operator means \cite{ando,bhatiaholbrook,lawsonlim,moakher}. In this setting $\mathbb{P}$ is just the cone of positive definite $n$-by-$n$ Hermitian matrices, where $n$ is the dimension of $E$. It is a smooth manifold as an open subset of the vector space of $n$-by-$n$ Hermitian matrices (which is just $S(E)$ in this case) and has a Riemannian symmetric space structure $\mathbb{P}\cong\mathrm{GL}(E)/\mathbb{U}$, where $\mathbb{U}$ is the unitary group and $\mathrm{GL}(E)$ is the general linear group over $E$ \cite{bhatia,bhatia2}. This symmetric space is nonpositively curved, hence a unique minimizing geodesic between any two points exists \cite{bhatia2}. The midpoint operation on this space, which is defined as taking the middle point of the geodesic connecting two points, is the geometric mean of two positive definite matrices \cite{bhatia2}. The Riemannian distance on the manifold $\mathbb{P}$ is of the form
\begin{equation*}
d(A,B)=\sqrt{Tr\log^2(A^{-1}B)}.
\end{equation*}
Then the (weighted) multivariable geometric mean or Karcher mean of the k-tuple ${\Bbb A}:=(A_1,\ldots,A_k)\in\mathbb{P}^k$ with respect to the positive probability vector $\omega:=(w_1,\ldots,w_k)$ is defined as the center of mass
\begin{equation}\label{riemmean}
\Lambda(w_1,\dots,w_k;A_{1},\dots,A_{k})=\underset{X\in \textit{P}(n,\mathbb{C})}{\argmin}\sum_{i=1}^k w_id^2(X,A_i).
\end{equation}
The Karcher mean was first considered in this setting in \cite{moakher,bhatiaholbrook}. The Karcher mean $\Lambda(\omega;{\Bbb A})$ is also the unique positive definite solution of the corresponding critical point equation called the Karcher equation
\begin{equation}\label{eq:intr1}
\sum_{i=1}^k w_i\log(X^{-1}A_i)=0,
\end{equation}
where the gradient of the function in the minimization problem \eqref{riemmean} is appears on the left hand side \cite{moakher}. The (entry-wise) operator monotonicity of this mean in ${\Bbb A}$ with respect to a fixed $\omega$, that is $\mathbb{A}\leq\mathbb{B}$ if and only if $A_i\leq B_i$ for $1\leq i\leq k$ implying $\Lambda(\omega;{\Bbb A})\leq \Lambda(\omega;{\Bbb B})$ in the positive definite order, was unknown until it has been first proved in \cite{lawsonlim}. Later in \cite{limpalfia} a one parameter family $P_s(\omega,\mathbb{A})$ of operator monotone means, called the matrix power means, has been constructed as the unique solution of the nonlinear operator equation
\begin{equation}\label{intr:powermeanequ2}
X=\sum_{i=1}^{k}w_{i}X\#_sA_i
\end{equation}
for fixed $s\in[-1,1], w_i>0, \sum_{i=1}^kw_i=1$ and $A_i\in\mathbb{P}$, where
\begin{equation*}
A\#_sB=A^{1/2}\left(A^{-1/2}BA^{-1/2}\right)^{s}A^{1/2}
\end{equation*}
is the weighted two-variable geometric mean. Then in \cite{limpalfia} it has been established that
\begin{equation*}
\lim_{s\to 0+}P_s(\omega,\mathbb{A})=\Lambda(\omega;{\Bbb A})
\end{equation*}
proving once more monotonicity and many other additional properties of $\Lambda$. Later the same approach based on the matrix power means has been adapted in \cite{lawsonlim1}, to extend $\Lambda(\omega;{\Bbb A})$ to the infinite dimensional setting as the unique solution of \eqref{eq:intr1}, even though there exists no Riemannian metric in this case, hence nor geodesically convex potential functions. These additional properties make the Karcher mean $\Lambda(\omega;{\Bbb A})$ the geometric mean to choose among the numerous different geometric means \cite{bhatiaholbrook,lawsonlim12}. Form the geometric and analytic point of view the two characterizations provided by \eqref{riemmean} and \eqref{eq:intr1} may be the most significant.

In this paper we extend the theory of the Karcher mean and the approximating matrix power means appearing in\cite{lawsonlim1,lawsonlim12,limpalfia} in three different ways. Firstly we generalize the setting of \eqref{eq:intr1} by taking any operator monotone function $f:(0,\infty)\mapsto\mathbb{R}$ instead of just the special one $f(x)=\log(x)$. Here operator monotonicity of $f$ is understood as $A\leq B$ implying $f(A)\leq f(B)$ for any $A,B\in\mathbb{P}$. The second generalization is that instead of sums for n-tuples of operators ${\Bbb A}$ in \eqref{eq:intr1} we consider integrals with respect to probability measures supported on $\mathbb{P}$. So our generalization of \eqref{eq:intr1} has the form
\begin{equation}\label{eq:intr2}
\int_{\mathbb{P}} f(X^{-1}A)d\mu(A)=0
\end{equation}
for a probability measure $\mu$ on $\mathbb{P}$. Let $\mathfrak{L}$ denote the set of all operator monotone functions $f:(0,\infty)\mapsto\mathbb{R}$ such that $f(1)=0, f'(1)=1$. Let $\mathscr{P}(\mathfrak{L}\times\mathbb{P})$ denote the set of Borel probability measures with bounded support on the product space $\mathfrak{L}\times\mathbb{P}$. Then our first result is the following.
\begin{theorem}\label{maintheorem1}
Let $\mu\in\mathscr{P}(\mathfrak{L}\times\mathbb{P})$. Then the equation
\begin{equation}\label{eq:intr3}
\int_{\mathfrak{L}\times\mathbb{P}}f\left(X^{-1/2}AX^{-1/2}\right)d\mu(f,A)=0,
\end{equation}
which we call the generalized Karcher equation, has a unique solution in $\mathbb{P}$ that is also operator monotone in the extended sense of Definition~\ref{D:measureOrder} below, which provides a positive definite partial order for measures.
\end{theorem}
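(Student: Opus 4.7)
The plan is to establish existence, uniqueness, and operator monotonicity for the solution of \eqref{eq:intr3} by constructing an approximating family of strictly contractive maps on $(\mathbb{P},d_T)$, where $d_T$ denotes the Thompson metric, taking a monotone decreasing limit as an auxiliary parameter vanishes, and identifying the limit with the desired solution. This parallels and extends the matrix-power-mean strategy of \cite{limpalfia,lawsonlim1} for the classical Karcher equation, in line with the semigroup viewpoint flagged in the abstract.

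\emph{Step 1: approximating fixed-point equation.} For each $t\in(0,1]$ I would define an auxiliary map $T_{t,\mu}\colon\mathbb{P}\to\mathbb{P}$ by averaging over $d\mu(f,A)$ a weighted two-variable Kubo--Ando mean with generator $f$ at parameter $t$, designed so that the fixed-point equation $T_{t,\mu}(X)=X$ degenerates as $t\to 0^+$ to the generalized Karcher equation \eqref{eq:intr3} through the normalization $f(1)=0$, $f'(1)=1$. This is a direct analogue of the power-mean equation \eqref{intr:powermeanequ2}, where the scalar two-variable generator is now drawn from the entire class $\mathfrak{L}$ and smeared against $\mu$.

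\emph{Step 2: Thompson contraction and monotone limit.} Each $T_{t,\mu}$ should be a strict Thompson contraction with constant bounded away from $1$, obtained from the known non-expansiveness of two-variable Kubo--Ando means in $d_T$ sharpened by the parameter $t$; Banach's fixed-point theorem on the complete metric space $(\mathbb{P},d_T)$ then produces a unique fixed point $X_{t,\mu}\in\mathbb{P}$. Monotonicity of Kubo--Ando means in their operator arguments and in the generator $f$ yields that $\mu\mapsto X_{t,\mu}$ is monotone in the measure order of Definition~\ref{D:measureOrder}, while a separate scaling/semigroup argument shows $t\mapsto X_{t,\mu}$ is decreasing in the positive definite order. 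Since the family is uniformly bounded below on the bounded support of $\mu$, the strong-operator limit $X_\mu:=\lim_{t\downarrow 0}X_{t,\mu}$ exists; passing to the limit inside the integral (justified by bounded support together with $f(1)=0$, $f'(1)=1$) shows that $X_\mu$ solves \eqref{eq:intr3} and inherits the operator monotonicity in $\mu$.

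\emph{Main obstacle.} The hardest step will be uniqueness of the solution of \eqref{eq:intr3}: a priori an alternative solution $Y$ need not arise as a limit of the $X_{t,\mu}$. My strategy would be to prove that any solution of \eqref{eq:intr3} is simultaneously a fixed point of each $T_{t,\mu}$ — by rearranging the Karcher-type integral into a weighted mean identity exploiting $f(1)=0$ — so that it must coincide with $X_{t,\mu}$ and hence with $X_\mu$. In the finite-dimensional Karcher case this collapse follows from strict geodesic convexity of the log-determinant divergence, but in the generality of \thmref{maintheorem1}, and particularly in the infinite-dimensional setting where no such potential exists, one must replace convexity by a purely metric/contraction argument; I expect this to be the technical heart of the proof.
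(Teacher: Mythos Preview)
Your Steps 1 and 2 capture the paper's strategy accurately: one defines approximating contractions via Kubo--Ando means, obtains unique fixed points $X_{t,\mu}$, shows monotonicity in both $\mu$ and $t$, and passes to a strong limit solving \eqref{eq:intr3}. One refinement: the paper does not work with an abstract ``Kubo--Ando mean with generator $f$ at parameter $t$'' but first decomposes every $f\in\mathfrak{L}$ through the extremal family $\l_s(x)=\frac{x-1}{(1-s)x+s}$ via Corollary~\ref{C:intLrepr}, and then uses the specific means $M_{s,t}$ with representing functions $f_{s,t}=\l_s^{-1}\bigl(t\,\l_s(\cdot)\bigr)$. The semigroup identity $f_{s,t_1}\circ f_{s,t_2}=f_{s,t_1t_2}$ is what drives both the monotonicity in $t$ and, crucially, the uniqueness argument.

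Your proposed uniqueness argument, however, cannot work as stated. You want to show that any solution $Y$ of \eqref{eq:intr3} is a fixed point of \emph{each} $T_{t,\mu}$, forcing $Y=X_{t,\mu}$ for every $t$. But you have already argued in Step~2 that $t\mapsto X_{t,\mu}$ is strictly decreasing; hence the fixed points for distinct $t$ are generically distinct, and no single $Y$ can equal all of them. (Concretely, in the paper each $X_{t,\mu}=L_t(\mu)$ solves a \emph{different} generalized Karcher equation, with $\l_s$ replaced by $\l_{t+s(1-t)}$; see Theorem~\ref{T:inducedlambda}.)

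The paper's actual route to uniqueness (Theorem~\ref{uniquegeneralizedkarcher}) is more indirect. Given any solution $X$, the semigroup identity yields
\[
0=t\int \l_s\bigl(X^{-1/2}AX^{-1/2}\bigr)\,d\mu
=\int \l_s\bigl(f_{s,t}(X^{-1/2}AX^{-1/2})\bigr)\,d\mu,
\]
so $X$ solves a \emph{new} Karcher equation for the pushed-forward measure $M_{s,t}(X,\mu)$, whose support in the second variable can be forced into an arbitrarily small Thompson ball around $X$ by taking $t$ small. An implicit function theorem argument (Proposition~\ref{propunique0}) gives local uniqueness on such balls, hence $X=\Lambda\bigl(M_{s,t}(X,\mu)\bigr)$. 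Finally, one shows the map $X\mapsto\Lambda\bigl(M_{s,t}(X,\mu)\bigr)$ is itself a strict Thompson contraction, so its fixed point is unique, and therefore $X=\Lambda(\mu)$. The missing ingredient in your sketch is precisely this two-level reduction: local uniqueness plus a contraction at the level of the induced measure, rather than identifying $Y$ with the approximants $X_{t,\mu}$ themselves.
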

We prove this by suitably generalizing the argumentation in \cite{lawsonlim1}. We establish a one parameter family of generalized matrix power means as fixed points of strict contractions, with suitable properties like operator monotonicity, such that they form a decreasing net in the partial positive definite order approximating the solution of \eqref{eq:intr3} from above in the strong topology. Moreover these generalized matrix power means, called induced operator means, are unique solutions of \eqref{eq:intr3} as well, hence fitting into the greater picture. By establishing many attractive properties, like operator monotonicity, of these fixed points, we prove that these properties are preserved in the limit i.e., a solution of \eqref{eq:intr3} has these properties as well. Then we prove that the left hand side of \eqref{eq:intr3} occurs as a Riemannian gradient of certain divergence functions on $\mathbb{P}$ when $E$ is finite dimensional, using the integral formula
\begin{equation}\label{eq:intr4}
f(x)=\int_{[0,1]}\l_s(x)d\nu(s)
\end{equation}
valid for any $f\in\mathfrak{L}$, where $\l_s(x):=\frac{x-1}{(1-s)x+s}$ and $\nu$ is a uniquely determined probability measure on $[0,1]$. In particular we obtain a generalized form of \eqref{riemmean} by means of strictly geodesically convex divergence functions
\begin{equation}\label{eq:div}
LD^s(X,A)=\frac{1}{s(1-s)}\tr\left\{\log[(1-s)A+sX]-\log X\#_{1-s}A\right\}
\end{equation}
where $s\in[0,1]$, $X\#_{1-s}A$ is the weighted geometric mean of $X,A\in\mathbb{P}$. These divergence functions \eqref{eq:div} also appeared recently in \cite{chebbi}. Let $\mathscr{P}([0,1]\times\mathbb{P})$ denote the set of Borel probability measures on $[0,1]\times\mathbb{P}$. Then our second main result is the following.
\begin{theorem}\label{maintheorem2}
Let $\mathbb{P}$ be finite dimensional and $\mu\in\mathscr{P}([0,1]\times\mathbb{P})$. Then the minimization problem
\begin{equation}\label{eq:intr5}
\argmin_{X\in\mathbb{P}}\int_{[0,1]\times\mathbb{P}}LD^s(X,A)d\mu(s,A)
\end{equation}
has a unique positive definite solution in $\mathbb{P}$. Moreover it is the unique solution of the corresponding generalized Karcher equation
\begin{equation*}
\int_{[0,1]\times\mathbb{P}}X^{1/2}\l_s(X^{-1/2}AX^{-1/2})X^{1/2}d\mu(s,A)=0,
\end{equation*}
which is the critical point equation of \eqref{eq:intr5} with respect to the Riemannian metric of the symmetric space $\mathbb{P}$.
\end{theorem}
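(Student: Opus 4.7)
The plan is to reduce existence and uniqueness of the minimizer to \thmref{maintheorem1} together with strict geodesic convexity of the objective functional. For each $s\in[0,1]$ the fractional linear function $\ell_s(x)=(x-1)/((1-s)x+s)$ is operator monotone on $(0,\infty)$ and satisfies $\ell_s(1)=0$, $\ell_s'(1)=1$, so $\ell_s\in\mathfrak{L}$. Pushing $\mu$ forward under $s\mapsto\ell_s$ yields a boundedly supported probability measure $\widetilde\mu\in\mathscr{P}(\mathfrak{L}\times\mathbb{P})$, and \thmref{maintheorem1} then produces a unique $X_*\in\mathbb{P}$ satisfying
\[
\int_{[0,1]\times\mathbb{P}}\ell_s\bigl(X_*^{-1/2}AX_*^{-1/2}\bigr)\,d\mu(s,A)=0,
\]
which, upon conjugation by $X_*^{1/2}$, is exactly the generalized Karcher equation of the statement.

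Next I would identify this equation with the Riemannian critical point equation of $F(X):=\int_{[0,1]\times\mathbb{P}}LD^s(X,A)\,d\mu(s,A)$. Using $\det(X\#_{1-s}A)=(\det X)^{s}(\det A)^{1-s}$, so that $\tr\log(X\#_{1-s}A)=s\log\det X+(1-s)\log\det A$,
\[
LD^s(X,A)=\frac{1}{s(1-s)}\bigl\{\log\det[(1-s)A+sX]-s\log\det X-(1-s)\log\det A\bigr\}.
\]
A direct Euclidean gradient computation gives $\nabla_X LD^s(X,A)=(1-s)^{-1}\{[(1-s)A+sX]^{-1}-X^{-1}\}$; converting to the Riemannian gradient via $\mathrm{grad}_X h=X(\nabla_X h)X$ with $g_X(U,V)=\tr(X^{-1}UX^{-1}V)$ then yields
\[
\mathrm{grad}_X LD^s(X,A)=-X^{1/2}\ell_s(X^{-1/2}AX^{-1/2})X^{1/2},
\]
which is easily verified by reduction to the scalar (commuting) case. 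Interchanging gradient with integral, justified by boundedness of $\supp\mu$, identifies the generalized Karcher equation in the statement with $\mathrm{grad}_X F(X)=0$.

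The crux is then strict geodesic convexity of $F$. For fixed $A$, the map $X\mapsto\log\det X$ is geodesically affine along any geodesic $t\mapsto X\#_t Y$, so both $\log\det X$ and $\log\det A$ contribute linearly. The remaining term $X\mapsto\log\det[(1-s)A+sX]$ is strictly geodesically convex: Ando's inequality
\[
(1-s)A+s(X\#_t Y)\le\bigl[(1-s)A+sX\bigr]\#_t\bigl[(1-s)A+sY\bigr]
\]
combined with the operator monotonicity of $\log\det$ and its linearity along weighted geometric means yields convexity, while strictness follows from the strict monotonicity of $\log\det$ together with the equality-case analysis of Ando's inequality, which forces commutativity of the relevant operators. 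Fubini and boundedness of $\supp\mu$ then transfer strict convexity from the integrand to $F$.

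Finally, on the Hadamard manifold $\mathbb{P}$ a strictly geodesically convex function has at most one critical point, and any such point is automatically its unique global minimizer; hence $X_*$ from the first paragraph is the unique minimizer of $F$. The main obstacle I anticipate is organizing the strict convexity argument uniformly in $s\in[0,1]$ so that it passes through integration against $\mu$, including handling the apparent $1/s(1-s)$ singularity at the endpoints, where $LD^s$ extends continuously to the log-determinant Bregman divergences and the gradient formula persists by continuity, and performing the equality analysis in Ando's inequality sharply enough to yield strict, rather than merely non-strict, geodesic convexity.
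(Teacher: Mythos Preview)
Your overall strategy is correct and matches the paper's logical skeleton: compute the Riemannian gradient of $F(X)=\int LD^s(X,A)\,d\mu$, establish geodesic convexity of $F$, and import existence and uniqueness of the critical point from \thmref{maintheorem1}. The gradient formula you obtain agrees with the paper's Theorem~\ref{T:RiemaGrad}, and your appeal to \thmref{maintheorem1} for the Karcher equation is exactly how the paper closes the argument in Section~7 (via Theorem~\ref{uniquegeneralizedkarcher} and Theorem~\ref{karchersatisfied}).

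The one genuine methodological difference is your convexity argument. The paper proves strict geodesic convexity by a direct computation of the Riemannian Hessian (Theorem~\ref{T:RiemHess}), using the integral representation of $f\in\mathfrak{L}$ from Corollary~\ref{C:lambdaintchar} and reducing positivity of $\Hess F(X)(V,V)$ to positivity of Hilbert--Schmidt norms. You instead use joint concavity of the geometric mean, namely $(1-s)A+s(X\#_tY)=(1-s)(A\#_tA)+s(X\#_tY)\le[(1-s)A+sX]\#_t[(1-s)A+sY]$, together with the geodesic linearity of $\log\det$. This is a legitimate and arguably cleaner route to (non-strict) convexity.

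One simplification you should make: once you invoke \thmref{maintheorem1} for uniqueness of the critical point, you do not need \emph{strict} convexity of $F$ at all. Plain geodesic convexity already forces any critical point to be a global minimizer, and since every minimizer of a differentiable function is a critical point, uniqueness of the critical point gives uniqueness of the minimizer. This lets you drop the equality-case analysis of Ando's inequality entirely, which is the part of your outline you correctly flagged as delicate (and which becomes degenerate as $s\to 0,1$). The Hessian route in the paper buys strict (indeed uniform) convexity on bounded sets as a byproduct, which is used for the local existence statement in Theorem~\ref{T:optimize}; your route trades that for a shorter argument but must lean fully on \thmref{maintheorem1} for uniqueness. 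Either way, the endpoint cases $s\in\{0,1\}$ need the separate treatment you already anticipated (AGM for $\tr(X^{-1}A)$, etc.).
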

According to formula \eqref{eq:intr4}, the above theorem exhausts all possible generalized Karcher equations appearing in Theorem~\ref{maintheorem1}. Also the combination of Theorem~\ref{maintheorem1} and Theorem~\ref{maintheorem2}, as a byproduct, yields a new property of operator monotone functions in $\mathfrak{L}$: they are characterized as Riemannian gradients of divergence functions.

Theorem~\ref{maintheorem1} and Theorem~\ref{maintheorem2} are extensions of the results known for the geometric (or Karcher) mean $\Lambda(\omega;\mathbb{A})$ \cite{lawsonlim1,lawsonlim12,limpalfia,moakher}, in which case $f(x)=\log(x)$ in Theorem~\ref{maintheorem1} and in Theorem~\ref{maintheorem2} the measure $d\mu(s,A)=d(\nu\times\sigma)(s,A)$, where $d\nu(s)=ds$ and $d\sigma(A)$ is finitely supported over $A_i\in\mathbb{P}$ with corresponding weight $w_i$ for $1\leq i\leq k$. Also the matrix power means $P_s(\omega,\sigma)$ fit in the picture, in which case $f(x)=\frac{x^s-1}{s}$ in Theorem~\ref{maintheorem1}. Also our theorems extend the log-determinant means of positive matrices in \cite{chebbi}. Hence the unique solutions appearing in Theorem~\ref{maintheorem1} and Theorem~\ref{maintheorem2} not only provide us with new operator means, but unifies these existing approaches which - to our knowledge - is the first generally applicable extension of two-variable Kubo-Ando operator means \cite{kubo} to several (noncommutative) variables and measures. Many nice properties of these generalized operator means are proved here including operator monotonicity, see Theorem~\ref{lambdaprop} below.

The proof of Theorem~\ref{maintheorem2} is based on a contraction principle in the cone $\mathbb{P}$ with a complete metric space structure given by Thompson's part metric
\begin{equation*}
d_\infty(A,B)=\max\left\{\log M(A/B),M(B/A)\right\}
\end{equation*}
for any $A,B\in\mathbb{P}$, where $M(A/B)=\inf\{\alpha:A\leq\alpha B\}$. The argument is an elaborate generalization of the one given in \cite{lawsonlim1} which is motivated by the one in \cite{limpalfia} based on the approximation of the Karcher mean with the matrix power means which are themselves fixed points of one parameter families of strict contractions. Also en route this process, new auxiliary results are proved for the Thompson metric as well which extend various earlier ones in \cite{lawsonlim0}.

The paper is organized as follows. In section 2 we introduce the divergence functions \eqref{eq:div}, calculate their gradients and prove their geodesic convexity along with new integral formulas for members in $\mathfrak{L}$. These results will provide Theorem~\ref{T:optimize} which gives the first half of Theorem~\ref{maintheorem1}. Then in section 3 we prove new integral formulas for operator means in the sense of Kubo-Ando. In section 4 we prove fundamental contraction results for operator means with respect to the Thompson metric and in section 5 we use these to build up an analogous theory of generalized matrix power means called induced operator means in our setting. Then in section 6 we prove Theorem~\ref{maintheorem2} as a combined effort of Theorem~\ref{inducedconv}, Theorem~\ref{karchersatisfied} and Theorem~\ref{uniquegeneralizedkarcher} which also concludes the second half of Theorem~\ref{maintheorem1} and Theorem~\ref{maintheorem2}. In section 7 we briefly discuss the results obtained.

\section{Convex cone of log-determinant divergences over $\mathbb{P}$}
Suppose now that the Hilbert space $E$ is finite dimensional so that $\mathbb{P}$ is isomorphic to the cone of positive definite matrices. In this case one can introduce the so called logarithmic-barrier function
\begin{equation*}
f(X)=-\log\det X=-\tr\log X
\end{equation*}
which is strictly convex on $\mathbb{P}$ \cite{bhatia}. Using this function we introduce the following one parameter family of divergence functions on $\mathbb{P}$ also studied in \cite{chebbi}:
\begin{equation}
LD^s(X,A)=\frac{1}{s(1-s)}\tr\left\{\log[(1-s)A+sX]-\log X\#_{1-s}A\right\}
\end{equation}
for $s\in[0,1]$ and $X,A\in\mathbb{P}$ where the cases $s=0,1$ are defined by a continuity argument and
\begin{equation}\label{eq:geom:1}
X\#_{1-s}A=X^{1/2}\left(X^{-1/2}AX^{-1/2}\right)^{1-s}X^{1/2}=X\left(X^{-1}A\right)^{1-s}
\end{equation}
is the weighted geometric mean. Indeed, one can check that
\begin{eqnarray*}
LD^1(X,A)&=&\tr\left\{X^{-1}A-I-\log(X^{-1}A)\right\}\\
LD^0(X,A)&=&\tr\left\{A^{-1}X-I-\log(A^{-1}X)\right\}
\end{eqnarray*}
by computing the limits $\lim_{s\to 1-}LD^s(X,A)$ and $\lim_{s\to 0+}LD^s(X,A)$. Moreover simple calculation show that
\begin{eqnarray*}
LD^s(X,A)&=&\frac{1}{s(1-s)}\log\frac{\det[(1-s)A+sX]}{\det(X\#_{1-s}A)}\\
&=&\frac{1}{s(1-s)}\tr\left\{\log[(1-s)X^{-1}A+s]-\log (X^{-1}A)^{1-s}\right\}.
\end{eqnarray*}
The formulas defining $LD^s(X,A)$ show that indeed it is a \emph{divergence function}, that is, it satisfies
\begin{itemize}
\item[(I)] $LD^s(X,A)\geq 0$,
\item[(II)] $LD^s(X,A)=0$ if and only if $X=A$,
\end{itemize}
where (I) essentially follows from the operator inequality between the arithmetic and geometric means
$$sX+(1-s)A\geq X\#_{1-s}A$$
and the operator monotonicity of $\log$, see for example \cite{bhatia2}. Property (I) and (II) tells us that $LD^s(X,A)$ is a divergence function on $\mathbb{P}$. Now clearly any convex combinations of the form
$$\int_{[0,1]}LD^s(X,A)d\nu(s)$$
also satisfies (I) and (II) for any probability measure $\nu$ over the closed interval $[0,1]$. Here we can consider arbitrary positive measures over $[0,1]$, but we will see that the normalization condition $\int_{[0,1]}d\nu(s)=1$ does not restrict our further analysis, since we will consider the point where these functions attain their minimum on $\mathbb{P}$.

We are interested in the convexity properties of $\int_{[0,1]}LD^s(X,A)d\nu(s)$. More precisely the geodesic convexity with respect to the Riemannian metric
\begin{equation}\label{Rmetric}
\left\langle X,Y\right\rangle_A=\tr\left\{A^{-1}XA^{-1}Y\right\}
\end{equation}
where $A\in\mathbb{P}$ and $X,Y\in S(E)$, where $S(E)$ in this case is just the vector space of Hermitian matrices. It is well known that $\mathbb{P}$ with the Riemannian metric is a Hadamard manifold i.e., it has nonpositive sectional curvature \cite{bhatia2}. Therefore its geodesics connecting any two points $A,B\in\mathbb{P}$ are unique and are given by the geometric mean $A\#_tB$. The Levi-Civita connection with respect to the Riemannian metric is of the form
\begin{equation}\label{LeviCivita}
\nabla_{X_A}Y_A=DY[A][X_A]-\frac{1}{2}\left(X_AA^{-1}Y_A+Y_AA^{-1}X_A\right)
\end{equation}
see for example \cite{Lar}.

Let us consider the set of operator monotone functions $f:(0,\infty)\mapsto \mathbb{R}$. Operator monotonicity means that for any two Hermitian matrices $X\leq Y$ we have $f(X)\leq f(Y)$. Such functions have strong analytic properties, each such function has a unique integral representation \cite{bhatia} of the form
\begin{equation}\label{integralrepr}
f(x)=\alpha+\beta x+\int_{0}^{\infty}\frac{\lambda}{\lambda^2+1}-\frac{1}{\lambda+x}d\mu(\lambda)\text{,}
\end{equation}
where $\alpha$ is a real number, $\beta\geq 0$ and $\mu$ is a unique positive measure on $[0,\infty)$ such that
\begin{equation}
\int_{0}^{\infty}\frac{1}{\lambda^2+1}d\mu(\lambda)<\infty\text{.}
\end{equation}
Notice that the set of all such functions is a convex cone. A subset of this cone will be of our interest:
$$\mathfrak{L}:=\left\{f:(0,\infty)\mapsto\mathbb{R}, f\text{ is operator monotone }, f(1)=0, f'(1)=1\right\}.$$
The following is a simple corollary of Bendat and Sherman's result \cite{bendatsherman}, see Theorem 3.7 in \cite{hansen3}.

\begin{corollary}\label{C:intLrepr}
The function $f\in\mathfrak{L}$ if and only if there exists a unique probability measure $\nu$ on $[0,1]$ such that
\begin{equation}\label{intreprL2}
f(x)=\int_{[0,1]}\frac{x-1}{(1-s)x+s}d\nu(s).
\end{equation}
\end{corollary}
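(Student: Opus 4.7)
The proof reduces the representation \eqref{integralrepr} of a general operator monotone function on $(0,\infty)$ to a probability measure on the compact interval $[0,1]$, using the two normalization conditions $f(1)=0$ and $f'(1)=1$ to kill the additive constant and fix the total mass. My plan is: start from \eqref{integralrepr}, subtract $f(1)=0$ to eliminate $\alpha$ and collapse the remaining integrand into a single rational kernel; perform the substitution $s=\lambda/(\lambda+1)$ to transport the integral from $[0,\infty)$ to $[0,1)$; absorb the linear term $\beta(x-1)$ as an atom at $s=1$; and finally use $f'(1)=1$ to check that the resulting positive measure on $[0,1]$ has total mass one.

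Subtracting $f(1)=0$ from \eqref{integralrepr} and applying the telescoping identity $\frac{1}{\lambda+1}-\frac{1}{\lambda+x}=\frac{x-1}{(\lambda+1)(\lambda+x)}$ yields
$$f(x)=\beta(x-1)+\int_{0}^{\infty}\frac{x-1}{(\lambda+1)(\lambda+x)}\,d\mu(\lambda).$$
Under the substitution $s=\lambda/(\lambda+1)$ one has $\lambda+1=1/(1-s)$ and $\lambda+x=((1-s)x+s)/(1-s)$, so $((\lambda+1)(\lambda+x))^{-1}=(1-s)^2/((1-s)x+s)$. Pushing $(1-s)^2\,d\mu$ forward under this bijection produces a finite positive measure $\tilde\nu$ on $[0,1)$; adding a Dirac mass $\beta$ at $s=1$ yields the measure $\nu:=\tilde\nu+\beta\delta_1$ on $[0,1]$. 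At $s=1$ the kernel $(x-1)/((1-s)x+s)$ evaluates to $x-1$, so this atom exactly encodes the $\beta(x-1)$ term, and the representation \eqref{intreprL2} holds.

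To verify that $\nu$ is a probability measure I would differentiate \eqref{integralrepr} at $x=1$, obtaining $f'(1)=\beta+\int_0^\infty (\lambda+1)^{-2}\,d\mu(\lambda)$. Since $(\lambda+1)^{-2}=(1-s)^2$ under the substitution, the integral equals the total mass $\tilde\nu([0,1))$, whence $\nu([0,1])=\beta+\tilde\nu([0,1))=f'(1)=1$. Uniqueness of $\nu$ transfers directly from the uniqueness of $(\alpha,\beta,\mu)$ in \eqref{integralrepr} because every step is invertible. For the converse, each kernel $(x-1)/((1-s)x+s)$ is operator monotone on $(0,\infty)$ (for $s<1$ it is a positive affine transform of the operator monotone function $-1/(x+s/(1-s))$, and at $s=1$ it is simply $x-1$), vanishes at $x=1$, and has derivative $1$ at $x=1$, so any probability average over $s$ again lies in $\mathfrak{L}$.

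The only genuinely delicate point is bookkeeping at the endpoints of the substitution: in particular, the linear term $\beta x$ in \eqref{integralrepr} must be treated as a boundary atom at $s=1$, since it cannot be absorbed into the $[0,1)$-integral without a divergent $(1-s)^{-2}$ weight. Once that endpoint contribution is correctly separated out, both directions are routine, and the two scalar normalization constraints on $f$ precisely match the two degrees of freedom (the constant $\alpha$ and the overall mass $\beta+\mu([0,\infty))$) that must be pinned down to produce a probability measure.
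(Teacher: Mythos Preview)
Your argument is correct. The substitution $s=\lambda/(\lambda+1)$, the absorption of the linear term $\beta(x-1)$ as a Dirac mass at $s=1$, and the use of $f'(1)=1$ to normalize the total mass are all handled properly; the converse direction is also fine since the kernels $\l_s$ visibly lie in $\mathfrak{L}$ and $\mathfrak{L}$ is closed under probability averages.

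The paper takes a different, much shorter route: it simply records the statement as a known consequence of Bendat and Sherman \cite{bendatsherman}, pointing to Theorem~3.7 in Hansen \cite{hansen3}, and gives no argument in the body of the text. Your approach instead derives the representation directly from the Nevanlinna-type formula \eqref{integralrepr}, which is more self-contained and makes the mechanism transparent. Interestingly, the paper does carry out essentially your computation---in the passage deriving Corollary~\ref{C:lambdaintchar} from Corollary~\ref{C:intLrepr} via the same change of variables $s=\lambda/(\lambda+1)$, just run in the opposite direction (from $[0,1]$ to $[0,\infty]$ rather than from $[0,\infty)$ to $[0,1]$). So your proof and the paper's surrounding development share the same substance; the paper simply chooses to outsource the base case to the literature and use the substitution only to pass between the two parametrizations.
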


We can also consider
\begin{eqnarray*}
\int_{[0,1]}\frac{x-1}{(1-s)x+s}d\nu(s)&=&\int_{[0,\infty]}\frac{x-1}{\frac{x}{\lambda+1}+\frac{\lambda}{\lambda+1}}d\nu\left(\frac{\lambda}{\lambda+1}\right)\\
&=&\int_{[0,\infty]}\lambda+1-\frac{(\lambda+1)^2}{\lambda+x}d\nu\left(\frac{\lambda}{\lambda+1}\right)
\end{eqnarray*}
with $d\eta(\lambda):=d\nu\left(\frac{\lambda}{\lambda+1}\right)$, so that we obtain the following similar corollary as well:

\begin{corollary}\label{C:lambdaintchar}
The function $f\in\mathfrak{L}$ if and only if
\begin{equation}
f(x)=\int_{[0,\infty]}\lambda+1-\frac{(\lambda+1)^2}{\lambda+x}d\eta(\lambda)
\end{equation}
where $\eta$ is a unique probability measure on the closed, compact interval $[0,\infty]$.
\end{corollary}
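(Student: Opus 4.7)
The plan is to deduce this corollary directly from Corollary~\ref{C:intLrepr} by a change of variable, turning the integral representation over $[0,1]$ into one over the compactified half-line $[0,\infty]$. The computation immediately preceding the statement of the corollary already exhibits the relevant algebraic identity; I would organize the argument around making it rigorous as a measure-theoretic substitution.

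First I would set $\lambda = s/(1-s)$, so that $\phi(s) := s/(1-s)$ is a homeomorphism of the compact interval $[0,1]$ onto the compact interval $[0,\infty]$, with $\phi(0)=0$ and $\phi(1)=\infty$. Under this change of variable one computes $1-s = 1/(\lambda+1)$ and $(1-s)x+s = (x+\lambda)/(\lambda+1)$, so
\begin{equation*}
\frac{x-1}{(1-s)x+s}=\frac{(\lambda+1)(x-1)}{\lambda+x}=(\lambda+1)-\frac{(\lambda+1)^{2}}{\lambda+x}.
\end{equation*}
A small but important check is that the right-hand integrand extends continuously to $\lambda=\infty$: indeed $(\lambda+1)-(\lambda+1)^{2}/(\lambda+x) = (\lambda+1)(x-1)/(\lambda+x)\to x-1$ as $\lambda\to\infty$, matching the value of the left-hand integrand at $s=1$. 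This makes $[0,\infty]$ the natural compactification.

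Next, starting from $f\in\mathfrak{L}$, Corollary~\ref{C:intLrepr} supplies a unique Borel probability measure $\nu$ on $[0,1]$ representing $f$ via \eqref{intreprL2}. I would define $\eta$ to be the pushforward $\phi_{\ast}\nu$, i.e.\ $\eta(B)=\nu(\phi^{-1}(B))$ for Borel $B\subseteq[0,\infty]$. Since $\phi$ is a homeomorphism of compact Hausdorff spaces, $\eta$ is a Borel probability measure on $[0,\infty]$, and the standard change-of-variables formula together with the identity above yields the claimed representation. For the converse, given any probability measure $\eta$ on $[0,\infty]$, the pullback $\nu=(\phi^{-1})_{\ast}\eta$ reverses the computation and places $f$ in the form of Corollary~\ref{C:intLrepr}, proving $f\in\mathfrak{L}$. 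Uniqueness of $\eta$ is inherited from the uniqueness of $\nu$ in Corollary~\ref{C:intLrepr} via the bijection $\nu\leftrightarrow\phi_{\ast}\nu$ between Borel probability measures on $[0,1]$ and on $[0,\infty]$.

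There is no substantive obstacle here; the statement is a reparametrization of Corollary~\ref{C:intLrepr}. The only point requiring attention is the behaviour at the endpoint $\lambda=\infty$, and the continuous extension of the integrand noted above handles it. This also explains why one must work with the compactified interval $[0,\infty]$ rather than $[0,\infty)$: a point mass of $\nu$ at $s=1$ (which corresponds in the original representation to the presence of a linear term $\beta x$ in \eqref{integralrepr}) is transported to a point mass of $\eta$ at $\infty$, and must be accommodated by the measure space.
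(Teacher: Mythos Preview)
Your proposal is correct and is essentially the paper's own derivation: the computation immediately preceding Corollary~\ref{C:lambdaintchar} is exactly the change of variable $\lambda = s/(1-s)$ with $d\eta(\lambda)=d\nu(\lambda/(\lambda+1))$, and you have simply made explicit the pushforward formulation, the endpoint check at $\lambda=\infty$, and the uniqueness argument.
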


The above integral characterizations will be useful tools also in later sections. One can easily see that
\begin{equation}\label{eq:logbounds}
1-x^{-1}\leq \frac{x-1}{(1-s)x+s}\leq x-1
\end{equation}
holds for all $s\in[0,1]$ and $x>0$, by comparing the functions on the disjoint intervals $(0,1)$ and $[1,\infty)$. Hence after integration we conclude that
\begin{equation}\label{eq:logbounds2}
1-x^{-1}\leq f(x)\leq x-1
\end{equation}
holds for all $f\in\mathfrak{L}$. Now we give yet another characterization of the set $\mathfrak{L}$.
\begin{definition}
Let $\mathfrak{D}_A$ denote the simplex of divergence functions on $\mathbb{P}$ of the form
$$F(X)=\int_{[0,1]}LD^s(X,A)d\nu(s)$$
for any probability measure $\nu$ over the closed interval $[0,1]$ and $A\in\mathbb{P}$.
\end{definition}

Note that the above integral exists, since for fixed $X,A\in\mathbb{P}$ the real function $s\mapsto LD^s(X,A)$ is continuous over the compact interval $[0,1]$, hence bounded and strongly measurable so it is integrable by dominated convergence theorem or by Theorem 11.8 in \cite{aliprantis}. The same reasoning ensures the existence of other integrals over $[0,1]$ that are considered in this section.

\begin{theorem}\label{T:RiemaGrad}
Let $F\in\mathfrak{D}_A$ be represented by a probability measure $\nu$ over the closed interval $[0,1]$. Then the Riemannian gradient is
\begin{equation}\label{eq:RiemaGrad}
\begin{split}
\nabla F(X)&=-\int_{[0,1]}X(X^{-1}A-I)\left[(1-s)X^{-1}A+sI\right]^{-1}d\nu(s)\\
&=-Xf(X^{-1}A)=-X^{1/2}f(X^{-1/2}AX^{-1/2})X^{1/2}
\end{split}
\end{equation}
with respect to the Riemannian metric \eqref{Rmetric} and $f\in\mathfrak{L}$ represented by $\nu$ as in \eqref{intreprL2}.
\end{theorem}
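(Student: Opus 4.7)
The plan is to differentiate $LD^s(X,A)$ in $X$ pointwise in $s\in[0,1]$, convert the resulting Euclidean gradient to the Riemannian one via the metric \eqref{Rmetric}, pass the derivative through the integral over $[0,1]$, and finally apply Corollary~\ref{C:intLrepr} to recognise the resulting integrand as $f\in\mathfrak{L}$.

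For the pointwise computation I would first rewrite $LD^s$ in a form that makes the $X$-dependence easier to differentiate. Since $X\#_{1-s}A = X(X^{-1}A)^{1-s}$, multiplicativity of $\det$ gives $\det(X\#_{1-s}A) = (\det X)^s(\det A)^{1-s}$, hence
\begin{equation*}
LD^s(X,A) = \frac{1}{s(1-s)}\bigl\{\log\det[(1-s)A+sX] - s\log\det X - (1-s)\log\det A\bigr\}.
\end{equation*}
Applying the standard identity $d\log\det(M)[H] = \tr(M^{-1}H)$ yields the Fr\'echet derivative
\begin{equation*}
d(LD^s)(X)[H] = \frac{1}{1-s}\tr\Bigl\{\bigl([(1-s)A+sX]^{-1} - X^{-1}\bigr)H\Bigr\},
\end{equation*}
so the Euclidean gradient with respect to the trace inner product is the Hermitian operator $G^s(X) = \frac{1}{1-s}\bigl([(1-s)A+sX]^{-1} - X^{-1}\bigr)$. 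The duality between \eqref{Rmetric} and the trace form then gives the Riemannian gradient as $\nabla(LD^s)(X) = X\,G^s(X)\,X$. Setting $N := X^{-1/2}AX^{-1/2}$, I would rewrite $(1-s)A+sX = X^{1/2}[(1-s)N+sI]X^{1/2}$ and use the elementary identity $I - [(1-s)N+sI] = (1-s)(I-N)$ (together with the fact that $(1-s)N+sI$ commutes with $I-N$) to collapse the expression to
\begin{equation*}
\nabla(LD^s)(X) = -X^{1/2}\l_s(N)X^{1/2}.
\end{equation*}

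Since $X,A\in\mathbb{P}$ are fixed and $s\mapsto X^{1/2}\l_s(N)X^{1/2}$ is continuous on the compact interval $[0,1]$, interchanging differentiation and integration is justified by essentially the same dominated-convergence argument that guarantees the integrability of the defining integral of $F$. This produces
\begin{equation*}
\nabla F(X) = -X^{1/2}\Bigl[\int_{[0,1]}\l_s(N)\,d\nu(s)\Bigr]X^{1/2},
\end{equation*}
and Corollary~\ref{C:intLrepr} identifies the bracketed integral with $f(N)$, giving the third expression in \eqref{eq:RiemaGrad}. The similarity $X^{-1}A = X^{-1/2}N X^{1/2}$ then promotes this to $X^{1/2}f(N)X^{1/2} = Xf(X^{-1}A)$, while substituting $\l_s(x)=(x-1)/((1-s)x+s)$ recovers the first (integral) expression. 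The step deserving the most care is the Euclidean-to-Riemannian conversion together with the rearrangement of the resulting non-commuting conjugations into a clean scalar-function-of-operator form in the Hermitian $N$; once that is in hand, the remainder is essentially bookkeeping with the integral representation furnished by Corollary~\ref{C:intLrepr}.
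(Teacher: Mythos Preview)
Your argument is correct and slightly cleaner than the paper's. The paper works with the trace form
\[
LD^s(X,A)=\frac{1}{s(1-s)}\tr\{\log[(1-s)X^{-1}A+sI]-(1-s)\log(X^{-1}A)\},
\]
differentiates the matrix logarithm via its Fr\'echet derivative, and then invokes the identity $\tr\{D\log(Y)(Z)\}=\tr\{Y^{-1}Z\}$ (justified through the Riesz--Dunford calculus) before simplifying to $-X\l_s(X^{-1}A)$. You instead pass through the determinant, using $\det(X\#_{1-s}A)=(\det X)^s(\det A)^{1-s}$ and Jacobi's formula $d\log\det(M)[H]=\tr(M^{-1}H)$, which avoids the Fr\'echet derivative of $\log$ altogether and gives the Euclidean gradient $G^s(X)$ in one line; the Riemannian gradient is then the conjugation $XG^s(X)X$. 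Both routes land on the same clean formula $-X^{1/2}\l_s(N)X^{1/2}$; your approach buys a more elementary computation, while the paper's stays closer to the $\tr\log$ form used elsewhere. One small point worth mentioning explicitly: your intermediate formula carries a $1/(1-s)$ (and the determinant rewrite implicitly a $1/s$), so the endpoint cases $s\in\{0,1\}$ are handled by continuity of the final expression $-X^{1/2}\l_s(N)X^{1/2}$ in $s$, consistent with how $LD^s$ itself is defined there; the paper notes this as well.
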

\begin{proof}
The Riemannian gradient $\nabla F(X)$ of $F(X)$ is defined by the relation
\begin{equation*}
\left.\frac{\partial}{\partial t}F(X+Vt)\right|_{t=0}=\left\langle \nabla F(X),V\right\rangle_X
\end{equation*}
for any $V$ in the tangent space at $X$. I.e., we compute
\begin{equation*}
\begin{split}
\frac{\partial}{\partial t}LD^s&\left.(X+Vt,A)\right|_{t=0}=\\
=&\frac{1}{s(1-s)}\tr\left\{D\log\left((1-s)X^{-1}A+sI\right)\left(-(1-s)X^{-1}VX^{-1}A\right)\right.\\
&\left.-(1-s)D\log\left(X^{-1}A\right)\left(-X^{-1}VX^{-1}A\right)\right\}
\end{split}
\end{equation*}
where $D\log:\mathbb{P}\times S(E)\to S(E)$ is the Fr\'echet derivative of $\log$ which is linear in the second variable. Moreover since $\log$ is an analytic function on $(0,\infty)$ we have by the linearity, cyclic property of the trace and the Riesz-Dunford functional calculus \cite{weidman}, that for any $X,Y$ we have that
$$\tr\left\{D\log(X)(Y)\right\}=\tr\left\{D\log(X)(I)Y\right\}.$$
Further calculation with this gives that
\begin{equation*}
\begin{split}
\frac{\partial}{\partial t}LD^s&\left.(X+Vt,A)\right|_{t=0}=\\
=&\frac{1}{s}\tr\left\{D\log\left(X^{-1}A\right)\left(I\right)X^{-1}VX^{-1}A\right.\\
&\left.-D\log\left((1-s)X^{-1}A+sI\right)\left(I\right)X^{-1}VX^{-1}A\right\}\\
=&\frac{1}{s}\tr\left\{A^{-1}XX^{-1}VX^{-1}A-\left[(1-s)X^{-1}A+sI\right]^{-1}X^{-1}VX^{-1}A\right\}\\
=&\frac{1}{s}\tr\left\{VX^{-1}-X^{-1}A\left[(1-s)X^{-1}A+sI\right]^{-1}X^{-1}V\right\},
\end{split}
\end{equation*}
hence
$$\nabla F(X)=\frac{1}{s}X\left\{I-\left[(1-s)I+s(X^{-1}A)^{-1}\right]^{-1}\right\}.$$
Now simple calculation shows that
$$\frac{1-\left[1-s+sx^{-1}\right]^{-1}}{s}=-\frac{x-1}{(1-s)x+s},$$
and that the whole calculation also holds even if $s=0$ or $1$. Since Bochner integration is exchangeable with the linear differential operator of Fr\'echet differentiation, we get from the above that \eqref{eq:RiemaGrad} holds.
\end{proof}

\begin{remark}
The proof of Theorem~\ref{T:RiemaGrad} also shows that actually every $f\in\mathfrak{L}$ corresponds to a Riemannian gradient of a divergence function in $\mathfrak{D}_A$.
\end{remark}

We say that a map $f:\mathbb{P}\to\mathbb{R}$ is geodesically convex with respect to the metric \eqref{Rmetric} if it is convex along any geodesic $\gamma:[0,1]\to\mathbb{P}$ i.e., the function $f(\gamma(t))$ is convex:
$$f(\gamma(t))\leq (1-t)f(\gamma(0))+tf(\gamma(1))$$
for $t\in[0,1]$. Similarly $f$ is strictly geodesically convex if additionally
$$f(\gamma(t))<(1-t)f(\gamma(0))+tf(\gamma(1))$$
for $t\in[0,1]$. It is well known that $f$ is geodesically convex if and only if the Riemannian Hessian $\Hess f(X):S(E)\times S(E)\to \mathbb{R}$ is positive semidefinite, similarly $f$ is strictly geodesically convex if and only if $\Hess f(X)(\cdot,\cdot)$ is positive definite \cite{karcher2,papadopoulos}. Moreover if one can show that $\Hess f(X)>m>0$ on some bounded geodesically convex set, then it follows that $f$ is uniformly convex i.e.,
$$f(\gamma(t))\leq (1-t)f(\gamma(0))+tf(\gamma(1))-\frac{m}{2}t(1-t)d(\gamma(0),\gamma(1))^2$$
for all geodesics $\gamma$ lying entirely in the bounded gedesically convex set. In the above inequality $d$ is the Riemannian distance function.

\begin{theorem}\label{T:RiemHess}
Every $F\in\mathfrak{D}_A$ is strictly geodesically convex function with respect to the metric \eqref{Rmetric}.
\end{theorem}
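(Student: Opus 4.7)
The plan is to reduce the Hessian computation to an explicit one at the base point $I$ by combining linearity, congruence invariance, and the affine behavior of $\log\det$ under the weighted geometric mean.

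Since $F(X) = \int_{[0,1]} LD^s(X,A)\,d\nu(s)$ and Fr\'echet differentiation commutes with Bochner integration (as already used in the proof of Theorem~\ref{T:RiemaGrad}), the Riemannian Hessian satisfies $\Hess F(X) = \int_{[0,1]} \Hess LD^s(X,A)\,d\nu(s)$; once the integrand is shown to be continuous and strictly positive in $s \in [0,1]$, integration against any probability measure $\nu$ preserves strict positivity. Hence it suffices to show that $LD^s(\cdot, A)$ is strictly geodesically convex for each fixed $s$. A direct trace calculation gives the congruence identity $LD^s(M^*XM, M^*AM) = LD^s(X, A)$ for every invertible $M$ (because $\log\det(M^*CM) = 2\log|\det M| + \log\det C$ contributes the same constant to both trace--log terms in the definition of $LD^s$, and because the geometric mean is itself congruence invariant). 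Since the metric \eqref{Rmetric} is also congruence invariant, taking $M = X^{-1/2}$ reduces the problem to checking $\Hess LD^s(I, A)(V, V) > 0$ for arbitrary $A \in \mathbb{P}$ and every $V \in S(E) \setminus \{0\}$.

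At $I$ the geodesic in direction $V$ is $\gamma(t) = e^{tV}$, so the Hessian equals $\frac{d^2}{dt^2}\big|_{t=0} LD^s(e^{tV}, A)$. The crucial simplification from \eqref{eq:geom:1} is that $\log\det(e^{tV}\#_{1-s}A) = s\log\det e^{tV} + (1-s)\log\det A = st\,\tr V + (1-s)\log\det A$ is affine in $t$ and therefore drops out of the second derivative. Applying the identity $\frac{d^2}{dt^2}\log\det B(t) = -\tr(B^{-1}B'B^{-1}B') + \tr(B^{-1}B'')$ to $B(t) = (1-s)A + se^{tV}$ and simplifying via the factorization $B(0) - sI = (1-s)A$ collapses the computation to
\[
s(1-s)\,\tr\bigl(B(0)^{-1} V B(0)^{-1} A V\bigr).
\]

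The final step exploits the commutation $[B(0), A] = 0$, which holds because $B(0) = (1-s)A + sI$ is a polynomial in $A$. Thus $B(0)^{-1}A$ is positive and commutes with $B(0)^{-1}$, and the cyclic property of the trace rewrites the expression as the Frobenius norm square $\|B(0)^{-1/2} V (B(0)^{-1}A)^{1/2}\|_F^2$, which is strictly positive precisely when $V \neq 0$. Dividing by the prefactor $s(1-s)$ from the definition of $LD^s$ yields $\Hess LD^s(I, A)(V, V) > 0$ for $s \in (0,1)$; the endpoints $s \in \{0,1\}$ are handled by an analogous but simpler direct computation from the explicit formula $LD^1(X,A) = \tr\{X^{-1}A - I - \log(X^{-1}A)\}$, where along $\gamma(t) = e^{tV}$ the $\log$ term is again affine in $t$ and $\tr(e^{-tV}A)$ has second derivative $\tr(V^2 A) > 0$. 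The one genuine technical point is the commutation $[A, B(0)] = 0$: without it the sign of $\tr(B^{-1} V B^{-1} A V)$ would not be manifest and the factorization into a positive quadratic form in $V$ would require substantially more work.
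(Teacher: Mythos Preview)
Your proof is correct and takes a genuinely different route from the paper's. The paper computes the Hessian at a general point $X$ via the Levi--Civita connection \eqref{LeviCivita} applied to the gradient formula of Theorem~\ref{T:RiemaGrad}, then invokes the integral representation of Corollary~\ref{C:lambdaintchar} to differentiate $f\in\mathfrak{L}$ term by term, and finally changes variables $P=X^{-1/2}AX^{-1/2}$, $H=X^{-1/2}VX^{-1/2}$ to recognize a Frobenius norm. You instead exploit congruence invariance of both $LD^s$ and the metric up front to reduce to $X=I$, where the geodesic $e^{tV}$ makes the second derivative explicit; the determinant identity $\det(X\#_{1-s}A)=(\det X)^s(\det A)^{1-s}$ kills the geometric--mean term outright, and the algebraic identity $B(0)-sI=(1-s)A$ collapses what remains into $\tr(B(0)^{-1}VB(0)^{-1}AV)$, whose positivity follows from the commutation $[A,B(0)]=0$. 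Your argument is more elementary in that it bypasses the Levi--Civita connection, the prior gradient computation, and the L\"owner integral representation entirely; the paper's route, on the other hand, makes the dependence on the representing measure of $f\in\mathfrak{L}$ explicit, which feeds directly into the quantitative lower bound of Corollary~\ref{C:strictlyconvex}. Note also that your formula $\tr(B(0)^{-1}VB(0)^{-1}AV)$ extends continuously to $s\in\{0,1\}$ (giving $\tr(VAV)$ and $\tr(A^{-1}V^2)$ respectively), so the separate endpoint computation is in fact unnecessary.
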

\begin{proof}
The idea is to show that the Riemannian Hessian of $F$ is positive definite i.e., $\Hess F(X)(V,V)>0$ for any nonzero $V\in S(E)$ in the tangent space at $X$. The Riemannian Hessian of $F$ is defined by
$$\Hess F(X)(V,W)=\left\langle \nabla_V(\nabla F(X)),W\right\rangle_X$$
as a bilinear form acting on the tangent space at $X$, where $\nabla_V$ is the covariant derivative given by the Levi-Civita connection \eqref{LeviCivita}, see for example \cite{spivak}. By Theorem~\ref{T:RiemaGrad} we have that $\nabla F(X)=-Xf(X^{-1}A)$ with an $f\in\mathfrak{L}$, so we have
\begin{eqnarray*}
\nabla_V(\nabla F(X))&=&-Vf(X^{-1}A)+XDf(X^{-1}A)(X^{-1}VX^{-1}A)\\
&&+\frac{1}{2}\left[Xf(X^{-1}A)X^{-1}V+Vf(X^{-1}A)\right].
\end{eqnarray*}
Hence we have
\begin{eqnarray*}
\Hess F(X)(V,V)&=&\left\langle \nabla_V(\nabla F(X)),V\right\rangle_X\\
&=&\tr\left\{X^{-1}\left\{-Vf(X^{-1}A)+XDf(X^{-1}A)(X^{-1}VX^{-1}A)\right.\right.\\
&&\left.\left.+\frac{1}{2}\left[Xf(X^{-1}A)X^{-1}V+Vf(X^{-1}A)\right]\right\}X^{-1}V\right\}.
\end{eqnarray*}
By the cyclic property of the trace the above is equivalent to
\begin{eqnarray*}
\Hess F(X)(V,V)&=&\tr\left\{Df(X^{-1}A)(X^{-1}VX^{-1}A)X^{-1}V\right\}.
\end{eqnarray*}
Now we make use of the integral representation for $f$ given in Corollary~\ref{C:lambdaintchar} in the form
$$f(x)=\int_{[0,\infty]}\lambda+1-\frac{(\lambda+1)^2}{\lambda+x}d\eta(\lambda).$$
By exchanging the Fr\'echet derivative with the integral we obtain that
\begin{equation*}
\begin{split}
Df(X^{-1}A)&(X^{-1}VX^{-1}A)=\nu(\{\infty\})X^{-1}VX^{-1}A\\
&+\int_{[0,\infty)}(\lambda I+X^{-1}A)^{-1}X^{-1}VX^{-1}A(\lambda I+X^{-1}A)^{-1}d\eta(\lambda),
\end{split}
\end{equation*}
where we separated the term corresponding to $\{\infty\}$. Hence by the linearity of the trace we have
\begin{equation*}
\begin{split}
&\Hess F(X)(V,V)=\nu(\{\infty\})\tr\left\{X^{-1}VX^{-1}AX^{-1}V\right\}\\
&+\int_{[0,\infty)}\tr\left\{(\lambda I+X^{-1}A)^{-1}X^{-1}VX^{-1}A(\lambda I+X^{-1}A)^{-1}X^{-1}V\right\}d\eta(\lambda).
\end{split}
\end{equation*}
Hence it suffices to prove now that the expressions
\begin{eqnarray*}
c(V)&:=&\tr\left\{X^{-1}VX^{-1}AX^{-1}V\right\}>0,\\
p(V)&:=&\tr\left\{(\lambda I+X^{-1}A)^{-1}X^{-1}VX^{-1}A(\lambda I+X^{-1}A)^{-1}X^{-1}V\right\}>0
\end{eqnarray*}
for all nonzero $V\in S(E)$, $X,A\in\mathbb{P}$ and $\lambda \geq 0$. Using the notation $P=X^{-1/2}AX^{-1/2}$, $H=X^{-1/2}VX^{-1/2}$, the cyclic property of the trace and that $(\lambda I+P)^{-1/2}$ commutes with $P^{1/2}$, we get that
\begin{eqnarray*}
p(V)&=&\tr\left\{(\lambda I+P)^{-1}HP(\lambda I+P)^{-1}H\right\}\\
&=&\tr\left\{(\lambda I+P)^{-1/2}H(\lambda I+P)^{-1/2}P^{1/2}\right.\\
&&\left.P^{1/2}(\lambda I+P)^{-1/2}H(\lambda I+P)^{-1/2}\right\}\\
&=&\left\|(\lambda I+P)^{-1/2}H(\lambda I+P)^{-1/2}P^{1/2}\right\|_2^2,\\
c(V)&=&\tr\left\{HPH\right\}\\
&=&\tr\left\{HP^{1/2}P^{1/2}H\right\}\\
&=&\left\|HP^{1/2}\right\|_2^2
\end{eqnarray*}
where $\|T\|_2=\sqrt{tr\left\{TT^*\right\}}$ is the Frobenius or Hilbert-Schmidt norm of $T\in\End(E)$. Since $(\lambda I+P)^{-1/2}$ and $P^{1/2}$ are nonsingular, moreover $H$ is nonzero, it follows that
\begin{eqnarray*}
\left\|(\lambda I+P)^{-1/2}H(\lambda I+P)^{-1/2}P^{1/2}\right\|_2^2&>&0,\\
\left\|HP^{1/2}\right\|_2^2&>&0
\end{eqnarray*}
hence $p(V)>0$ and $c(V)>0$ which yields $\Hess F(X)(V,V)>0$ since $\eta$ is a positive measure.
\end{proof}

Theorem~\ref{T:RiemHess} has many important consequences as we will see shortly. Before that let us investigate under what circumstances is the Hessian bounded away from zero in the previous Theorem~\ref{T:RiemHess}. By simple arguments one can see that
\begin{eqnarray*}
\left\|(\lambda I+P)^{-1/2}H(\lambda I+P)^{-1/2}P^{1/2}\right\|_2^2&>&\min\{1/(\lambda+p_i)\}\min\{p_i^{1/2}\}\left\|H\right\|_2^2\\
\left\|HP^{1/2}\right\|_2^2&>&\min\{p_i^{1/2}\}\left\|H\right\|_2^2
\end{eqnarray*}
where $p_i$ are the eigenvalues of $P$. If $\lambda$ is finite, and eigenvalues of $A,X$ are bounded i.e., $X,A$ are in a bounded metric ball and the measure $\eta$ is supported over some closed interval $[a,b]\subset [0,\infty]$, then the above bounds on the right are strictly greater then $0$ for unit length directions $V$, moreover can be chosen uniformly over the bounded set and the interval $[a,b]$. This shows the following.

\begin{corollary}\label{C:strictlyconvex}
The Hessian $\Hess F(X)(\cdot,\cdot)$ in Theorem~\ref{T:RiemHess} has eigenvalues greater than some $c>0$ on geodescially bounded sets i.e., every $F\in\mathfrak{D}_A$ is a uniformly geodesically convex function with respect to the metric \eqref{Rmetric} on bounded metric balls.
\end{corollary}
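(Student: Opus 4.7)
The plan is to sharpen the positivity estimates from the end of the proof of Theorem~\ref{T:RiemHess} into bounds that are uniform in $X$ ranging over a bounded metric ball. Writing $P = X^{-1/2}AX^{-1/2}$ and $H = X^{-1/2}VX^{-1/2}$, the identity $\|V\|_X^2 = \|H\|_2^2$ reduces the task to producing a constant $c > 0$, depending only on the ball, such that $\Hess F(X)(V,V) \geq c\|H\|_2^2$ for all $X$ in the ball and all $V$.

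I would first invoke finite dimensionality to control the spectrum of $P$: closed metric balls in $\mathbb{P}$ are compact, so the continuous map $X \mapsto X^{-1/2}AX^{-1/2}$ places the eigenvalues of $P$ in a uniform compact interval $[a_0,b_0]\subset(0,\infty)$ for all $X$ in a fixed ball $B_R(X_0)$. Next, diagonalizing $P$ and writing $H = (h_{kl})$ in its spectral basis, one gets the elementary pointwise lower bounds
\begin{equation*}
c(V) = \sum_{k,l} p_k|h_{kl}|^2 \geq a_0\|H\|_2^2, \qquad p_\lambda(V) = \sum_{k,l}\frac{p_l|h_{kl}|^2}{(\lambda+p_k)(\lambda+p_l)} \geq \frac{a_0}{(\lambda+b_0)^2}\|H\|_2^2,
\end{equation*}
both uniform in $X \in B_R(X_0)$. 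Substituting into the integral decomposition of $\Hess F(X)(V,V)$ obtained in the proof of Theorem~\ref{T:RiemHess} then yields
\begin{equation*}
\Hess F(X)(V,V) \geq a_0\left[\,\eta(\{\infty\}) + \int_{[0,\infty)}\frac{d\eta(\lambda)}{(\lambda+b_0)^2}\,\right]\|H\|_2^2.
\end{equation*}

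The bracketed constant is strictly positive: since $\eta$ is a probability measure on $[0,\infty]$, at least one of $\eta(\{\infty\})$ or $\eta([0,\infty))$ is positive, and the integrand $(\lambda+b_0)^{-2}$ is strictly positive on $[0,\infty)$. This provides the desired uniform $c>0$; the standard Riemannian convexity criterion quoted in the paragraph preceding Theorem~\ref{T:RiemHess} then upgrades the Hessian lower bound to uniform geodesic convexity of $F$ on $B_R(X_0)$ (bounded metric balls in the Hadamard manifold $\mathbb{P}$ are themselves geodesically convex, so all connecting geodesics remain in the ball).

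The main obstacle I anticipate is the possibility that $\eta$ places most of its mass near $\infty$, where $(\lambda+b_0)^{-2}$ becomes small. The separation of the atom $\eta(\{\infty\})$ in the proof of Theorem~\ref{T:RiemHess} is exactly what rules out degeneracy here, since the corresponding contribution is controlled by $c(V)\geq a_0\|H\|_2^2$ independently of $\lambda$. Any nondegenerate combination of the two pieces suffices to produce a uniform constant, possibly small, which is all that Corollary~\ref{C:strictlyconvex} requires.
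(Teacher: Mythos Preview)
Your proof is correct and follows essentially the same approach as the paper: both arguments bound the eigenvalues of $P=X^{-1/2}AX^{-1/2}$ uniformly using the boundedness of the metric ball, then estimate the terms $c(V)$ and $p_\lambda(V)$ from below by constants times $\|H\|_2^2$ and integrate against $\eta$. Your treatment is in fact more explicit than the paper's---particularly in separating the atom $\eta(\{\infty\})$ to handle the potential degeneracy of $(\lambda+b_0)^{-2}$ as $\lambda\to\infty$, which the paper only addresses implicitly.
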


Now Theorem~\ref{T:RiemHess} and Corollary~\ref{C:strictlyconvex} implies the following result. Let $\mathscr{P}(\mathbb{P})$ denote the set of all Radon probability measures over $\mathbb{P}$ with bounded support in $\mathbb{P}$. The integrals in the following theorem exist by the continuity and boundedness of the integrands over the support of the measures, since continuity ensures strong measurability, while boundedness (which follows from continuity over the compact support of the measures) ensures integrability using the dominated convergence theorem or Theorem 11.8 in \cite{aliprantis}.

\begin{theorem}\label{T:optimize}
Let $\sigma\in\mathscr{P}(\mathbb{P})$ and let $\nu$ be a probability measure on $[0,1]$. Let $C\subseteq\mathbb{P}$ be a closed, bounded geodesically convex set. Then 
\begin{itemize}
\item[I.] The solution of the optimization problem
\begin{equation}
\min_{X\in C}\int_{\mathbb{P}}\int_{[0,1]}LD^s(X,A)d\nu(s)d\sigma(A)
\end{equation}
exists and is unique in $C$.
\item[II.] If the global optimization problem
\begin{equation}\label{eq:optimize1}
\min_{X\in \mathbb{P}}\int_{\mathbb{P}}\int_{[0,1]}LD^s(X,A)d\nu(s)d\sigma(A)
\end{equation}
has a solution, then it is unique and satisfies the nonlinear operator equation
\begin{equation}\label{eq:optimize2}
\int_{\mathbb{P}}X^{1/2}f(X^{-1/2}AX^{-1/2})X^{1/2}d\sigma(A)=0
\end{equation}
where $f(x)=\int_{[0,1]}\frac{x-1}{(1-s)x+s}d\nu(s)$ and $f\in\mathfrak{L}$.
\end{itemize}
\end{theorem}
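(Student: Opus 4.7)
The plan is to set $G(X):=\int_{\mathbb{P}}\int_{[0,1]}LD^s(X,A)d\nu(s)d\sigma(A)$ and reduce both parts to the strict geodesic convexity results (Theorem~\ref{T:RiemHess} and Corollary~\ref{C:strictlyconvex}) together with the gradient formula (Theorem~\ref{T:RiemaGrad}) already in hand. I would first record that for each fixed $A$ the inner integral $F_A(X):=\int_{[0,1]}LD^s(X,A)d\nu(s)$ belongs to $\mathfrak{D}_A$ and is thus continuous and strictly geodesically convex. Then I would check that the outer $\sigma$-integration preserves these properties on $\mathbb{P}$: continuity follows from dominated convergence, using the continuity of $(X,A)\mapsto F_A(X)$ and the compactness of $\supp\sigma$; geodesic convexity and its strict version pass to the integral by integrating the pointwise inequality $F_A(\gamma(t))<(1-t)F_A(\gamma(0))+tF_A(\gamma(1))$ against the probability measure $\sigma$ (the pointwise gap is a positive continuous function of $A$ on $\supp\sigma$, so integrates to something positive).

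For Part I the key structural fact is that $\mathbb{P}$ with the Riemannian metric \eqref{Rmetric} is a finite-dimensional complete Hadamard manifold, so by Hopf-Rinow any closed bounded subset of $\mathbb{P}$, and in particular $C$, is compact. Continuity of $G$ then yields existence of a minimizer on $C$. For uniqueness, if two distinct minimizers $X_1,X_2\in C$ existed, the geodesic convexity of $C$ would force the connecting geodesic $\gamma$ to remain in $C$, and strict geodesic convexity of $G$ would produce $G(\gamma(t))<G(X_1)=G(X_2)$ for $t\in(0,1)$, a contradiction.

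For Part II, assuming a global minimizer $X_*\in\mathbb{P}$ of \eqref{eq:optimize1} exists, uniqueness follows by repeating the previous argument on all of $\mathbb{P}$, which is itself geodesically convex. Since $X_*$ is a critical point, the Riemannian gradient $\nabla G(X_*)$ must vanish. To identify this gradient I would interchange the Fr\'echet differentiation of Theorem~\ref{T:RiemaGrad} with the Bochner integral in $A$; this is justified because $\nabla F_A(X)=-X^{1/2}f(X^{-1/2}AX^{-1/2})X^{1/2}$ is jointly continuous in $(X,A)$ and uniformly bounded on the compact $\supp\sigma$, exactly in the manner already used to define integrals over $\supp\sigma$ in the excerpt. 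Combining with Theorem~\ref{T:RiemaGrad} applied to $F_A$ and the probability measure $\nu$, one then obtains
\begin{equation*}
0=\nabla G(X_*)=-\int_{\mathbb{P}}X_*^{1/2}f(X_*^{-1/2}AX_*^{-1/2})X_*^{1/2}d\sigma(A),
\end{equation*}
with $f\in\mathfrak{L}$ attached to $\nu$ via \eqref{intreprL2}, which is precisely \eqref{eq:optimize2}.

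The step requiring most care is showing that strict geodesic convexity survives the $\sigma$-integration, since strictness is a delicate property when passed through integrals; however, it suffices to note that for $X_1\neq X_2$ the continuous function $A\mapsto (1-t)F_A(X_1)+tF_A(X_2)-F_A(\gamma(t))$ is strictly positive everywhere on $\mathbb{P}$ by Theorem~\ref{T:RiemHess}, so its $\sigma$-integral is strictly positive for any probability measure $\sigma$. Beyond this, the only other potentially delicate step is the gradient-integral interchange, which is handled by the same continuity and boundedness argument already employed in the proofs of Theorem~\ref{T:RiemaGrad} and Theorem~\ref{T:RiemHess}. No genuinely new obstruction should appear.
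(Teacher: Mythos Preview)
Your proposal is correct and follows essentially the same strategy as the paper: reduce Part~II to Theorem~\ref{T:RiemHess} (strict geodesic convexity) for uniqueness and Theorem~\ref{T:RiemaGrad} for the gradient equation, and reduce Part~I to convexity together with a standard existence principle. The only genuine difference is in how existence on $C$ is obtained. The paper invokes Corollary~\ref{C:strictlyconvex} to get \emph{uniform} geodesic convexity on bounded sets and then appeals to a general metric-space result (Sturm's Theorem~1.7) for strongly convex functions bounded from below; you instead use Hopf--Rinow to conclude that $C$ is compact and then just take a continuous minimum. Your route is more elementary and perfectly adequate in the finite-dimensional setting at hand, while the paper's uniform-convexity route is the one that would survive in more general nonpositively curved metric spaces where Hopf--Rinow compactness is unavailable. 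Your added remarks on why strict convexity and the gradient formula pass through the $\sigma$-integration are details the paper leaves implicit.
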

\begin{proof}
The uniqueness of the solution of the global problem follows from the strict convexity of $LD^s(X,A)$ in Theorem~\ref{T:RiemHess}. Moreover if the minimizer exists then it is a critical point of the gradient of the objective function which is explicitly calculated in Theorem~\ref{T:RiemaGrad}.

The existence of the unique solution of the local problem follows from the continuity and uniform convexity of $LD^s(X,A)$ in bounded geodesically convex sets according to Corollary~\ref{C:strictlyconvex} and a standard optimization argument for strongly convex functions bounded from below, see for example Theorem 1.7 in \cite{sturm}.
\end{proof}

Theorem~\ref{T:optimize} introduces the important notions that will be investigated in the remaining sections of the paper. The gradient equation \eqref{eq:optimize2} is of fundamental importance in this paper.
\begin{definition}[Generalized Karcher equation]\label{D:genKarcherequ}
Let $f\in\mathfrak{L}$ and $\sigma\in\mathscr{P}(\mathbb{P})$. Then the nonlinear operator equation
\begin{equation*}
\int_{\mathbb{P}}X^{1/2}f(X^{-1/2}AX^{-1/2})X^{1/2}d\sigma(A)=0
\end{equation*}
for $X\in\mathbb{P}$ is called the generalized Karcher equation for the function $f$.
\end{definition}

Theorem~\ref{T:optimize} is just not strong enough for our purposes to show the existence of the global minimizer
\begin{equation*}
\argmin_{X\in \mathbb{P}}\int_{\mathbb{P}}\int_{[0,1]}LD^s(X,A)d\nu(s)d\sigma(A),
\end{equation*}
nor to show additional important properties satisfied by the unique solutions. The rest of this paper will be devoted to this existence problem, along with establishing the properties enjoyed by the solution, which are finally concluded in Theorem~\ref{finalconclusion}. We will study this problem indirectly by instead looking at the generalized Karcher equation
\begin{equation*}
\int_{\mathbb{P}}X^{1/2}f(X^{-1/2}AX^{-1/2})X^{1/2}d\sigma(A)=0
\end{equation*}
corresponding to the minimization problem and show the existence of the unique solution of it in the more general setting when $\mathbb{P}$ is possibly over an infinite dimensional Hilbert space $E$. This approach requires us to build a widely applicable machinery which relies on principles of nonlinear contraction semigroups in the ordered cone $\mathbb{P}$.

From now on we abandon the finite dimensional case and let $E$ be infinite dimensional so that $\mathbb{P}$ is the full cone of positive definite operators.

\section{Operator means and representations}
Let us recall the definition of operator (or matrix) mean from \cite{kubo}:
\begin{definition}\label{symmean}
A two-variable function \textit{M}:
$ \mathbb{P}\times \mathbb{P}\mapsto \mathbb{P}$ is called
a matrix or operator mean if
\begin{enumerate}
\renewcommand{\labelenumi}{(\roman{enumi})}
\item $M(I,I)=I$ where $I$ denotes the identity,
\item if $A\leq A'$ and $B\leq B'$, then $M(A,B)\leq M(A',B')$,
\item $CM(A,B)C\leq M(CAC,CBC)$ for all Hermitian $C$,
\item if $A_n\downarrow A$ and $B_n\downarrow B$ then $M(A_n,B_n)\downarrow M(A,B)$,
\end{enumerate}
where $\downarrow$ denotes the convergence in the strong operator topology of a monotone decreasing net.
\end{definition}
In property (ii), (iii), (iv) the partial order being used is the positive definite order i.e., $A\leq B$ if and only if $B-A$ is positive semidefinite. An important consequence of these properties is \cite{kubo} that every operator mean can be uniquely represented by a positive, normalized, operator monotone function $f(t)$ in the following form
\begin{equation}\label{mean}
M(A,B)=A^{1/2}f\left(A^{-1/2}BA^{-1/2}\right)A^{1/2}\text{.}
\end{equation}
This unique $f(t)$ is said to be the representing function of the operator mean $M(A,B)$. So actually operator means are in one-to-one correspondence with normalized operator monotone functions, the above characterization provides an order-isomorphism between them. Normalization means that $f(1)=1$. For symmetric means i.e., for means $M(A,B)=M(B,A)$, we have $f(t)=tf(1/t)$ which implies that $f'(1)=1/2$. Operator monotone functions have strong continuity properties, namely all of them are analytic functions and can be analytically continued to the upper complex half-plane.

The set of all operator means is denoted by $\mathfrak{M}$ i.e.,
\begin{equation*}
\begin{split}
\mathfrak{M}=\{&M(\cdot,\cdot):M(A,B)=A^{1/2}f(A^{-1/2}BA^{-1/2})A^{1/2}, f:(0,\infty)\mapsto(0,\infty)\\
&\text{ operator monotone }, f(1)=1\}.
\end{split}
\end{equation*}
Similarly $\mathfrak{m}=\{f(x):f$ is a representing function of an $M\in\mathfrak{M}\}$. We will use the notation $\mathfrak{m}(t)$ to denote the set of all operator monotone functions $f$ on $(0,\infty)$ such that $f(x)>0$ for all $x\in(0,\infty)$ and $f(1)=1, f'(1)=t$. We can find the minimal and maximal elements of $\mathfrak{m}(t)$ for all $t\in(0,1)$ easily.
\begin{lemma}\label{maxmininp}
For all $f(x)\in\mathfrak{m}(t)$ we have
\begin{equation}
\left((1-t)+tx^{-1}\right)^{-1}\leq f(x)\leq (1-t)+tx.
\end{equation}
\end{lemma}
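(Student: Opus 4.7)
The plan is to prove the upper and lower bounds separately, reducing the lower bound to the upper one via the Kubo--Ando duality $f\mapsto f^{*}$ with $f^{*}(x):=1/f(1/x)$.

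For the upper bound, I would invoke the classical fact that every positive operator monotone function on $(0,\infty)$ is concave as a real-valued function. This is immediate from L\"owner's integral representation \eqref{integralrepr} recalled earlier: each integrand $\tfrac{\lambda}{1+\lambda^{2}}-\tfrac{1}{\lambda+x}$ is concave in $x$, the linear part $\alpha+\beta x$ with $\beta\geq 0$ is affine, and concavity is preserved under positive integration. Combined with $f(1)=1$ and $f'(1)=t$, the tangent-line inequality at $x=1$ for a concave function gives $f(x)\leq 1+t(x-1)=(1-t)+tx$, which is the right-hand bound.

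For the lower bound, I would show that $f^{*}\in\mathfrak{m}(t)$ and then apply the upper bound to $f^{*}$. Positivity and the normalization $f^{*}(1)=1$ are immediate from $f>0$ and $f(1)=1$, while a one-line chain-rule computation yields $(f^{*})'(1)=f'(1)/f(1)^{2}=t$. The substantive point is that $f^{*}$ is again operator monotone: this is precisely the transpose operation of Kubo--Ando theory \cite{kubo}, corresponding on the operator-mean side to $M(A,B)\mapsto M(A^{-1},B^{-1})^{-1}$, and it preserves the cone of representing functions. Once $f^{*}\in\mathfrak{m}(t)$ is established, the upper bound applied to $f^{*}$ reads $1/f(1/x)\leq(1-t)+tx$, and the substitution $y=1/x$ followed by inversion produces $f(y)\geq\bigl((1-t)+ty^{-1}\bigr)^{-1}$.

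The only real obstacle is justifying operator monotonicity of the transpose $f^{*}$; everything else is the tangent-line inequality for concave functions plus elementary algebra. If a self-contained argument is preferred over a direct appeal to \cite{kubo}, I would instead derive the L\"owner integral representation of $f^{*}$ from that of $f$ by the change of variable $\lambda\mapsto 1/\lambda$ in the representing measure, which produces an integral of the same form and so certifies operator monotonicity directly.
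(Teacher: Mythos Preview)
Your proof is correct and follows essentially the same strategy as the paper: the upper bound via concavity of operator monotone functions together with the tangent-line inequality at $x=1$, and the lower bound by applying the upper bound to the dual $f^{*}(x)=1/f(1/x)\in\mathfrak{m}(t)$. One terminological slip worth fixing: the operation $f\mapsto f^{*}$ you describe (corresponding on the mean side to $M(A,B)\mapsto M(A^{-1},B^{-1})^{-1}$) is the Kubo--Ando \emph{adjoint}, not the transpose---in \cite{kubo} and in this paper the transpose is $f^{tr}(x)=xf(1/x)$, corresponding to $M(A,B)\mapsto M(B,A)$.
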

\begin{proof}
Since every operator monotone function is operator concave, see Chapter V \cite{bhatia}, therefore we must have $f(x)\leq (1-t)+tx$ by concavity and the normalization conditions on elements of $\mathfrak{m}(t)$. Since the map $x^{-1}$ is order reversing on positive matrices, we have that if $f(x)\in\mathfrak{m}(t)$ then also $f(x^{-1})^{-1}\in\mathfrak{m}(t)$, which leads to the lower bound.
\end{proof}

Since $\left((1-t)+tx^{-1}\right)^{-1}$ and $(1-t)+tx$ are operator monotone, they are the minimal and maximal elements of $\mathfrak{m}(t)$ respectively, and also they are the representing functions of the weighted harmonic and arithmetic means. In general by the previous Lemma~\ref{maxmininp} the inequality
\begin{equation}\label{eq:maxmininp}
\left[(1-t)A^{-1}+tB^{-1}\right]^{-1}\leq M(A,B)\leq (1-t)A+tB.
\end{equation}
is true for all $M(A,B)$ operator means with representing operator monotone function $f$ for which we have $f'(1)=t$. In this sense $\mathfrak{m}(t)$ characterizes weighted operator means. If we take this as the definition of weighted operator means, one can compare it with the definition of weighted matrix means given in \cite{palfia3}.

We will make use of the following characterization due to Hansen using the notation
$$h_s(x):=[(1-s)+sx^{-1}]^{-1}$$
for $s\in[0,1]$.

\begin{proposition}[Theorem 4.9 in \cite{hansen3}]\label{harmonicreprprop}
Let $f\in\mathfrak{m}$. Then
\begin{equation}\label{harmonicrepr}
f(x)=\int_{[0,1]}[(1-s)+sx^{-1}]^{-1}d\nu(s)
\end{equation}
where $\nu$ is a probability measure over the closed interval $[0,1]$.
\end{proposition}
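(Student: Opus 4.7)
\emph{Proof plan.} The statement is essentially a reformulation of the classical Loewner integral representation \eqref{integralrepr} specialized to functions in $\mathfrak{m}$, so the strategy is to derive it by a change of variables. I will check both directions of the claimed equivalence. For the easy direction, each kernel $h_s(x) = [(1-s) + s x^{-1}]^{-1} = x/[(1-s)x+s]$ is itself a weighted harmonic mean of the constant function $1$ and the identity $x$, so $h_s \in \mathfrak{m}$ for every $s \in [0,1]$, and $h_s(1)=1$. Hence any probability average $\int h_s \, d\nu(s)$ is positive, operator monotone (operator monotonicity is preserved under positive linear combinations and monotone limits of the Riemann sums), and normalized at $x=1$, so it belongs to $\mathfrak{m}$.

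For the forward direction, I start from the Loewner representation \eqref{integralrepr} applied to $f \in \mathfrak{m}$. The key observation is that since $f$ extends analytically from $(0,\infty)$ and maps it into $(0,\infty)$, its measure in \eqref{integralrepr} is supported on $[0,\infty)$ and one can regroup the constant term and the linear term into the endpoints of the parameter interval. Concretely, by a standard partial fractions manipulation any such $f$ admits the equivalent Stieltjes form
\begin{equation*}
f(x) = \int_{[0,\infty]} \frac{x(1+t)}{x+t}\, d\mu(t),
\end{equation*}
where $\mu$ is a finite positive measure on the one-point compactification $[0,\infty]$; the point mass at $t=0$ reproduces the constant $\alpha$ (since the integrand equals $1$ at $t=0$) and the point mass at $t=\infty$ reproduces the linear term $\beta x$ (since the integrand equals $x$ at $t=\infty$). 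Positivity of $f$ on $(0,\infty)$ guarantees the sign of these boundary contributions and hence that $\mu \geq 0$.

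Next, perform the change of variables $s = t/(1+t)$, which is a homeomorphism $[0,\infty] \to [0,1]$ with inverse $t = s/(1-s)$ and $1+t = 1/(1-s)$. A direct computation yields
\begin{equation*}
\frac{x(1+t)}{x+t} = \frac{x/(1-s)}{x + s/(1-s)} = \frac{x}{(1-s)x + s} = \bigl[(1-s) + s x^{-1}\bigr]^{-1} = h_s(x),
\end{equation*}
so letting $\nu$ be the pushforward of $\mu$ under $t \mapsto t/(1+t)$ gives the integral representation \eqref{harmonicrepr}. The normalization $f(1)=1$, together with $h_s(1)=1$ for all $s$, then forces $\nu([0,1])=1$, so $\nu$ is a probability measure. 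Uniqueness of $\nu$ is inherited from the uniqueness of $\mu$ in \eqref{integralrepr}, transported across the bijection $s = t/(1+t)$.

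The one genuinely technical point, which I would expect to be the main obstacle, is the passage from the two-parameter Loewner form \eqref{integralrepr} to the Stieltjes-type form on $[0,\infty]$ with a single finite positive measure. This requires using the positivity of $f$ on $(0,\infty)$ (equivalently, the boundary behavior of the analytic continuation of $f$ on the cut plane $\mathbb{C}\setminus(-\infty,0]$) to show that the measure $\mu$ extends continuously to the compactified interval and that the constants $\alpha,\beta$ arise as the boundary point masses. Once this is in place, the rest of the argument is a direct substitution.
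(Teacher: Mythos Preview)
Your approach is correct in outline, and the change of variables $s = t/(1+t)$ together with the normalization argument are exactly right. Note, however, that the paper does not supply its own proof of this proposition: it is quoted directly as Theorem~4.9 from Hansen \cite{hansen3}, so there is no in-paper argument to compare against. What you have sketched is essentially Hansen's argument. The one step you flag as ``genuinely technical''---passing from the Loewner form \eqref{integralrepr} to the single-measure Stieltjes form $f(x)=\int_{[0,\infty]}\frac{x(1+t)}{x+t}\,d\mu(t)$ with a finite positive $\mu$---is indeed the substantive part and does require the hypothesis $f>0$ on $(0,\infty)$ (not just operator monotonicity); this is where Hansen's paper does the real work, and your sketch correctly isolates it without fully carrying it out.
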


There are two degenerate cases of operator means induced by a $\nu$ which are supported only over the single points ${0}$ or ${1}$. One of them is the left trivial mean
\begin{equation*}
l(x)=1
\end{equation*}
with represented operator mean $M(A,B)=A$ and the right trivial mean
\begin{equation*}
r(x)=x
\end{equation*}
with represented operator mean $M(A,B)=B$.

\begin{proposition}\label{positivemapthm}
Let $\Phi$ be a positive unital linear map and $M\in\mathfrak{M}$. Then
\begin{equation*}
\Phi(M(A,B))\leq M(\Phi(A),\Phi(B))
\end{equation*}
for $A,B>0$.
\end{proposition}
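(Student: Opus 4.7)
The plan is to reduce the two-variable inequality to a one-variable Jensen-type inequality via an auxiliary positive unital linear map. I would introduce $\Psi$ defined by
\begin{equation*}
\Psi(X) := \Phi(A)^{-1/2}\Phi(A^{1/2}XA^{1/2})\Phi(A)^{-1/2}.
\end{equation*}
A direct check verifies that $\Psi$ is linear, positive, and unital, since $\Psi(I) = \Phi(A)^{-1/2}\Phi(A)\Phi(A)^{-1/2} = I$. Setting $C := A^{-1/2}BA^{-1/2}$, we have $\Psi(C) = \Phi(A)^{-1/2}\Phi(B)\Phi(A)^{-1/2}$, and the Kubo-Ando representation formula \eqref{mean} for the representing function $f\in\mathfrak{m}$ of $M$ yields
\begin{equation*}
\Phi(M(A,B)) = \Phi(A)^{1/2}\Psi(f(C))\Phi(A)^{1/2}, \qquad M(\Phi(A),\Phi(B)) = \Phi(A)^{1/2}f(\Psi(C))\Phi(A)^{1/2}.
\end{equation*}
Sandwiching by $\Phi(A)^{-1/2}$, the proposition reduces to establishing the one-variable Jensen-type inequality $\Psi(f(C)) \leq f(\Psi(C))$ for the positive unital $\Psi$ and the operator monotone $f \in \mathfrak{m}$.

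For this one-variable inequality, I would use Proposition~\ref{harmonicreprprop} to write $f(x) = \int_{[0,1]} h_s(x)\,d\nu(s)$ with $h_s(x) = [(1-s) + sx^{-1}]^{-1}$, and commute the linear map $\Psi$ past the integral. This reduces the claim to the pointwise estimate $\Psi(h_s(C)) \leq h_s(\Psi(C))$ for every $s \in [0,1]$. For the endpoint cases $s=0$ and $s=1$, where $h_0 \equiv 1$ and $h_1(x)=x$, the inequality is trivial; for $s\in(0,1)$, it is the Jensen operator inequality for the operator monotone (equivalently, operator concave, since $h_s(0)=0$) building block $h_s$.

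The main obstacle is the step $\Psi(h_s(C)) \leq h_s(\Psi(C))$ for $s\in(0,1)$. Unrolling, one wants $\Psi(Y) \leq [(1-s) + s\Psi(C)^{-1}]^{-1}$ with $Y := h_s(C) = [(1-s)I + sC^{-1}]^{-1}$. The Kadison-Schwarz bound $\Psi(X^{-1}) \geq \Psi(X)^{-1}$ applied to both $Y$ and $C$ produces only the chain $\Psi(Y)^{-1} \leq \Psi(Y^{-1}) = (1-s) + s\Psi(C^{-1})$, with the lower bound $(1-s)+s\Psi(C)^{-1}$ ending up on the \emph{same} side as $\Psi(Y)^{-1}$; thus the Kadison-Schwarz chain alone does not close. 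The fix is the boundary condition $h_s(0)=0$, which allows one to invoke Hansen-Pedersen's Jensen operator inequality for positive unital linear maps applied to operator concave functions $f$ with $f(0)\geq 0$.

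Once the single-variable inequality $\Psi(h_s(C)) \leq h_s(\Psi(C))$ is in hand, integrating against $\nu$ recovers $\Psi(f(C))\leq f(\Psi(C))$, and conjugating by $\Phi(A)^{1/2}$ then yields $\Phi(M(A,B))\leq M(\Phi(A),\Phi(B))$, completing the proof.
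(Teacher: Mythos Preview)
Your proof is correct. Both your argument and the paper's rely on the integral representation from Proposition~\ref{harmonicreprprop}, but the routes diverge from there. The paper stays in two variables throughout: it writes $M(A,B) = \int_{[0,1]} [(1-s)A^{-1}+sB^{-1}]^{-1}\,d\nu(s)$ and directly applies the known two-variable inequality $\Phi\bigl([(1-s)A^{-1}+sB^{-1}]^{-1}\bigr) \leq [(1-s)\Phi(A)^{-1}+s\Phi(B)^{-1}]^{-1}$ (cited as Theorem~4.1.5 in \cite{bhatia2}), then integrates against $\nu$. Your reduction via the auxiliary unital map $\Psi$ converts the problem to the one-variable Jensen inequality $\Psi(f(C)) \leq f(\Psi(C))$, which makes explicit that the result is a consequence of operator concavity of the representing function. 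The paper's approach is shorter (one citation and done); yours is more self-contained and exposes the mechanism more clearly. One small remark: since your $\Psi$ is \emph{unital}, the Davis--Choi--Jensen inequality for operator concave functions applies without any boundary condition at $0$, so the $h_s(0)=0$ observation and the Hansen--Pedersen formulation are unnecessary, and the digression on why the Kadison--Schwarz chain alone fails could be dropped.
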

\begin{proof}
Using Proposition~\ref{harmonicreprprop} we have that
\begin{equation*}
M(A,B)=\int_{[0,1]}[(1-s)A^{-1}+sB^{-1}]^{-1}d\nu(s)
\end{equation*}
where $\nu$ is a probability measure on $[0,1]$. By Theorem 4.1.5 in \cite{bhatia2} we have that
\begin{equation*}
\Phi([(1-s)A^{-1}+sB^{-1}]^{-1})\leq [(1-s)\Phi(A)^{-1}+s\Phi(B)^{-1}]^{-1}.
\end{equation*}
Using the fact that $\nu$ can be approximated by finitely supported measures and the linearity of $\Phi$, we get from the above that
\begin{equation*}
\begin{split}
\Phi(\int_{[0,1]}[(1-s)A^{-1}+sB^{-1}]^{-1}d\nu(s))=\int_{[0,1]}\Phi([(1-s)A^{-1}+sB^{-1}]^{-1})d\nu(s)\\
\leq \int_{[0,1]}[(1-s)\Phi(A)^{-1}+s\Phi(B)^{-1}]^{-1}d\nu(s).
\end{split}
\end{equation*}

\end{proof}

In \cite{kubo} Kubo and Ando defined the transpose of a matrix mean $M(A,B)$ as
\begin{equation}\label{transpose}
M'(A,B)=M(B,A).
\end{equation}
By Proposition~\ref{harmonicreprprop} it is clear that for an
\begin{equation*}
M(A,B)=\int_{[0,1]}[(1-s)A^{-1}+sB^{-1}]^{-1}d\nu(s)
\end{equation*}
we have that
\begin{equation*}
M'(A,B)=M(B,A)=\int_{[0,1]}[(1-s)B^{-1}+sA^{-1}]^{-1}d\nu(s).
\end{equation*}
So if $M'(A,B)$ has corresponding measure $\nu'$, then $d\nu'(s)=d\nu(1-s)$. Similarly for the representing functions we have $f^{tr}(x)=xf(1/x)$. Also symmetric means $M(A,B)=M'(A,B)$ have corresponding probability measures $\nu$ such that $d\nu(s)=d\nu(1-s)$ and vice versa.

We have one more result characterizing the partial order between operator means which should be well known, however its proof cannot be found anywhere in the literature.

\begin{proposition}\label{P:ordermean}
Let $M,N\in\mathfrak{M}$ with representing functions $f,g\in\mathfrak{m}$ respectively. Then
$M(A,B)\leq N(A,B)$ for all $A,B\in\mathbb{P}$ if and only if $f(x)\leq g(x)$ for $x>0$.
\end{proposition}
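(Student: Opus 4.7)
The plan is to reduce both implications to the representation formula \eqref{mean}, which expresses any operator mean as a conjugation-sandwich of its representing function. Both directions then follow from elementary functional-calculus considerations, with no real analytic machinery needed; I do not expect any serious obstacle, which is presumably why the proposition has the reputation of being folklore without a written-down proof.

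For the \emph{if} direction, assume $f(x)\le g(x)$ for every $x>0$. Fix $A,B\in\mathbb{P}$ and let $T=A^{-1/2}BA^{-1/2}\in\mathbb{P}$. The spectrum $\sigma(T)$ is a compact subset of $(0,\infty)$, and on this set the scalar inequality $f\le g$ holds. Under the continuous functional calculus applied to the self-adjoint operator $T$, pointwise scalar inequalities on the spectrum transfer to the positive definite order, so $f(T)\le g(T)$. Conjugating both sides by $A^{1/2}$ preserves the order, and the representation \eqref{mean} then yields
\begin{equation*}
M(A,B)=A^{1/2}f(T)A^{1/2}\le A^{1/2}g(T)A^{1/2}=N(A,B).
\end{equation*}

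For the \emph{only if} direction, I would specialize the operator inequality to the simplest possible operators, namely scalar multiples of the identity. Fix $x>0$ and take $A=I$, $B=xI$; then $A^{-1/2}BA^{-1/2}=xI$, and because $f$ (resp.\ $g$) applied to a positive scalar multiple of the identity equals $f(x)I$ (resp.\ $g(x)I$) by the functional calculus, the representation \eqref{mean} gives
\begin{equation*}
M(I,xI)=f(x)I,\qquad N(I,xI)=g(x)I.
\end{equation*}
The hypothesis $M(I,xI)\le N(I,xI)$ in the positive definite order is then equivalent to the scalar inequality $f(x)\le g(x)$. Since $x>0$ was arbitrary, this finishes the proof. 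The only point worth being careful about is making sure the representation formula \eqref{mean} is used in the normalization convention of the paper (with $f(1)=1$), but this is purely cosmetic; no nontrivial step is hidden here.
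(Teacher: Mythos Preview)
Your proof is correct and follows essentially the same route as the paper: both directions rest on the representation formula \eqref{mean} and elementary functional calculus. The paper additionally writes out the transpose representations $M(A,B)=B^{1/2}f^{tr}(B^{-1/2}AB^{-1/2})B^{1/2}$, but this is not used in any essential way; your direct argument via $T=A^{-1/2}BA^{-1/2}$ and the specialization $A=I$, $B=xI$ is in fact cleaner and more explicit than the paper's terse ``reverse of the same argument.''
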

\begin{proof}
From the definition of the transpose mean we have the following formulas
\begin{eqnarray*}
M(A,B)=A^{1/2}f(A^{-1/2}BA^{-1/2})A^{1/2}=M'(B,A)=B^{1/2}f^{tr}(B^{-1/2}AB^{-1/2})B^{1/2}\\
N(A,B)=A^{1/2}g(A^{-1/2}BA^{-1/2})A^{1/2}=N'(B,A)=B^{1/2}g^{tr}(B^{-1/2}AB^{-1/2})B^{1/2}
\end{eqnarray*}
where $f^{tr}(x)=xf(1/x)$ and $g^{tr}(x)=xg(1/x)$.

So now suppose $f(x)\leq g(x)$ for $x>0$. Then it follows that also $f^{tr}(x)\leq g^{tr}(x)$, hence by the above correspondence $M(A,B)\leq N(A,B)$. The reverse implication also follows from the reverse of the same argument.
\end{proof}

\section{Contraction property of operator means}
In the section we prove further properties of operator means using explicitly the integral characterizations given in Proposition~\ref{harmonicreprprop}. We will use the results given in this section to generalize the construction of matrix power means to all possible operator means in later sections.

Let $E$ be a Hilbert space, $\mathfrak{B}(E)$ denote the Banach space of bounded linear operators, $S(E)$ denote the Banach space of bounded linear self-adjoint operators and $\mathbb{P}\subseteq S(E)$ the cone of positive definite and $\mathbb{P}_0$ the cone of positive semi-definite operators. On $\mathbb{P}$ we have the positive definite order similarly to the finite dimensional case which means that $A\leq B$ for $A,B\in\mathbb{P}$ if and only if $B-A\in\mathbb{P}_0$. It is also easy to see that if for $A,B\in S(E)$ and $0\leq A\leq B$ then also $\left\|A\right\|\leq \left\|B\right\|$ \cite{lawsonlim1}. We will use the notation $[A,B]$ for the order interval generated by $A\leq B$ i.e., $[A,B]=\{X\in\mathbb{P}:A\leq X\leq B\}$. We also have that $\mathbb{P}=\bigcup_{k=1}^{\infty}\left[\frac{1}{k}I,kI\right]$.

We would also like to consider measures taking values in $\mathbb{P}$ and we would like to integrate with respect to these measures. There are a number of different (stronger or weaker) ways to do that. In our setting the integral that we need is the weak operator form of the Pettis integral. First let us introduce weak operator measurability. 
\begin{definition}[weak measurability, weak integrability]
Let $(\Omega,\mathcal{A},\mu)$ be a finite measure space. Let $f:\Omega\to S(E)$ be given. The function $f$ is said to be weakly (operator) measurable if and only if
$$\left\langle f(\omega)x,y\right\rangle$$
is $\mu$ measurable for all $x,y\in E$.

We also say that the function $f$ is weakly integrable if it is weakly measurable and
$$\int_\Omega|\left\langle f(\omega)x,y\right\rangle| d\mu(\omega)<+\infty$$
for all $x,y\in E$.
\end{definition}

Now integrability is defined as follows.
\begin{definition}[weak operator Pettis integral]\label{D:pettis}
Let $(\Omega,\mathcal{A},\mu)$ be a finite measure space. Let $f:\Omega\to S(E)$ be weakly operator measurable. If there exists $A\in S(E)$ such that $\left\langle Ax,y\right\rangle=\int_{\Omega}\left\langle f(\omega)x,y\right\rangle d\mu(\omega)$ for all $x,y\in E$, then we define
$$\int_{\Omega}fd\mu:=A.$$
\end{definition}
Clearly the uniqueness of the integral is satisfied. The above definition is based on the definition of the Pettis integral in an arbitrary Banach space $B$, where we require the existence of $x\in B$, for an $f:\Omega\mapsto B$ such that $\left\langle x,y^{*}\right\rangle=\int_{\Omega}\left\langle f(\omega),y^{*}\right\rangle d\mu(\omega)$ where $y^{*}\in B^{*}$. In other words first we require the weak measurability of $f$ i.e., $\left\langle f(\omega),y^{*}\right\rangle$ is measurable for all $y^{*}\in B^{*}$, and then integrability i.e., the existence of such $x\in B$.
Now there is a different notion of integrability due to Dunford which asks for the existence of $x^{**}\in B^{**}$ such that $\left\langle x^{**},y^{*}\right\rangle=\int_{\Omega}\left\langle f(\omega),y^{*}\right\rangle d\mu(\omega)$ for all $y^{*}\in B^{*}$. It is known that every weakly measurable function $f$ that is weakly integrable (i.e., $\int_{\Omega}|\left\langle f(\omega),y^{*}\right\rangle| d\mu(\omega)<+\infty$ for all $y^{*}\in B^{*}$) is Dunford integrable, see Theorem 11.55 in \cite{aliprantis}. In order to ensure the existence of the operator $A\in S(E)$ in Definition~\ref{D:pettis} we need some preliminary results.
\begin{lemma}\label{L:weakoperatorPettis}
Let $(\Omega,\mathcal{A},\mu)$ be a finite measure space and let $f:\Omega\to S(E)$ be weakly measurable and weakly integrable. Then the weak operator Pettis integral $\int_{\Omega}fd\mu$ exists.
\end{lemma}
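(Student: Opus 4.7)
My plan is to construct the operator $A$ vectorwise via Dunford integration in the Hilbert space $E$, then upgrade it to a bounded self-adjoint operator by combining a closed-graph argument with the uniform boundedness principle. First, I fix $x\in E$ and consider the $E$-valued map $g_x(\omega):=f(\omega)x$. Weak operator measurability of $f$ says $\omega\mapsto\langle f(\omega)x,y\rangle$ is measurable for every $y\in E$, and after the Riesz identification $E^*\cong E$ this exactly means $g_x$ is weakly measurable; weak integrability of $f$ gives weak integrability of $g_x$ in the Banach-space sense. Dunford's theorem (Theorem 11.55 in \cite{aliprantis}, as the authors already invoke) then produces an element of $E^{**}$ representing the integral of $g_x$. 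Since $E$ is a Hilbert space and hence reflexive, this element corresponds to a unique vector $Ax\in E$ characterized by
\begin{equation*}
\langle Ax,y\rangle=\int_{\Omega}\langle f(\omega)x,y\rangle\,d\mu(\omega)\quad\text{for every } y\in E.
\end{equation*}
Linearity of $A$ in $x$ is immediate from linearity of $g_x$ in $x$ and of the integral.

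The heart of the proof is showing that $A$ is bounded. For each fixed $y\in E$, define $\Theta_y\colon E\to L^1(\mu)$ by $\Theta_y(x)(\omega):=\langle f(\omega)x,y\rangle$. This map is linear and lands in $L^1(\mu)$ by the weak integrability hypothesis. I claim it has closed graph: if $x_n\to x$ in $E$ and $\Theta_y(x_n)\to\phi$ in $L^1(\mu)$, then pointwise $\Theta_y(x_n)(\omega)\to\langle f(\omega)x,y\rangle=\Theta_y(x)(\omega)$ by continuity of the bounded operator $f(\omega)$, while an a.e.\ convergent subsequence of $\Theta_y(x_n)$ forces $\phi=\Theta_y(x)$ $\mu$-a.e. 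The closed graph theorem thus yields boundedness of $\Theta_y$, and therefore
\begin{equation*}
|\langle Ax,y\rangle|=\Bigl|\int_\Omega\Theta_y(x)\,d\mu\Bigr|\leq\|\Theta_y(x)\|_{L^1}\leq\|\Theta_y\|\,\|x\|.
\end{equation*}
Hence for $\|x\|\leq 1$ the set $\{Ax\}$ is weakly bounded in $E$, and the uniform boundedness principle upgrades this to norm boundedness, so $A\in\mathfrak{B}(E)$.

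Self-adjointness is then a short calculation: using $f(\omega)\in S(E)$,
\begin{equation*}
\langle Ax,y\rangle=\int\langle f(\omega)x,y\rangle\,d\mu=\int\overline{\langle f(\omega)y,x\rangle}\,d\mu=\overline{\langle Ay,x\rangle}=\langle x,Ay\rangle,
\end{equation*}
so $A\in S(E)$ and by its defining identity $A=\int_\Omega f\,d\mu$ in the sense of Definition~\ref{D:pettis}. The step I expect to be the principal obstacle is the boundedness argument: without strong measurability or any $L^1$ control on $\omega\mapsto\|f(\omega)\|$, dominated convergence is not available for the naive interchange of limits and integrals, and one is forced to route boundedness through the two-stage closed-graph/uniform-boundedness combination above. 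Once that is in place, the remaining verifications are essentially automatic.
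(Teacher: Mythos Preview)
Your proof is correct and follows essentially the same route as the paper's: define $g_x(\omega)=f(\omega)x$, invoke Dunford integrability together with reflexivity of $E$ to obtain $Ax\in E$, and then verify that $x\mapsto Ax$ is linear, bounded, and self-adjoint. The only difference is that the paper dismisses boundedness as ``routine to check,'' whereas you supply an explicit closed-graph/uniform-boundedness argument---a genuine elaboration rather than an alternative approach.
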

\begin{proof}
By assumption $\left\langle f(\omega)x,y\right\rangle$ is $\mu$-measurable and $\int_{\Omega}|\left\langle f(\omega)x,y\right\rangle| d\mu(\omega)<+\infty$ for all $x,y\in E$. Define $F_x:\Omega\lra E$ by $F_x(\omega):=f(\omega)x$. Then we have for fixed $x\in E$ that for all $y\in E$, $\left\langle F_x(\omega),y\right\rangle$ is $\mu$-measurable and $\int_{\Omega}|\left\langle F_x(\omega),y\right\rangle| d\mu(\omega)<+\infty$, and since $E$ is Hilbert therefore the linear functionals $y^*(\cdot):=\left\langle \cdot,y\right\rangle$ exhaust $E^*$ by the Riesz representation theorem. Therefore the function $F_x:\Omega\lra E$ is Dunford integrable i.e., there exists a unique $\hat{F}(x)\in E^{**}\simeq E$ such that $\int_{\Omega}\left\langle F_x(\omega),y\right\rangle d\mu(\omega)=\left\langle \hat{F}(x),y\right\rangle$. Now it is routine to check that $x\mapsto\hat{F}(x)$ defines a bounded (it maps bounded sets to bounded sets) linear operator on $E$, moreover by the assumptions $\hat{F}\in S(E)$.
\end{proof}


We have a generalized version of the Dominated Convergence Theorem for the Pettis integral that makes us of:
\begin{theorem}\label{T:domconv}
Let $f:\Omega\mapsto S(E)$ satisfy the following:
\begin{itemize}
\item There exists a sequence of weak operator Pettis integrable functions $f_n$ such that $\lim_{n\to\infty}\left\langle f_nx,y\right\rangle=\left\langle fx,y\right\rangle$ in measure for all $x,y\in E$.
\item There exists a real valued $\mu$-integrable function $h:\Omega\to\mathbb{R}$ such that for each linear functional $l(Z):=\left\langle Zx,y\right\rangle$ in the weak*-compact polar of some neighborhood generated by the duality provided by the weak operator topology and $n\in\mathbb{N}$ the inequality $|\left\langle f_nx,y\right\rangle|\leq h$ holds a.e.
\end{itemize}
Then $f$ is weak operator Pettis integrable and
$$\lim_{n\to\infty}\int f_nd\mu=\int fd\mu$$
in the weak operator topology.
\end{theorem}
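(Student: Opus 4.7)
The plan is to reduce to the classical scalar dominated convergence theorem applied to each pair of matrix coefficients $\langle f_n x,y\rangle$, and then assemble the pointwise limits into a bounded sesquilinear form whose Riesz representation will furnish the desired weak operator Pettis integral. First I would fix $x,y\in E$ and note that, by hypothesis, $\langle f_n(\omega)x,y\rangle\to\langle f(\omega)x,y\rangle$ in $\mu$-measure and, for pairs $(x,y)$ for which $\langle\cdot\,x,y\rangle$ lies in the indicated polar, $|\langle f_n(\omega)x,y\rangle|\le h(\omega)$ a.e.\ uniformly in $n$. Convergence in measure together with a common $L^1$-majorant yields $L^1$-convergence (Vitali's form of the dominated convergence theorem, or by extracting almost everywhere convergent subsequences), so $\omega\mapsto\langle f(\omega)x,y\rangle$ is $\mu$-measurable and integrable, with
\begin{equation*}
\int_{\Omega}\langle f_n(\omega)x,y\rangle\,d\mu(\omega)\lra \int_{\Omega}\langle f(\omega)x,y\rangle\,d\mu(\omega).
\end{equation*}

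Next I would define the candidate integral through the form
\begin{equation*}
B(x,y):=\int_{\Omega}\langle f(\omega)x,y\rangle\,d\mu(\omega),
\end{equation*}
which is sesquilinear because the inner product and the integral both are. To obtain boundedness of $B$ I would exploit the second hypothesis: the polar of a basic WOT-neighborhood of $0$ is an equicontinuous, weak$^{*}$-compact (Alaoglu--Bourbaki), and therefore norm-bounded set of functionals $\langle\cdot\,x,y\rangle$ on which the estimate $|\langle f_n x,y\rangle|\le h$ holds uniformly in $n$. Passing to the $L^{1}$-limit and then integrating gives $|B(x,y)|\le\int h\,d\mu$ uniformly over this polar, and scaling upgrades this to $|B(x,y)|\le C\|x\|\|y\|$ for a fixed constant $C$. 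The Riesz representation theorem for bounded sesquilinear forms on a Hilbert space then produces a unique $A\in\mathfrak{B}(E)$ with $\langle Ax,y\rangle=B(x,y)$, and self-adjointness of each $f_n$ transfers in the limit to $B(x,y)=\overline{B(y,x)}$, so in fact $A\in S(E)$. By construction $A$ satisfies the defining identity of Definition~\ref{D:pettis}, so $A=\int_{\Omega}f\,d\mu$ exists as a weak operator Pettis integral, and the weak operator convergence $\int f_n\,d\mu\to\int f\,d\mu$ is immediate from the scalar limit of the first paragraph applied coefficient by coefficient.

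The main obstacle I anticipate is converting the somewhat opaque polar-of-a-neighborhood domination hypothesis into the clean estimate $|B(x,y)|\le C\|x\|\|y\|$ needed to invoke Riesz representation; this requires being careful with the basic WOT-neighborhoods, their polars, and verifying that every normalized pair $(x,y)$ yields a functional controlled by (a scalar multiple of) one such polar set. A secondary, routine issue is upgrading convergence in $\mu$-measure to the convergence required by the scalar DCT, which is handled by the standard subsequence argument combined with the fact that the $L^{1}$-limit is independent of the chosen subsequence.
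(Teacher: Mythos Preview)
Your approach is correct and, in fact, more detailed than what the paper offers: the paper does not prove this theorem at all, stating only that it ``is a consequence of the Lebesgue Dominated Convergence Theorem and the Banach space form of it can be found in \cite{musial}.'' Your reduction to the scalar dominated convergence theorem coefficient by coefficient is exactly the natural unpacking of that one-line justification.

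One minor simplification: once you have established that $\omega\mapsto\langle f(\omega)x,y\rangle$ is measurable and integrable for every $x,y\in E$, you need not rebuild the Riesz representation argument from scratch; the paper's Lemma~\ref{L:weakoperatorPettis} already packages exactly this step (weak measurability plus weak integrability implies existence of the weak operator Pettis integral in $S(E)$), so you could invoke it directly and save the sesquilinear-form discussion. Your concern about the polar hypothesis is legitimate but not fatal: a basic WOT-neighborhood of $0$ has the form $\{Z:|\langle Zx_i,y_i\rangle|<\epsilon,\ i=1,\dots,k\}$, and its polar is generated by the finitely many functionals $\langle\cdot\,x_i,y_i\rangle$; since any pair $(x,y)$ determines such a neighborhood, the domination extends by scaling to all normalized pairs, giving the bound you need.
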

This theorem is a consequence of the Lebesgue Dominated Convergence Theorem and the Banach space form of it can be found in \cite{musial}.

It is well known that all Bochner integrable functions with respect to a measure are Pettis integrable. It is also known that a Bochner integrable function always has separable range. That is one reason why we cannot apply the Bochner integral, since $S(E)$ contains non-separable subspaces. Also if $\Omega$ is a compact Hausdorff space with a probability measure $\mu$ and $f:\Omega\mapsto X$ is continuous where $X$ is a Banach space then $f$ is Pettis integrable with respect to $\mu$, see Theorem 3.27 in \cite{rudin}.

On $\mathbb{P}$ the partial order induces a complete metric space structure \cite{thompson}. The Thompson or part metric is defined as
\begin{equation}\label{thompsonmetric}
d_\infty(A,B)=\max\left\{\log M(A/B),M(B/A)\right\}
\end{equation}
for any $A,B\in\mathbb{P}$, where $M(A/B)=\inf\{\alpha:A\leq\alpha B\}$. The metric space $(\mathbb{P},d_\infty)$ is complete and has some several other nice properties.
\begin{lemma}[Lemma 10.1 in \cite{lawsonlim0}]\label{thompsonproperties}
We have
\begin{enumerate}
	\item $d_\infty(rA,rB)=d_\infty(A,B)$ for any $r>0$,
	\item $d_\infty(A^{-1},B^{-1})=d_\infty(A,B)$,
	\item $d_\infty(MAM^*,MBM^*)=d_\infty(A,B)$ for all $M\in \mathrm{GL}(E)$ where $\mathrm{GL}(E)$ denotes the Banach-Lie group of all invertible bounded linear operators on $E$,
	\item $d_\infty(\sum_{i=1}^kt_iA_i,\sum_{i=1}^kt_iB_i)\leq \max_{1\leq i\leq k}d_\infty(A_i,B_i)$ where $t_i>0$,
	\item $e^{-d_\infty(A,B)}B\leq A\leq e^{d_\infty(A,B)}B$ and $e^{-d_\infty(A,B)}A\leq B\leq e^{d_\infty(A,B)}A$.
\end{enumerate}
\end{lemma}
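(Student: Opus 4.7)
The overall plan is to reduce every claim directly to the defining formula $M(A/B)=\inf\{\alpha>0:A\leq\alpha B\}$, which is the basic building block of $d_\infty$. The key preliminary observation I would record first is that the set $S(A,B):=\{\alpha>0:A\leq\alpha B\}$ is closed in $(0,\infty)$ (if $\alpha_n\downarrow \alpha^{*}$ with $A\leq\alpha_n B$ then $A\leq\alpha^{*}B$ by taking strong limits of the positive operators $\alpha_n B-A$), so the infimum is actually a minimum. Consequently $A\leq M(A/B)\cdot B$ and $B\leq M(B/A)\cdot A$ hold exactly, which will be used repeatedly.

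For items (1)--(3) the strategy is to show that each transformation in question preserves $S(A,B)$ up to an identification, and therefore preserves $M(A/B)$, hence $d_\infty$. For (1), $A\leq\alpha B\iff rA\leq\alpha (rB)$ for any $r>0$, so $M(rA/rB)=M(A/B)$. For (2), I use that $X\mapsto X^{-1}$ is order-reversing on $\mathbb{P}$: $A\leq\alpha B$ is equivalent (multiply both sides by $\alpha^{-1}$ and invert) to $B^{-1}\leq\alpha A^{-1}$, giving $M(B^{-1}/A^{-1})=M(A/B)$; after interchanging the roles of $A$ and $B$ and taking the max, the identity drops out. For (3), invertibility of $M\in\mathrm{GL}(E)$ and the congruence $X\mapsto MXM^{*}$ being an order-isomorphism on $\mathbb{P}$ gives $A\leq\alpha B\iff MAM^{*}\leq\alpha MBM^{*}$, so $M(MAM^{*}/MBM^{*})=M(A/B)$.

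For (4) I would set $\alpha:=\max_i d_\infty(A_i,B_i)$, so by the last observation of the opening paragraph $A_i\leq e^{\alpha}B_i$ and $B_i\leq e^{\alpha}A_i$ for every $i$. Multiplying by $t_i>0$ and summing (both operations preserve the positive semidefinite order) yields $\sum_i t_iA_i\leq e^{\alpha}\sum_i t_iB_i$ and the symmetric inequality, hence $M\!\left(\sum t_iA_i/\sum t_iB_i\right)\leq e^{\alpha}$ and similarly the reverse, which gives $d_\infty(\sum t_iA_i,\sum t_iB_i)\leq\alpha$.

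Finally, item (5) is essentially the converse viewpoint and is already available from the opening paragraph: with $\alpha:=d_\infty(A,B)$ one has $\log M(A/B)\leq\alpha$ and $\log M(B/A)\leq\alpha$, so the attained-infimum remark gives $A\leq e^{\alpha}B$ and $B\leq e^{\alpha}A$, and the remaining two inequalities are obtained by multiplying through by $e^{-\alpha}$. There is no serious obstacle in any step; the only subtlety worth care is justifying that $\{\alpha:A\leq\alpha B\}$ is closed so that the minimum is realized, which is needed both to formulate (5) as an honest inequality in $\mathbb{P}$ and to be able to plug $\alpha=d_\infty(A_i,B_i)$ back into the sum estimate in (4).
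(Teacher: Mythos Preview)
Your proof is correct. Note that the paper does not supply its own proof of this lemma: it is simply quoted as Lemma~10.1 from \cite{lawsonlim0}, so there is nothing in the paper to compare your argument against. Your direct verification from the definition of $M(A/B)$ is the standard route and would serve perfectly well as a self-contained justification.
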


Property 4. in Lemma~\ref{thompsonproperties} is important for us, but we need a refined and also a weighted version of it. The refined version considers integrals instead of sums.

\begin{lemma}\label{L:infinite4}
Let $(\Omega,\mathcal{A},\mu)$ be a probability space, and let $f_1,f_2:\Omega\mapsto\mathbb{P}$ be measurable such that both $f_1$ and $f_2$ are weak operator Pettis integrable with respect to $\mu$. Then
\begin{enumerate}
	\item[(4')] $d_\infty(\int_{\Omega}f_1d\mu,\int_{\Omega}f_2d\mu)\leq \sup_{\omega\in\supp(\mu)}d_\infty(f_1(\omega),f_2(\omega))$, assuming that the supremum is finite.
\end{enumerate}
\end{lemma}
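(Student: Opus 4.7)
Set $s := \sup_{\omega \in \supp(\mu)} d_\infty(f_1(\omega), f_2(\omega))$, which is finite by hypothesis. My plan is to transfer the pointwise two-sided bounds furnished by property (5) of Lemma~\ref{thompsonproperties} through the weak operator Pettis integral and then read off the desired inequality from the definition of $d_\infty$.

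First I would note that by Lemma~\ref{thompsonproperties}(5), for every $\omega \in \supp(\mu)$,
\begin{equation*}
e^{-s} f_2(\omega) \leq f_1(\omega) \leq e^{s} f_2(\omega),
\end{equation*}
and by symmetry the analogous bounds hold with the roles of $f_1$ and $f_2$ swapped. These are inequalities in the positive definite order on $\mathbb{P}$, and on $\Omega \setminus \supp(\mu)$ they hold $\mu$-a.e. trivially.

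Next I would verify that the weak operator Pettis integral is monotone with respect to the positive definite order: if $g_1, g_2 : \Omega \to S(E)$ are weakly integrable and $g_1(\omega) \leq g_2(\omega)$ for $\mu$-a.e.\ $\omega$, then $\int_\Omega g_1\, d\mu \leq \int_\Omega g_2\, d\mu$. This is straightforward, because for every $x \in E$ the scalar function $\omega \mapsto \langle (g_2(\omega) - g_1(\omega))x, x\rangle$ is nonnegative a.e.\ and $\mu$-integrable, so
\begin{equation*}
\left\langle \left(\textstyle\int_\Omega g_2\, d\mu - \int_\Omega g_1\, d\mu\right)x, x\right\rangle = \int_\Omega \langle (g_2(\omega)-g_1(\omega))x,x\rangle\, d\mu(\omega) \geq 0,
\end{equation*}
i.e.\ $\int g_2\, d\mu - \int g_1\, d\mu \in \mathbb{P}_0$. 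Applying this monotonicity to the pointwise bounds above, together with the fact that scalar multiplication by $e^{\pm s}$ commutes with the Pettis integral (immediate from Definition~\ref{D:pettis}), yields
\begin{equation*}
e^{-s} \int_\Omega f_2\, d\mu \leq \int_\Omega f_1\, d\mu \leq e^{s} \int_\Omega f_2\, d\mu.
\end{equation*}
By the definition of $M(\cdot/\cdot)$ this gives $M\bigl(\int f_1\, d\mu \,/\, \int f_2\, d\mu\bigr) \leq e^s$, and symmetrically for the reversed ratio, so $d_\infty\bigl(\int f_1\, d\mu, \int f_2\, d\mu\bigr) \leq s$, as required.

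The only subtle point I would expect is the order-preservation of the Pettis integral, which in this setting reduces cleanly to the scalar case thanks to the weak operator formulation; no separability or Bochner-type hypothesis on $f_1, f_2$ is needed beyond the given weak integrability.
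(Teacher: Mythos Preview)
Your proof is correct and follows essentially the same route as the paper: both arguments use the pointwise two-sided order bounds $f_1(\omega)\leq e^{s}f_2(\omega)$ (and symmetrically) on $\supp(\mu)$, pass them through the weak operator Pettis integral by testing against $\langle\,\cdot\,x,x\rangle$, and read off the Thompson-metric bound from the resulting operator inequalities.
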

\begin{proof}
Let $\alpha:=e^{\sup_{\omega\in\supp(\mu)}d_\infty(f_1(\omega),f_2(\omega))}$. Then for all $\omega\in\supp(\mu)$ we have 
\begin{eqnarray*}
f_1(\omega)&\leq& \alpha f_2(\omega)\\
f_2(\omega)&\leq& \alpha f_1(\omega).
\end{eqnarray*}
Now $f_1(\omega)\leq \alpha f_2(\omega)$ is equivalent to
$$\left\langle [\alpha f_2(\omega)-f_1(\omega)]x,x\right\rangle\geq 0$$
for all $x\in E$. Then by the definition of the weak operator Pettis integral we have
\begin{eqnarray*}
\int_{\Omega}\left\langle \left[\alpha f_2(\omega)-f_1(\omega)\right]x,x\right\rangle d\mu&\geq& 0\\
\left\langle \left\{\int_{\Omega}\left[\alpha f_2(\omega)-f_1(\omega)\right]d\mu\right\} x,x\right\rangle &\geq& 0\\
\left\langle \left[\alpha \int_{\Omega}f_2(\omega) d\mu-\int_{\Omega}f_1(\omega) d\mu\right] x,x\right\rangle &\geq& 0\\
\int_{\Omega}f_1 d\mu&\leq& \alpha \int_{\Omega}f_2 d\mu.
\end{eqnarray*}
Now similar computation leads to $\int_{\Omega}f_2 d\mu\leq \alpha \int_{\Omega}f_1 d\mu$ starting from the other inequality.
\end{proof}

Here is the weighted version of property (4) for finite sums.

\begin{proposition}\label{weightedthompson}
Let Let $c_i>0$ be real numbers, $A_i,B_i\in\mathbb{P}$, $1\leq i\leq 2$ and suppose that $d_\infty(A_1,B_1)\geq d_\infty(A_2,B_2)$. Then we have
\begin{equation*}
\begin{split}
e^{d_{\infty}\left(c_1A_1+c_2A_2,c_1B_1+c_2B_2\right)}\leq&\max\left\{\frac{c_1e^{d_\infty(A_1,B_1)}+c_2e^{-d_{\infty}(A_1,A_2)}e^{d_\infty(A_2,B_2)}}{c_1+c_2e^{-d_{\infty}(A_1,A_2)}},\right.\\
&\left.\frac{c_1e^{d_\infty(A_1,B_1)}+c_2e^{-d_{\infty}(B_1,B_2)}e^{d_\infty(A_2,B_2)}}{c_1+c_2e^{-d_{\infty}(B_1,B_2)}}\right\}.
\end{split}
\end{equation*}
\end{proposition}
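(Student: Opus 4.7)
The plan is to reduce $e^{d_\infty(c_1A_1+c_2A_2,\,c_1B_1+c_2B_2)}$ to the maximum of the two ``one-sided'' bounds $M((c_1A_1+c_2A_2)/(c_1B_1+c_2B_2))$ and $M((c_1B_1+c_2B_2)/(c_1A_1+c_2A_2))$ coming from the definition of the Thompson metric, and to bound each separately. I will abbreviate $\alpha_i := e^{d_\infty(A_i,B_i)}$, $\theta := e^{d_\infty(B_1,B_2)}$, $\gamma := e^{d_\infty(A_1,A_2)}$; by part~(5) of Lemma~\ref{thompsonproperties} these satisfy $A_i \leq \alpha_i B_i$, $B_i \leq \alpha_i A_i$, $B_1 \leq \theta B_2$, $B_2 \leq \theta B_1$, and analogous bounds with $\gamma$ for the $A_i$'s. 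The hypothesis becomes $\alpha_1 \geq \alpha_2$.

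For the first quantity I would look for the smallest $\beta$ with $c_1A_1+c_2A_2 \leq \beta(c_1B_1+c_2B_2)$. Applying $A_i \leq \alpha_i B_i$ componentwise reduces this to the operator inequality $(\beta-\alpha_1)c_1B_1 + (\beta-\alpha_2)c_2B_2 \geq 0$. The improvement over the naive estimate $\beta=\alpha_1$ lives in the range $\beta\in[\alpha_2,\alpha_1]$, in which the $B_1$-coefficient is nonpositive. Since $B_1 \leq \theta B_2$ and the scalar coefficient $(\beta-\alpha_1)c_1$ is $\leq 0$, one gets the operator inequality $(\beta-\alpha_1)c_1 B_1 \geq (\beta-\alpha_1)c_1\theta B_2$, so the left side is bounded below by $[(\beta-\alpha_1)c_1\theta + (\beta-\alpha_2)c_2]\,B_2$. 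This yields the scalar sufficient condition
\[
\beta \;\geq\; \frac{c_1\alpha_1 + c_2\theta^{-1}\alpha_2}{c_1 + c_2\theta^{-1}},
\]
and since $\alpha_1 \geq \alpha_2$ the right-hand side is a weighted average of $\alpha_1,\alpha_2$ lying in $[\alpha_2,\alpha_1]$, which retroactively justifies the sign assumption on $\beta-\alpha_1$. This is precisely the second expression inside the max of the proposition.

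The second quantity $M((c_1B_1+c_2B_2)/(c_1A_1+c_2A_2))$ is handled by the same argument with $A$ and $B$ interchanged, which preserves each $\alpha_i$ and replaces $\theta$ by $\gamma$; this produces the first expression in the max. Combining the two via $e^{d_\infty(\cdot,\cdot)} = \max\{M(\cdot/\cdot),\, M(\cdot/\cdot)\}$ gives the stated inequality. The only genuinely delicate point, and what I expect to be the main obstacle, is bookkeeping the signs when pushing $B_1 \leq \theta B_2$ through the inequality: the substitution preserves the direction only because $(\beta-\alpha_1)c_1$ is nonpositive, and this is precisely where the ordering hypothesis $\alpha_1\geq\alpha_2$ enters essentially — it guarantees that the candidate $\beta$ lies below $\alpha_1$, so the substitution produces a valid lower bound rather than flipping the inequality.
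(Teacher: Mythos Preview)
Your proof is correct and follows essentially the same route as the paper's: both arguments split $e^{d_\infty}$ into the two one-sided $M(\cdot/\cdot)$ quantities, use $A_i\leq\alpha_iB_i$ to reduce to a sign-indefinite combination of $B_1,B_2$, and then exploit $B_1\leq\theta B_2$ (with the key observation that the $B_1$-coefficient is nonpositive in the relevant range $\beta\in[\alpha_2,\alpha_1]$) to obtain the scalar sufficient condition. The only cosmetic difference is that the paper eliminates $B_2$ in favor of $B_1$ while you eliminate $B_1$ in favor of $B_2$; after clearing $\theta$ the resulting bounds are identical.
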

\begin{proof}
Let $\alpha_i=e^{d_\infty(A_i,B_i)}$. By definition of $d_{\infty}$, in order to calculate $d_{\infty}\left(\sum_{i=1}^2c_iA_i,\sum_{i=1}^2c_iB_i\right)$, we are looking for the infimum of all $\beta\geq 0$ such that both
\begin{equation}\label{weightedthompson:eq2}
\begin{split}
c_1A_1+c_2A_2\leq \beta(c_1B_1+c_2B_2),\\
c_1B_1+c_2B_2\leq \beta(c_1A_1+c_2A_2)
\end{split}
\end{equation}
are satisfied. We also have that 
\begin{equation*}
\begin{split}
c_1A_1+c_2A_2\leq c_1\alpha_1B_1+c_2\alpha_2B_2\\
c_1B_1+c_2B_2\leq c_1\alpha_1A_1+c_2\alpha_2A_2.
\end{split}
\end{equation*}
From this it follows that the infimum of the $\beta$ satisfying \eqref{weightedthompson:eq2} are bounded above by the infimum of $\beta\geq 0$ satisfying
\begin{equation}\label{weightedthompson:eq1}
\begin{split}
c_1A_1+c_2A_2\leq c_1\alpha_1B_1+c_2\alpha_2B_2\leq \beta(c_1B_1+c_2B_2),\\
c_1B_1+c_2B_2\leq c_1\alpha_1A_1+c_2\alpha_2A_2\leq \beta(c_1A_1+c_2A_2).
\end{split}
\end{equation}
The first inequality above is equivalent to
\begin{equation*}
0\leq \sum_{i=1}^2(\beta-\alpha_i)c_iB_i.
\end{equation*}
Now by Property 4. in Lemma~\ref{thompsonproperties} we have the natural bound $\beta\leq \max_{1\leq i\leq 2}\alpha_i$. So we may try to find a better bound by assuming that $\alpha_1\geq \beta\geq \alpha_2$, 
where without loss of generality $\alpha_1\geq \alpha_2$. 
Using the assumption on $\beta$ we get that we seek the infimum of all $\beta\geq 0$ such that
\begin{equation*}
(\beta-\alpha_2)c_2B_2\geq (\alpha_1-\beta)c_1B_1.
\end{equation*}
Using property 5. in Lemma~\ref{thompsonproperties} we have that the sought $\beta$ above is bounded above by the infimum of all $\beta\geq 0$ satisfying
\begin{equation*}
(\beta-\alpha_2)c_2e^{-d_{\infty}(B_1,B_2)}\geq (\alpha_1-\beta)c_1.
\end{equation*}
This is equivalent to
\begin{equation*}
\sum_{i=1}^2(\beta-\alpha_i)c_ie^{-d_{\infty}(B_1,B_i)}\geq 0,
\end{equation*}
in other words we have that
\begin{equation*}
\beta\geq \frac{\sum_{i=1}^2c_i\alpha_ie^{-d_\infty(B_1,B_i)}}{\sum_{i=1}^2c_ie^{-d_\infty(B_1,B_i)}}=\frac{\sum_{i=1}^2e^{d_\infty(A_i,B_i)}c_ie^{-d_\infty(B_1,B_i)}}{\sum_{i=1}^2c_ie^{-d_\infty(B_1,B_i)}}.
\end{equation*}
Performing the same calculation ($A_i$ in place of $B_i$) by starting with the second inequality in \eqref{weightedthompson:eq1} we get
\begin{equation*}
\beta\geq \frac{\sum_{i=1}^2e^{d_\infty(A_i,B_i)}c_ie^{-d_\infty(A_1,A_i)}}{\sum_{i=1}^2c_ie^{-d_\infty(A_1,A_i)}}.
\end{equation*}
This means that
\begin{equation*}
\begin{split}
e^{d_{\infty}\left(\sum_{i=1}^2c_iA_i,\sum_{i=1}^2c_iB_i\right)}\leq&\max\left\{\frac{\sum_{i=1}^2e^{d_\infty(A_i,B_i)}c_ie^{-d_{\infty}(A_1,A_i)}}{\sum_{i=1}^2c_ie^{-d_{\infty}(A_1,A_i)}},\right.\\
&\left.\frac{\sum_{i=1}^2e^{d_\infty(A_i,B_i)}c_ie^{-d_{\infty}(B_1,B_i)}}{\sum_{i=1}^2c_ie^{-d_{\infty}(B_1,B_i)}}\right\}.
\end{split}
\end{equation*}
which is what we wanted to prove.
\end{proof}

Let $\overline{B}_A(r)=\{X\in\mathbb{P}:d_{\infty}(A,X)\leq r\}$.
The following elementary inequality for the weighted arithmetic mean will be useful. Let $\Delta_{n}$ denote the convex set of positive probability n-vectors i.e., $\omega=(w_{1},\dots,w_{n})\in\Delta_{n}$ if $w_i>0$ and $\sum_{i=1}^nw_i=1$.
\begin{lemma}\label{L:elementaryarithm}
Let $a_i\geq 0$, $1\leq i\leq n$, and suppose that $a_1\geq a_i$, for all $1\leq i\leq n$. Let $\omega=(w_{1},\dots,w_{n})\in\Delta_n$ and let $\mu=(u_{1},\dots,u_{n})\in\Delta_n$ such that $w_1\geq u_1$, and $w_i\leq u_i$ for $2\leq i\leq n$. Then we have
\begin{equation}
\sum_{i=1}^nw_ia_i\geq \sum_{i=1}^nu_ia_i.\tag{*}
\end{equation}
\end{lemma}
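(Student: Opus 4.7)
The plan is to prove this by rewriting the difference $\sum_{i=1}^n w_i a_i - \sum_{i=1}^n u_i a_i$ as a telescoping combination that exploits the maximality of $a_1$. First I would set $\delta := w_1 - u_1$, which is nonnegative by hypothesis, and observe that since both $\omega$ and $\mu$ sum to $1$, we have $\sum_{i=2}^n (u_i - w_i) = \delta$. Since by assumption $u_i - w_i \geq 0$ for $2 \leq i \leq n$, the nonnegative reals $\{u_i - w_i\}_{i=2}^n$ form a weight distribution of total mass $\delta$ over the indices $\{2,\ldots,n\}$.

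Next I would rewrite
\begin{equation*}
\sum_{i=1}^n w_i a_i - \sum_{i=1}^n u_i a_i = (w_1 - u_1) a_1 - \sum_{i=2}^n (u_i - w_i) a_i = \delta a_1 - \sum_{i=2}^n (u_i - w_i) a_i.
\end{equation*}
Applying the hypothesis $a_1 \geq a_i$ for $2 \leq i \leq n$ together with the nonnegativity of $u_i - w_i$ yields
\begin{equation*}
\sum_{i=2}^n (u_i - w_i) a_i \leq \sum_{i=2}^n (u_i - w_i) a_1 = \delta a_1,
\end{equation*}
so the difference is nonnegative, which is exactly $(*)$.

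This is essentially a one-line rearrangement argument; there is no genuine obstacle, since the constraints on $\omega,\mu$ and the maximality of $a_1$ align so that mass is being transferred from the largest coordinate to smaller ones, which can only decrease the arithmetic average. I would just need to write the three displayed lines carefully to make the sign bookkeeping transparent.
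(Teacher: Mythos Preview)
Your proof is correct and follows essentially the same approach as the paper: both write the difference as $(w_1-u_1)a_1-\sum_{i=2}^n(u_i-w_i)a_i$, bound each $a_i$ by $a_1$ using $u_i-w_i\geq 0$, and then use $\sum_i w_i=\sum_i u_i=1$ to conclude.
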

\begin{proof}
We have
\begin{eqnarray*}
\sum_{i=1}^nw_ia_i-\sum_{i=1}^nu_ia_i&=&(w_1-u_1)a_1-\sum_{i=2}^n(u_i-w_i)a_i\\
&\geq& (w_1-u_1)a_1-\sum_{i=2}^n(u_i-w_i)a_1\\
&=&\sum_{i=1}^nw_ia_1-\sum_{i=1}^nu_ia_1=0.
\end{eqnarray*}
\end{proof}

A cone $C$ is \emph{almost Archimedean} if the closure of the intersection of the cone with any two-dimensional subspace is still a cone, that is, contains no nontrivial subspace. Note that $\mathbb{P}$ is almost Archimedean, so we can use the following contraction result for the vector addition in $\mathbb{P}$.

We will need a basic contraction result for the weighted arithmetic and harmonic means on the cone $\mathbb{P}$. The following contraction result has already appeared in \cite{lawsonlim3}.
\begin{theorem}[Theorem 2.6 \cite{lawsonlim3}]
Let $C$ be an almost Archimedean cone in a real vector space $V$ such
that $C$ does not contain 0 and consists of only one part. Let $a,b\in C$. Then with respect to the Thompson metric the translation $\tau_a(x):=a+x$ restricted to $\{x\in C:x\leq b\}$ has Lipschitz constant $\frac{M(b/a)}{M(b/a)+1}$.
\end{theorem}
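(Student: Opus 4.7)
The plan is to reduce the claim to a scalar inequality by testing against positive linear functionals. Since $C$ is almost Archimedean and consists of a single part, a Hahn--Banach separation applied to the closure $\overline{C}$ yields the duality representation
\[
M(u/v)=\sup\bigl\{\phi(u)/\phi(v):\phi\in C^{*},\ \phi(v)>0\bigr\}
\]
for any $u,v\in C$ in the same part, where $C^{*}$ is the dual cone. The first step is to verify this carefully: for $\alpha<M(u/v)$ the vector $\alpha v-u$ lies outside $\overline{C}$, and a separating functional $\phi\in C^{*}$ delivers the required lower bound $\phi(u)/\phi(v)>\alpha$.

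Next I would establish the scalar contraction. Fixing $\phi$ with $\phi(a)>0$, set $a_\phi=\phi(a)$, $x_\phi=\phi(x)$, $y_\phi=\phi(y)$ and $m_\phi=\phi(b)/a_\phi\leq M(b/a)=:m$, so that $0<x_\phi,y_\phi\leq m_\phi a_\phi$. Assuming $r:=y_\phi/x_\phi\geq 1$, a one-variable maximization shows that $(a_\phi+y_\phi)/(a_\phi+x_\phi)$ is increasing in $y_\phi$ subject to the constraints, and attains its maximum $(1+m_\phi)r/(r+m_\phi)$ at $y_\phi=m_\phi a_\phi,\ x_\phi=m_\phi a_\phi/r$. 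The weighted AM--GM inequality $(r+m_\phi)/(m_\phi+1)\geq r^{1/(m_\phi+1)}$ then rearranges to
\[
\frac{(1+m_\phi)r}{r+m_\phi}\leq r^{m_\phi/(m_\phi+1)}.
\]
Because $s\mapsto s/(s+1)$ is increasing on $[0,\infty)$, $m_\phi/(m_\phi+1)\leq m/(m+1)$, and since $r\geq 1$ the bound persists with the uniform exponent $m/(m+1)$; the case $r<1$ is handled by exchanging the roles of $x_\phi$ and $y_\phi$.

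To finish, I would take the supremum over admissible $\phi$ of the pointwise bound
\[
\max\!\left\{\frac{\phi(a+x)}{\phi(a+y)},\frac{\phi(a+y)}{\phi(a+x)}\right\}\leq \left(\max\!\left\{\frac{\phi(x)}{\phi(y)},\frac{\phi(y)}{\phi(x)}\right\}\right)^{m/(m+1)}.
\]
The identity $\sup_\phi\max(f_\phi,g_\phi)=\max(\sup_\phi f_\phi,\sup_\phi g_\phi)$, together with monotonicity of $t\mapsto t^{m/(m+1)}$, converts this into
\[
e^{d_\infty(a+x,a+y)}\leq e^{(m/(m+1))d_\infty(x,y)},
\]
and taking logarithms yields the asserted Lipschitz estimate. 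The main obstacle will be the duality step: one must carefully justify the supremum representation of $M(u/v)$ from only the almost Archimedean hypothesis (so that $\overline{C}$ is a proper cone and standard separation arguments apply), and guarantee that the separating functionals produced lie in $C^{*}$ with $\phi(v)>0$, which is where the assumptions that $C$ is a single part and does not contain $0$ are essential.
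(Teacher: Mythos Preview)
The paper does not contain a proof of this statement: it is quoted as Theorem~2.6 of \cite{lawsonlim3} and used as a black box. Immediately afterwards the paper proves its own, weaker contraction estimate (Lemma~\ref{arithmeticcontraction}) by a direct Thompson-metric computation based on Proposition~\ref{weightedthompson}, with an explicitly worse constant. So there is no in-paper proof to compare your proposal against.

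That said, your argument is sound. The scalar reduction is correct: with $r=y_\phi/x_\phi\geq 1$ and the constraint $y_\phi\leq m_\phi a_\phi$, the ratio $(a_\phi+y_\phi)/(a_\phi+x_\phi)$ is indeed maximized at $y_\phi=m_\phi a_\phi$, giving $(1+m_\phi)r/(r+m_\phi)$, and the weighted AM--GM inequality $\tfrac{1}{m_\phi+1}r+\tfrac{m_\phi}{m_\phi+1}\cdot 1\geq r^{1/(m_\phi+1)}$ rearranges exactly to your bound. The passage from exponent $m_\phi/(m_\phi+1)$ to $m/(m+1)$ is legitimate since $r\geq 1$. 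For the duality step you flag, note that the single-part hypothesis already resolves the positivity concern: if $\phi\in C^{*}$ is not identically zero on $C$ and $\phi(u)>0$ for some $u\in C$, then for any $v\in C$ the relation $\alpha u\leq v$ for some $\alpha>0$ forces $\phi(v)\geq\alpha\phi(u)>0$, so every nonzero $\phi\in C^{*}$ is strictly positive on all of $C$ and the supremum in the representation of $M(u/v)$ is over a common set of functionals for all pairs involved. The only genuine work remaining is the separation itself; restricting to the finite-dimensional subspace spanned by $a,b,x,y$ (where the almost Archimedean hypothesis guarantees the closure of $C$ is a proper cone) and applying the finite-dimensional Hahn--Banach theorem suffices, after which the functional extends by zero.
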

Since $\mathbb{P}$ is almost Archimedean the above is applicable, however we state here our own contraction result below. Even though our Lipschitz constant is not as sharp as in the above, its proof will serve as a reference point for more involved calculations to come.

\begin{lemma}\label{arithmeticcontraction}
Let $a,b>0$ be real numbers. Then the mappings $h^+_{a,b,A}(B)=aA+bB$ and $h^-_{a,b,A}(B)=(aA^{-1}+bB^{-1})^{-1}$ are strict contractions on every $\overline{B}_A(r)$ for all $r<\infty$ i.e., for all $X,Y\in\overline{B}_A(r)$
\begin{equation*}
d_\infty(h^\pm_{a,b,A}(X),h^\pm_{a,b,A}(Y))\leq \rho d_\infty(X,Y)
\end{equation*}
where
\begin{equation*}
\rho=\frac{\log\frac{be^{3r}+a}{be^{r}+a}}{2r}
\end{equation*}
for a fixed $0<r<\infty$.
\end{lemma}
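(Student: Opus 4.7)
The plan is to apply Proposition~\ref{weightedthompson} to $h^+_{a,b,A}$ and reduce $h^-_{a,b,A}$ to this case by inversion. Taking $A_1 = X$, $B_1 = Y$, $A_2 = B_2 = A$, $c_1 = b$, $c_2 = a$ in the proposition (the hypothesis $d_\infty(X,Y) \geq d_\infty(A,A) = 0$ being automatic), one obtains
\begin{equation*}
e^{d_\infty(h^+_{a,b,A}(X),\, h^+_{a,b,A}(Y))} \leq \max_{Z\in\{X,Y\}} \frac{b e^{t} + a e^{-d_\infty(Z,A)}}{b + a e^{-d_\infty(Z,A)}}, \qquad t := d_\infty(X,Y).
\end{equation*}

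Next, I would verify that the scalar function $d\mapsto \frac{be^{t}+ae^{-d}}{b+ae^{-d}}$ is nondecreasing in $d \geq 0$; rewriting it as $e^{t} - \frac{ae^{-d}(e^{t}-1)}{b+ae^{-d}}$ makes this transparent. Since $X, Y \in \overline{B}_A(r)$ forces $d_\infty(X,A), d_\infty(Y,A) \leq r$ and $t \leq 2r$ by the triangle inequality, the maximum above is uniformly bounded by $\frac{b e^{t+r} + a}{b e^{r} + a}$.

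The analytic core is then the one-variable inequality $\log\frac{b e^{t+r} + a}{b e^{r} + a} \leq \rho\, t$ on $t\in[0,2r]$, where $\rho$ is precisely chosen to force equality at $t = 2r$. Setting $g(t) := \log(b e^{t+r} + a) - \log(b e^{r} + a)$, one checks $g(0) = 0$, $g(2r) = 2r\rho$, and that $g'(t) = 1 - \frac{a}{b e^{t+r} + a}$ is increasing, hence $g$ is convex. Convexity together with $g(0) = 0$ makes the difference quotient $g(t)/t$ nondecreasing, so $g(t)/t \leq g(2r)/(2r) = \rho$, which is the required bound and completes the proof for $h^+$. For $h^-_{a,b,A}$, Property 2 of Lemma~\ref{thompsonproperties} gives that $X \in \overline{B}_A(r)$ if and only if $X^{-1} \in \overline{B}_{A^{-1}}(r)$, and
\begin{equation*}
d_\infty(h^-_{a,b,A}(X),\, h^-_{a,b,A}(Y)) = d_\infty(a A^{-1} + b X^{-1},\, a A^{-1} + b Y^{-1}),
\end{equation*}
so applying the $h^+$-bound with $A,X,Y$ replaced by $A^{-1},X^{-1},Y^{-1}$ and invoking $d_\infty(X^{-1},Y^{-1}) = d_\infty(X,Y)$ closes the argument.

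The main obstacle is extracting the sharp, clean Lipschitz constant from the rather unwieldy four-distance bound in Proposition~\ref{weightedthompson}; once the monotonicity in $d$ and the convexity of $g$ are noticed, both $h^\pm$ follow in parallel.
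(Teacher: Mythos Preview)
Your proof is correct and follows essentially the same route as the paper's: both apply Proposition~\ref{weightedthompson} with $A_1=X$, $B_1=Y$, $A_2=B_2=A$, bound $d_\infty(Z,A)\leq r$ by monotonicity of the resulting fraction, and then show the one-variable inequality $\log\frac{be^{t+r}+a}{be^r+a}\leq\rho t$ on $[0,2r]$; the paper additionally reduces to $A=I$ via congruence invariance (harmless but unnecessary) and phrases the last step as concavity of $x^{\rho}$ rather than convexity of your $g$, which is the same argument in different clothing. Your reduction of $h^-$ to $h^+$ via inversion is identical to the paper's.
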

\begin{proof}
It suffices to prove the above for $h^+_{a,b,A}(B)$, since then the same follows for $h^-_{a,b,A}(B)$ by the inversion invariancy of the metric $d_\infty$: property (2) in Lemma~\ref{thompsonproperties}. Also by property (3) in Lemma~\ref{thompsonproperties} it is enough to prove for the case when $A=I$. Let $X,Y\in\overline{B}_I(r)$. By Proposition~\ref{weightedthompson} we have that
\begin{equation*}
\begin{split}
e^{d_\infty(h^+_{a,b,I}(X),h^+_{a,b,I}(Y))}\leq\max&\left\{\frac{ae^{-d_\infty(X,I)}+be^{d_\infty(X,Y)}}{ae^{-d_\infty(X,I)}+b},\right.\\
&\left.\frac{ae^{-d_\infty(Y,I)}+be^{d_\infty(X,Y)}}{ae^{-d_\infty(Y,I)}+b}\right\}
\end{split}
\end{equation*}
Since $X,Y\in\overline{B}_A(r)$ we have that
\begin{equation*}
e^{-r}I\leq X,Y\leq e^rI
\end{equation*}
which means that
\begin{equation*}
d_\infty(X,I),d_\infty(Y,I)\leq r.
\end{equation*}
Hence it follows that
\begin{equation*}
\frac{ae^{-d_\infty(Y,I)}}{ae^{-d_\infty(Y,I)}+b}=\frac{1}{1+e^{d_\infty(Y,I)}b/a}\geq \frac{1}{1+e^{r}b/a}=\frac{ae^{-r}}{ae^{-r}+b}
\end{equation*}
and similarly
\begin{equation*}
\frac{ae^{-d_\infty(X,I)}}{ae^{-d_\infty(X,I)}+b}\geq \frac{ae^{-r}}{ae^{-r}+b}.
\end{equation*}
Also
\begin{eqnarray*}
\frac{b}{ae^{-d_\infty(X,I)}+b}\leq \frac{b}{ae^{-r}+b}\\
\frac{b}{ae^{-d_\infty(Y,I)}+b}\leq \frac{b}{ae^{-r}+b},
\end{eqnarray*}
hence using (*) this means that
\begin{equation}\label{eq:usingstar}
e^{d_\infty(h^+_{a,b,I}(X),h^+_{a,b,I}(Y))}\leq\frac{ae^{-r}+be^{d_\infty(X,Y)}}{ae^{-r}+b}.
\end{equation}
So we seek $0<\rho<1$ such that
\begin{equation*}
d_\infty(h^+_{a,b,I}(X),h^+_{a,b,I}(Y))\leq\log\left(\frac{ae^{-r}+be^{d_\infty(X,Y)}}{ae^{-r}+b}\right)\leq \rho d_\infty(X,Y)
\end{equation*}
for all $X,Y\in\overline{B}_I(r)$ i.e., $d_\infty(X,Y)\leq 2r$. It therefore suffices to find the maximum of the function
\begin{equation*}
f(x)=\frac{\log\left(\frac{e^{x}be^{r}+a}{be^{r}+a}\right)}{x}
\end{equation*}
on the interval $[0,2r]$. First routine calculations show that 
\begin{equation*}
\lim_{x\to 0+}\frac{\log\left(\frac{e^{x}be^{r}+a}{be^{r}+a}\right)}{x}=\frac{be^{r}}{be^{r}+a}<1.
\end{equation*}
Then the maximization problem is the same as finding the smallest $\rho<1$ such that
\begin{equation*}
\log\left(\frac{xbe^{r}+a}{be^{r}+a}\right)\leq\rho\log(x)
\end{equation*}
on the transformed interval $[1,e^{2r}]$. But that is equivalent to
\begin{equation*}
\frac{xbe^{r}}{be^{r}+a}+\frac{a}{be^{r}+a}\leq x^{\rho}.
\end{equation*}
Since for $0<\rho<1$ the function $x^{\rho}$ is concave monotonically increasing, $1^{\rho}=1$ and its derivative is $\rho$ at $1$ therefore the smallest such $\rho$ is determined at the $e^{2r}$ endpoint of the interval, hence
\begin{equation*}
\rho=\frac{\log\left(\frac{e^{2r}be^{r}+a}{be^{r}+a}\right)}{2r}.
\end{equation*}
\end{proof}

\begin{remark}
By Proposition~\ref{weightedthompson} it is clear that the functions $h^\pm_{a,b,A}(X)$ for all $a,b\geq 0$ are nonexpansive on the whole $\mathbb{P}$ i.e.,
\begin{equation*}
d_\infty(h^\pm_{a,b,A}(X),h^\pm_{a,b,A}(Y))\leq d_\infty(X,Y).
\end{equation*}
\end{remark}

%

\begin{remark}
By Lemma~\ref{arithmeticcontraction} it follows that the weighted arithmetic $f(B):=(1-s)A+sB$ and harmonic $g(B):=((1-s)A^{-1}+sB^{-1})^{-1}$ means are strict contractions on $\overline{B}_A(r)$ for all $r<\infty$ and $s\in[0,1)$. The contraction coefficients $\rho$ are striclty less than $1$ for all $s\in(0,1)$, in general the $\rho$ calculated explicitly in the proof of Lemma~\ref{arithmeticcontraction} is a monotonically decreasing function in $s$ and $\rho\to 1$ as $s\to 1-$ also $\rho\to 0$ as $s\to 0+$. The case $s=1$ gives the right trivial mean $M(A,B)=B$ that is nonexpansive i.e.,
\begin{equation*}
d_\infty(M(A,X),M(A,Y))\leq d_\infty(X,Y),
\end{equation*}
while $s=0$ is the left trivial mean $M(A,B)=A$ and it has contraction coefficient $0$ on all of $\mathbb{P}$.
\end{remark}

These preliminary results yield the following contraction result.

\begin{theorem}\label{contract}
Let $M\in\mathfrak{M}$ and $f(X)=M(A,X)$. If $M$ is not the right trivial mean (i.e., $M(A,B)\neq B$) then the mapping $f(X)$ is a strict contraction on $\overline{B}_A(r)$ for all $r<\infty$ i.e., there exists $0<\rho_r<1$ such that
\begin{equation*}
d_\infty(f(X),f(Y))\leq\rho_r d_\infty(X,Y)
\end{equation*}
for all $X,Y\in\overline{B}_A(r)$.

If $M$ is the right trivial mean (i.e., $M(A,B)=B$) then $f(X)$ is nonexpansive on $\mathbb{P}$, that is
\begin{equation*}
d_\infty(f(X),f(Y))\leq d_\infty(X,Y)
\end{equation*}
for all $A,X,Y\in\mathbb{P}$.
\end{theorem}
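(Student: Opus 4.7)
By Proposition~\ref{harmonicreprprop} there is a unique probability measure $\nu$ on $[0,1]$ with
$$M(A,X)=\int_{[0,1]}H_s(X)\,d\nu(s),\qquad H_s(X):=[(1-s)A^{-1}+sX^{-1}]^{-1}.$$
The right trivial mean occurs precisely when $\nu=\delta_1$, in which case $f(X)=X$ and nonexpansivity is automatic. From here on assume $\nu\neq\delta_1$, so $\nu([0,1))>0$. Fix $\epsilon\in(0,1)$ with $p:=\nu([0,1-\epsilon])>0$, put $q:=1-p\in[0,1)$, and decompose
$$M(A,X)=pM_1(A,X)+qM_2(A,X),$$
where $M_1,M_2\in\mathfrak{M}$ are the operator means represented by normalizing $\nu$ on $[0,1-\epsilon]$ and $(1-\epsilon,1]$ respectively (dropping $M_2$ if $q=0$).

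Combining Lemma~\ref{L:infinite4} applied to $s\mapsto H_s(X)$ with the contraction rates $\rho_{s,r}$ of Lemma~\ref{arithmeticcontraction} gives, for $X,Y\in\overline{B}_A(r)$,
$$d_\infty(M_1(A,X),M_1(A,Y))\leq \rho_1\, d_\infty(X,Y),\qquad \rho_1:=\rho_{1-\epsilon,r}<1,$$
once one notes that $\rho_{s,r}$ is increasing in $s$ on $[0,1)$. The same recipe shows $M_2$ is nonexpansive on $\overline{B}_A(r)$. Also each $H_s$ preserves the order interval $\overline{B}_A(r)=[e^{-r}A,e^{r}A]$, whence so do $M_1,M_2$, and in particular $d_\infty(M_i(A,Z),M_j(A,Z))\leq 2r$ for $Z\in\overline{B}_A(r)$.

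If $q=0$ we are done with rate $\rho_1$. Otherwise, writing $D:=d_\infty(X,Y)\in(0,2r]$, I invoke Proposition~\ref{weightedthompson} for the pairs $(M_1(A,X),M_1(A,Y))$ and $(M_2(A,X),M_2(A,Y))$ with weights $p,q$. If $M_1$'s distance dominates, the Proposition's bound collapses to $e^{d_\infty(f(X),f(Y))}\leq e^{\rho_1 D}$ directly. Otherwise the bound is monotone decreasing in the auxiliary factor $\gamma:=e^{-d_\infty(M_i(A,Z),M_j(A,Z))}\in[e^{-2r},1]$ (elementary derivative check using $\rho_1<1$); evaluating at the worst case $\gamma=e^{-2r}$ yields
$$e^{d_\infty(f(X),f(Y))}\leq \psi(D):=\frac{q\,e^{D}+p\,e^{-2r}e^{\rho_1 D}}{q+p\,e^{-2r}}.$$
One verifies directly that $\psi(D)<e^D$ for all $D>0$ and that $D\mapsto\log\psi(D)/D$ extends continuously to $[0,2r]$ with values strictly below $1$, so its maximum $\rho_r$ on this compact interval is $<1$ and serves as a uniform strict contraction constant.

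The main obstacle is the case $\nu\neq\delta_1$ with $1\in\supp(\nu)$: the naive sup-bound via Lemma~\ref{L:infinite4} degenerates since $\rho_{s,r}\to 1$ as $s\to 1$, so one must genuinely exploit the convex-combination structure by averaging the strictly contracting bulk on $[0,1-\epsilon]$ against the nonexpansive tail through the refined weighted estimate of Proposition~\ref{weightedthompson}.
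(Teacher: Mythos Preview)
Your proposal is correct and follows essentially the same route as the paper: represent $M$ via Proposition~\ref{harmonicreprprop}, split the representing measure into a bulk on $[0,1-\epsilon]$ (strictly contracting by Lemma~\ref{arithmeticcontraction} and Lemma~\ref{L:infinite4}) and a tail near $1$ (nonexpansive), then combine via the weighted Thompson estimate Proposition~\ref{weightedthompson} and optimize. The only cosmetic differences are that you normalize the two pieces into genuine operator means (giving the clean cross-distance bound $2r$ in place of the paper's $L$) and conclude by compactness of $[0,2r]$ rather than computing $\rho_r$ explicitly.
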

\begin{proof}
The case of the right trivial mean is just the preceding remark, so assume that $M(A,B)$ is not the right trivial mean. Again by property 3 in Lemma~\ref{thompsonproperties} it is enough to prove for the case when $A=I$. By Proposition~\ref{harmonicreprprop} the mean $M\in\mathfrak{M}$ is represented as
\begin{equation}\label{contractint}
M(I,X)=\int_{[0,1]}[(1-s)I+sX^{-1}]^{-1}d\nu(s)=\int_{[0,1]}h_s(X)d\nu(s)
\end{equation}
So let $X,Y\in\overline{B}_I(r)$. There are other simple cases when the probability measure is supported only over the two points $\{0\},\{1\}$. These cases include the weighted arithmetic mean with $s\in(0,1)$ and the case of the left trivial mean which is covered in the preceding remark and are clearly strict contractions on $\overline{B}_I(r)$.

For the remaining cases we split the integral in \eqref{contractint} to the sum of integrals over the mutually disjoint intervals $I_1=[0,a]$, $I_2=(a,1]$ for some $a\in(0,1)$ such that $\nu$ has nonzero mass on the interval $I_1$. Such an $a$ clearly exists since we have excluded the case when the measure $\nu$ is supported only on the point $\{1\}$. We have that
\begin{equation*}
f_{i}(X)=\int_{I_i}[(1-s)I+sX^{-1}]^{-1}d\nu(s).
\end{equation*}
Notice that the integrals with respect to the measures defining $f_i$ are finite positive measures, so with appropriate rescaling we can consider the integration with respect to a probability measure times a constant. Also the interval $[0,1]$ is a compact Hausdorff space and the function $h_s(X)$ mapping to $\mathbb{P}$ is continuous with respect to $s\in[0,1]$, so Theorem 3.27 in \cite{rudin} applies and $h_s(X)$ is integrable and therefore property (4') also applies for the metric $d_\infty$. So the functions $f_{1}$ and $f_2$ are nonexpansive due to the preceding remarks i.e., we have
\begin{equation*}
d_\infty(f_{i}(X),f_{i}(Y))\leq \rho_i d_\infty(X,Y)
\end{equation*}
where $\rho_2=1$. Again due to property (4'), the preceding remarks and Lemma~\ref{arithmeticcontraction} we have that
\begin{equation*}
\rho_1=\frac{\log\left(\frac{e^{2r}be^{r}+a}{be^{r}+a}\right)}{2r}
\end{equation*}
since it is easy to see that by Lemma~\ref{arithmeticcontraction} the contraction coefficient $\rho$ calculated in Lemma~\ref{arithmeticcontraction} corresponding to a $h_s(X)$ with $s\in[0,a]$ is bounded from above by the contraction coefficient $\rho$ calculated in Lemma~\ref{arithmeticcontraction} corresponding to $h_a(X)$.

Now by Proposition~\ref{weightedthompson} we have
\begin{equation*}
\begin{split}
e^{d_\infty(f(X),f(Y))}&\leq\max\left\{\frac{\sum_{i=1}^{2}e^{d_\infty(f_i(X),f_i(Y))}e^{-d_\infty(f_m(X),f_i(X))}}{\sum_{i=1}^{2}e^{-d_\infty(f_m(X),f_i(X))}},\right.\\
&\left.\frac{\sum_{i=1}^{2}e^{d_\infty(f_i(X),f_i(Y))}e^{-d_\infty(f_m(Y),f_i(Y))}}{\sum_{i=1}^{2}e^{-d_\infty(f_m(Y),f_i(Y))}}\right\}\\
&\leq\max\left\{\frac{\sum_{i=1}^{2}e^{\rho_id_\infty(X,Y)}e^{-d_\infty(f_m(X),f_i(X))}}{\sum_{i=1}^{2}e^{-d_\infty(f_m(X),f_i(X))}},\right.\\
&\left.\frac{\sum_{i=1}^{2}e^{\rho_id_\infty(X,Y)}e^{-d_\infty(f_m(Y),f_i(Y))}}{\sum_{i=1}^{2}e^{-d_\infty(f_m(Y),f_i(Y))}}\right\}
\end{split}
\end{equation*}
where $m$ is such that $d_\infty(f_m(X),f_m(Y))\geq d_\infty(f_i(X),f_i(Y))$, $i=1,2$. To obtain the second inequality above we used the monotonicity of the functions $e^x$ and the weighted arithmetic mean with weights of the form $w_i=e^{-d_\infty(f_m(Y),f_i(Y))}$. The next step is to see that $d_\infty(f_j(Y),f_i(Y))$ is bounded for $i,j=1,2$. We have that $Y\in\overline{B}_I(r)$, hence we have
\begin{eqnarray*}
e^{-r}I\leq Y\leq e^rI
\end{eqnarray*}
and by the monotonicity of the harmonic mean this yields
\begin{eqnarray*}
e^{-r}I\leq [(1-s)I+sY^{-1}]^{-1}\leq e^rI.
\end{eqnarray*}
Integrating this we get
\begin{eqnarray*}
\int_{I_i}d\nu(s)e^{-r}I\leq f_i(Y)\leq \int_{I_i}d\nu(s)e^rI
\end{eqnarray*}
for $i=1,2.$ From this it follows that
\begin{equation*}
\max_{i,j=1,2}\sup_{X\in\overline{B}_I(r)}d_\infty(f_j(X),f_i(X))\leq L
\end{equation*}
where
$$L=2r+\left|\log\left(\int_{I_1}d\nu(s)\right)-\log\left(\int_{I_2}d\nu(s)\right)\right|$$
and clearly $L<\infty$ since $\int_{I_i}d\nu(s)>0$. Now since $\rho_2=1$ we have that
\begin{equation*}
\begin{split}
e^{d_\infty(f(X),f(Y))}&\leq\max\left\{\frac{\sum_{i=1}^{2}e^{\rho_id_\infty(X,Y)}e^{-d_\infty(f_m(X),f_i(X))}}{\sum_{i=1}^{2}e^{-d_\infty(f_m(X),f_i(X))}},\right.\\
&\left.\frac{\sum_{i=1}^{2}e^{\rho_id_\infty(X,Y)}e^{-d_\infty(f_m(Y),f_i(Y))}}{\sum_{i=1}^{2}e^{-d_\infty(f_m(Y),f_i(Y))}}\right\}\\
&\overset{(*)}{\leq}\frac{e^{d_\infty(X,Y)}+e^{\rho_1d_\infty(X,Y)}e^{-L}}{e^{-L}+1},
\end{split}
\end{equation*}
where to get the last inequality we used a similar argument as we did to obtain \eqref{eq:usingstar}. Similarly as in the end of the proof of Lemma~\ref{arithmeticcontraction} we seek some $0<\rho<1$ such that
\begin{equation*}
\log\left(\frac{e^{d_\infty(X,Y)}+e^{\rho_1d_\infty(X,Y)}e^{-L}}{e^{-L}+1}\right)\leq \rho d_\infty(X,Y)
\end{equation*}
for all $X,Y\in\overline{B}_I(r)$ i.e., $d_\infty(X,Y)\leq 2r$. By the same argument as in the end of the proof of Lemma~\ref{arithmeticcontraction} we see that
\begin{equation*}
\rho=\frac{\log\left(\frac{e^{2r}+e^{\rho_12r}e^{-L}}{e^{-L}+1}\right)}{2r}
\end{equation*}
suffices and clearly $\rho<1$.

\end{proof}

\begin{remark}
In \cite{lawsonlim0} Lawson and Lim provided an extension of the geometric, logarithmic and some other iterated means to several variables over $\mathbb{P}$ relying on the Ando-Li-Mathias construction provided in \cite{ando}. They established the above contractive property for these means. Our Theorem~\ref{contract} shows that in fact the construction is applicable to all operator means due to the contraction result Theorem~\ref{contract}, hence providing multivariable extensions which work in the possibly infinite dimensional setting of $\mathbb{P}$. This was only known in the finite dimensional setting so far which case was proved in \cite{palfia3}.
\end{remark}

The further importance of Theorem~\ref{contract} will be apparent in the following sections, when we consider matrix (in fact operator) equations similarly to the case of the matrix power means. We close the section with a general nonexpansive property.
\begin{proposition}\label{nonexpansivemean}
Let $M:\mathbb{P}^k\to \mathbb{P}$ be such that
\begin{enumerate}
	\item if $A_i\leq B_i$ for all $1\leq i\leq k$, then $M(A_1,\ldots,A_k)\leq M(B_1,\ldots,B_k)$,
	\item if $t>0$, then $M(tA_1,\ldots,tA_k)=tM(A_1,\ldots,A_k)$,
\end{enumerate}
then
\begin{equation*}
d_\infty(M(A_1,\ldots,A_k),M(B_1,\ldots,B_k))\leq \max_{1\leq i\leq k}d_\infty(A_i,B_i)
\end{equation*}
for all $A_i,B_i\in\mathbb{P}$.
\end{proposition}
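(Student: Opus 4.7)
The proof is a direct application of the order-theoretic description of the Thompson metric together with the two hypotheses on $M$. Let me sketch the steps.

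Set $r := \max_{1\leq i\leq k} d_\infty(A_i,B_i)$. The plan is to translate each pairwise distance bound $d_\infty(A_i,B_i)\leq r$ into a sandwich inequality in the positive definite order using property (5) of Lemma~\ref{thompsonproperties}, then push this sandwich through $M$ using properties (1) and (2), and finally read off a Thompson bound from the resulting sandwich on $M(A_1,\ldots,A_k)$ versus $M(B_1,\ldots,B_k)$.

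More concretely, from Lemma~\ref{thompsonproperties}(5) applied coordinatewise we obtain
\begin{equation*}
e^{-r}B_i \;\leq\; A_i \;\leq\; e^{r}B_i \qquad (1\leq i\leq k).
\end{equation*}
Monotonicity (hypothesis (1)) then yields
\begin{equation*}
M(e^{-r}B_1,\ldots,e^{-r}B_k) \;\leq\; M(A_1,\ldots,A_k) \;\leq\; M(e^{r}B_1,\ldots,e^{r}B_k),
\end{equation*}
and positive homogeneity (hypothesis (2)) collapses the two outer terms, giving
\begin{equation*}
e^{-r}\, M(B_1,\ldots,B_k) \;\leq\; M(A_1,\ldots,A_k) \;\leq\; e^{r}\, M(B_1,\ldots,B_k).
\end{equation*}
By the definition of $d_\infty$ in terms of the infimal scalars realizing such a sandwich, this is precisely the statement $d_\infty(M(A_1,\ldots,A_k),M(B_1,\ldots,B_k))\leq r$, which is the desired inequality.

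There is essentially no obstacle here; the only point to verify is that hypothesis (2) is applied with the scalar $t=e^{\pm r}>0$, which is legal, and that the two-sided order inequality translates back to a Thompson distance bound, which is the content of the definition $M(A/B)=\inf\{\alpha:A\leq\alpha B\}$ combined with its dual form. Note that this proposition subsumes the nonexpansiveness of every Kubo--Ando mean on $\mathbb{P}$ (take $k=2$ and $M$ an operator mean), and more generally of any multivariable construction satisfying monotonicity and positive homogeneity, which is exactly the setting relevant to the generalized matrix power means built in the next section.
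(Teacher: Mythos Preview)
Your proof is correct and follows essentially the same approach as the paper: translate the Thompson distance bounds into order sandwiches, push them through $M$ via monotonicity and positive homogeneity, and read off the Thompson bound on the images. Your version is in fact slightly cleaner notationally, since you carry the scalar $e^{r}$ explicitly, whereas the paper's proof writes $t=\max_i d_\infty(A_i,B_i)$ and then $A_i\leq tB_i$, which should of course be $A_i\leq e^{t}B_i$.
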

\begin{proof}
Let $t=\max_{1\leq i\leq k}d_\infty(A_i,B_i)$. Then $A_i\leq tB_i$ and $B_i\leq tA_i$ for all $1\leq i\leq k$, so by property 1 and 2
\begin{equation*}
\begin{split}
M(A_1,\ldots,A_k)\leq M(tB_1,\ldots,tB_k)=tM(B_1,\ldots,B_k)\\
M(B_1,\ldots,B_k)\leq M(tA_1,\ldots,tA_k)=tM(A_1,\ldots,A_k)
\end{split}
\end{equation*}
i.e., 
\begin{equation*}
\begin{split}
M(A_1,\ldots,A_k)\leq \max_{1\leq i\leq k}d_\infty(A_i,B_i)M(B_1,\ldots,B_k)\\
M(B_1,\ldots,B_k)\leq \max_{1\leq i\leq k}d_\infty(A_i,B_i)M(A_1,\ldots,A_k).
\end{split}
\end{equation*}

\end{proof}

\section{Generalized operator means via contraction principle}

In \cite{limpalfia} Lim and the author defined the one parameter family of matrix power means $P_s(\omega;{\Bbb A})$ as the unique positive definite solution of the equation
\begin{equation}\label{powermeanequ2}
X=\sum_{i=1}^{k}w_{i}X\#_sA_i
\end{equation}
where $s\in[-1,1], w_i>0, \sum_{i=1}^kw_i=1$ and $A_i\in\mathbb{P}$ and
\begin{equation*}
A\#_sB=A^{1/2}\left(A^{-1/2}BA^{-1/2}\right)^{s}A^{1/2}
\end{equation*}
is again the weighted geometric mean. Existence and uniqueness of the solution of \eqref{powermeanequ2} follow from the fact that the function
\begin{equation*}
f(X)=\sum_{i=1}^{k}w_{i}X\#_sA_i
\end{equation*}
is a strict contraction for $s\in[-1,1],s\neq 0$ with respect to Thompson's part metric \cite{limpalfia}.

Consider the following one-parameter family of real functions:
\begin{equation}\label{principalf}
f_{s,t}(x)=\frac{[(1-t)(1-s)+t]x+s(1-t)}{(1-t)(1-s)x+t+s(1-t)}
\end{equation}
for $t,s\in[0,1]$. By simple calculation one finds
\begin{equation}\label{eq:principalrepr}
\begin{split}
f_{s,t}(x)&=\frac{1}{t+s(1-t)}\left\{s(1-t)+\frac{t}{(1-t)(1-s)+[t+s(1-t)]x^{-1}}\right\}\\
&=\frac{1}{t+s(1-t)}\left[s(1-t)+th_{t+s(1-t)}(x)\right].
\end{split}
\end{equation}
Similarly we have for the inverse function $f_{s,t}^{-1}$ that
\begin{equation}\label{eq:principalreprinv}
f_{s,t}^{-1}(x)=\frac{[1-(1-t)s]x-(1-t)}{-(1-t)s(1-s)x+s+(1-s)t}.
\end{equation}

\begin{lemma}\label{L:principalf}
The real functions $f_{s,t}(x)$ for any fixed $t,s\in[0,1]$ defined by \eqref{principalf} are operator monotone and positive (as functions of $x$) for all $x\in(0,\infty)$.
\end{lemma}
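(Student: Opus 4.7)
The plan is to read off the conclusion directly from the identity \eqref{eq:principalrepr}. Specifically, I would first verify (by a one-line algebraic check starting from \eqref{principalf}) that for $t+s(1-t)>0$ one has
\begin{equation*}
f_{s,t}(x) = \frac{s(1-t)}{t+s(1-t)}\cdot 1 + \frac{t}{t+s(1-t)}\cdot h_{t+s(1-t)}(x),
\end{equation*}
where $h_u(x)=[(1-u)+ux^{-1}]^{-1}$. The two coefficients are non-negative and sum to $1$, so $f_{s,t}$ is an honest convex combination of the two real functions $x\mapsto 1$ and $x\mapsto h_{t+s(1-t)}(x)$.

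Next I would observe that each of these two functions is positive and operator monotone on $(0,\infty)$. The constant function $1$ is trivially both. The function $h_u$ with $u\in[0,1]$ is the representing function of the weighted harmonic mean, and hence lies in $\mathfrak{m}$ by Proposition~\ref{harmonicreprprop} (alternatively, one notes that $x\mapsto x^{-1}$ is operator monotone decreasing on $\mathbb{P}$, so the composition $[(1-u)+ux^{-1}]^{-1}$ is operator monotone and positive). Since positivity and operator monotonicity on $(0,\infty)$ are preserved under non-negative linear combinations, $f_{s,t}$ is positive and operator monotone whenever $t+s(1-t)>0$.

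The only remaining case is the degenerate one $t+s(1-t)=0$, which forces $t=0$ and $s=0$. Substituting directly into \eqref{principalf} yields $f_{0,0}(x)=x/x=1$, which is again positive and operator monotone. This completes the verification in all cases.

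There is no real obstacle: the argument is essentially a restatement of the decomposition \eqref{eq:principalrepr} together with the standard fact that $\mathfrak{m}$ is closed under convex combinations. In the later sections the specific form of this convex combination (a constant plus a harmonic-mean representing function) will presumably be exploited to apply the contraction results of Section 4 to the fixed-point equation associated with $f_{s,t}$, but for the present lemma only the qualitative properties are needed.
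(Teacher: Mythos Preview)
Your proof is correct and coincides with the alternative argument the paper itself sketches: the paper's primary proof invokes the Möbius criterion (namely that $\frac{ax+b}{cx+d}$ is operator monotone when $ad-bc>0$), but it immediately remarks that one can instead read off the result from \eqref{eq:principalrepr} as a convex combination of operator monotone functions, exactly as you do. Your explicit treatment of the degenerate case $t=s=0$ is a nice touch that the paper leaves implicit.
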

\begin{proof}
Positivity of $f_{s,t}$ is clear, operator monotonicity follows from the fact that the functions
$$\frac{ax+b}{cx+d}$$
are operator monotone for real $x\neq -\frac{d}{c}$ if $ad-bc>0$, see for example \cite{bendatsherman}.

Alternatively one can directly arrive at the conclusion for $f_{s,t}$ by looking at \eqref{eq:principalrepr}, so that $f_{s,t}$ is a convex combination of two operator monotone functions.
\end{proof}

Due to Lemma~\ref{L:principalf} the functions $f_{s,t}\in\mathfrak{m}$ are representing functions of operator means $M_{s,t}(A,B)$ in $\mathfrak{M}$:
\begin{equation}\label{principalmean}
M_{s,t}(A,B):=A^{1/2}f_{s,t}\left(A^{-1/2}BA^{-1/2}\right)A^{1/2}.
\end{equation}

There is an important representation that holds for $f_{s,t}$.

\begin{proposition}\label{P:reprprincipalf}
For all $s,t\in[0,1]$ we have
\begin{equation*}
f_{s,t}(x)=\l_s^{-1}\left(t\l_s(x)\right),
\end{equation*}
where $\l_s(x):=\frac{x-1}{(1-s)x+s}$ and $\l_s^{-1}$ is its inverse function. Moreover $f_{s,t}\in\mathfrak{m}(t)$.
\end{proposition}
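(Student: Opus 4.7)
The plan is a direct algebraic verification followed by a short check of the normalization conditions. First I would compute the inverse $\ell_s^{-1}$ explicitly: setting $y=\ell_s(x)=\frac{x-1}{(1-s)x+s}$ and solving the linear equation $y[(1-s)x+s]=x-1$ for $x$ gives
\begin{equation*}
\ell_s^{-1}(y)=\frac{1+sy}{1-(1-s)y}.
\end{equation*}

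Next, I would substitute $t\ell_s(x)$ into this formula and simplify. Multiplying the numerator and denominator of $\ell_s^{-1}(t\ell_s(x))$ by $(1-s)x+s$ to clear the inner fraction yields
\begin{equation*}
\ell_s^{-1}(t\ell_s(x))=\frac{[(1-s)x+s]+st(x-1)}{[(1-s)x+s]-(1-s)t(x-1)}.
\end{equation*}
The numerator collects to $[(1-s)(1-t)+t]x+s(1-t)$, using $(1-s)+st=(1-s)(1-t)+t$, and the denominator collects to $(1-s)(1-t)x+t+s(1-t)$. This is exactly the defining expression \eqref{principalf} for $f_{s,t}(x)$, which proves the first assertion.

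For $f_{s,t}\in\mathfrak{m}(t)$, the positivity and operator monotonicity have already been established in Lemma~\ref{L:principalf}, so it only remains to check the normalization. Since $\ell_s(1)=0$ and $\ell_s^{-1}(0)=1$, the identity $f_{s,t}(x)=\ell_s^{-1}(t\ell_s(x))$ gives $f_{s,t}(1)=1$. For the derivative at $1$, the chain rule yields
\begin{equation*}
f_{s,t}'(1)=t\,\ell_s'(1)\cdot(\ell_s^{-1})'(0)=\frac{t\,\ell_s'(1)}{\ell_s'(1)}=t,
\end{equation*}
and a direct computation gives $\ell_s'(x)=[(1-s)x+s]^{-2}$, so in particular $\ell_s'(1)=1$. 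This verifies $f_{s,t}\in\mathfrak{m}(t)$.

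There is no real obstacle here; the statement is essentially a change-of-variables identity for the Möbius family $\ell_s$, so the only issue is keeping track of the algebra when inverting and composing. The mild content is the recognition that $\ell_s\in\mathfrak{L}$ (so $\ell_s(1)=0$, $\ell_s'(1)=1$), which makes the normalization $f_{s,t}(1)=1$, $f_{s,t}'(1)=t$ fall out immediately once the composed expression has been identified with \eqref{principalf}.
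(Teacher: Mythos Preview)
Your proof is correct and is precisely the ``simple computation'' the paper invokes as its entire proof; you have merely written out the details of the Möbius inversion and the normalization check that the paper leaves to the reader.
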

\begin{proof}
By simple computation.
\end{proof}

\begin{remark}
Notice that the functions $\l_s$ are the ones that occur in the integral representation given in Corollary~\ref{C:intLrepr} for the functions in $\mathfrak{L}$. This basic observation will be of fundamental importance for us.
\end{remark}

There are some other basic properties that follow from Proposition~\ref{P:reprprincipalf}. For example we have a semi-group property for the functions $f_{s,t}$:
\begin{equation}\label{semigrpproperty}
f_{s,t_1}\circ f_{s,t_2}(x)=f_{s,t_1t_2}(x).
\end{equation}

We will make use of the operator mean corresponding to the transpose of $f_{s,t}(x)$ i.e.,
\begin{equation*}
\begin{split}
xf_{s,t}(1/x)&=\frac{1}{t+s(1-t)}\left\{s(1-t)x+\frac{t}{(1-t)(1-s)x^{-1}+[t+s(1-t)]}\right\}\\
&=\frac{1}{t+s(1-t)}\left[s(1-t)x+th_{(1-t)(1-s)}(x)\right].
\end{split}
\end{equation*}

\begin{proposition}\label{P:principalcontr}
Let $f(X):=M_{s,t}(X,A)$ with $s\in[0,1]$, $t\in(0,1]$ arbitrary. Then the mapping $f(X)$ is a strict contraction on $\overline{B}_A(r)$ for all $r<\infty$ with contraction coefficient
\begin{equation*}
\rho=\rho_1+\frac{\log\frac{be^{-r}+ae^{2r(1-\rho_1)}}{be^{-r}+a}}{2r},
\end{equation*}
where $\rho_1=\frac{\log\frac{e^{3r}(1-t)+t}{e^{r}(1-t)+t}}{2r}$ and $a=\frac{s(1-t)}{t+s(1-t)}$, $b=\frac{t}{t+s(1-t)}$.
\end{proposition}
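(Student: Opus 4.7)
The plan is to exploit the representation \eqref{eq:principalrepr}, which writes $f_{s,t}(x) = a + b\, h_c(x)$ with $c = t + s(1-t)$ and $a + b = 1$. Passing through the functional calculus this decomposes
\begin{equation*}
M_{s,t}(X,A) = a\,X + b\,G(X), \qquad G(X) := \bigl((1-c)X^{-1} + cA^{-1}\bigr)^{-1},
\end{equation*}
where $G$ is the weighted harmonic Kubo--Ando mean of $X$ and $A$. The proof then proceeds along the template of Theorem~\ref{contract}: (i) bound the contraction constant of $G$, (ii) bound the displacement $d_\infty(X, G(X))$ uniformly for $X \in \overline{B}_A(r)$, and (iii) assemble these via Proposition~\ref{weightedthompson} to control $d_\infty(f(X),f(Y))$.

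\textbf{Ingredients.} For (i), $G$ is of the form $h^-_{c,1-c,A}$ of Lemma~\ref{arithmeticcontraction}, which directly yields the contraction coefficient $\rho_G = (2r)^{-1}\log\bigl[((1-c)e^{3r}+c)/((1-c)e^r+c)\bigr]$. A short derivative computation shows that the map $\beta \mapsto \log\bigl[(\beta e^{3r}+1-\beta)/(\beta e^r+1-\beta)\bigr]$ is strictly increasing on $[0,1]$; since $1-c=(1-t)(1-s) \leq 1-t$ this gives $\rho_G \leq \rho_1$, and moreover $\rho_1 < 1$ whenever $t > 0$. For (ii), I observe that $G(X) = M(X,A)$ for the harmonic operator mean $M\in\mathfrak{M}$, which is monotone and positively homogeneous, so Proposition~\ref{nonexpansivemean} applied with $A_1=B_1=X$, $A_2=X$, $B_2=A$ gives
\begin{equation*}
d_\infty(X, G(X)) = d_\infty\bigl(M(X,X),\, M(X,A)\bigr) \leq d_\infty(X,A) \leq r
\end{equation*}
for every $X \in \overline{B}_A(r)$, and likewise for $Y$.

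\textbf{Combination and sharpening.} To complete (iii), I apply Proposition~\ref{weightedthompson} with $c_1=a$, $c_2=b$, $A_1=X$, $B_1=Y$, $A_2=G(X)$, $B_2=G(Y)$; the ordering hypothesis $d_\infty(A_1,B_1)\geq d_\infty(A_2,B_2)$ follows from $\rho_G\leq 1$. Writing $u := d_\infty(X,Y) \in [0,2r]$, substituting $d_\infty(G(X),G(Y)) \leq \rho_1 u$ and using the uniform bound from step (ii), a one-line monotonicity check on the scalar function $\phi(v) = (ae^u + be^{-v}e^{\rho_1 u})/(a+be^{-v})$ (its $v$-derivative is proportional to $e^u - e^{\rho_1 u} \geq 0$) permits replacing the actual displacement by the worst case $v = r$, yielding
\begin{equation*}
e^{d_\infty(f(X),f(Y))} \leq \frac{a\,e^u + b\,e^{-r}e^{\rho_1 u}}{a + b\,e^{-r}}.
\end{equation*}
The right-hand side is a convex combination of $e^u$ and $e^{\rho_1 u}$, so its logarithm is a log-sum-exp, hence convex in $u$ and vanishing at $u = 0$; consequently $\log(\,\cdot\,)/u$ is non-decreasing on $(0,2r]$ and the sharp choice of $\rho$ is attained at the endpoint $u=2r$. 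This produces $\rho = (2r)^{-1}\log\bigl[(ae^{2r} + be^{-r}e^{2r\rho_1})/(a + be^{-r})\bigr]$, which rearranges into the stated formula by factoring $e^{2r\rho_1}$ out of the denominator against the numerator. Strictness $\rho < 1$ is equivalent to $e^{2r\rho_1} < e^{2r}$, which holds precisely because $t > 0$. The main delicate point is not any single step but keeping the two monotonicity checks ($\rho_G \leq \rho_1$ and $\phi(v)\uparrow$) together with the log-sum-exp convexity lined up in the right direction; each is elementary but easy to invert.
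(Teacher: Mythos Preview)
Your proof is correct and follows essentially the same route as the paper: the identical decomposition $M_{s,t}(X,A)=aX+bG(X)$ with $G(X)=[(1-c)X^{-1}+cA^{-1}]^{-1}$, the same bound $\rho_G\le\rho_1$ via monotonicity in $1-c\le 1-t$, the same displacement bound $d_\infty(X,G(X))\le r$, and the same use of Proposition~\ref{weightedthompson} followed by optimizing over $u\in[0,2r]$. The only cosmetic differences are that you obtain the displacement bound via Proposition~\ref{nonexpansivemean} (the paper computes it directly with norm estimates), and you handle the final endpoint argument through log-sum-exp convexity rather than the paper's substitution $y=x^{1-\rho_1}$ reducing to the pattern of Lemma~\ref{arithmeticcontraction}.
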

\begin{proof}
Consider the functions
\begin{eqnarray*}
f_1(X)&:=&\left\{(1-t)(1-s)X^{-1}+[t+s(1-t)]A^{-1}\right\}^{-1}\\
f_2(X)&:=&X.
\end{eqnarray*}
Then $M_{s,t}(X,A)=\frac{1}{t+s(1-t)}[tf_1(X)+s(1-t)f_2(X)]$ i.e., a convex combination. Now use Lemma~\ref{arithmeticcontraction} to conclude that $f_1$ is a strict contraction on $\overline{B}_A(r)$ with contraction coefficient
$$\rho_1=\frac{\log\frac{e^{3r}(1-t)(1-s)+t+s(1-t)}{e^{r}(1-t)(1-s)+t+s(1-t)}}{2r}\overset{(*)}{\leq}\frac{\log\frac{e^{3r}(1-t)+t}{e^{r}(1-t)+t}}{2r}.$$
The other function $f_2$ is nonexpansive. Now we have
\begin{eqnarray*}
d_\infty(f_1(X),f_2(X))&=&d_\infty\left(\left\{(1-t)(1-s)X^{-1}+[t+s(1-t)]A^{-1}\right\}^{-1},X\right)\\
&=&d_\infty\left((1-t)(1-s)I+[t+s(1-t)]X^{1/2}A^{-1}X^{1/2},I\right)\\
&\leq& d_\infty(X,A),
\end{eqnarray*}
where the last inequality follows from
\begin{equation*}
\begin{split}
&e^{d_\infty\left((1-t)(1-s)I+[t+s(1-t)]X^{1/2}A^{-1}X^{1/2},I\right)}\leq\\
&\max\left\{\|(1-t)(1-s)I+[t+s(1-t)]X^{1/2}A^{-1}X^{1/2}\|_\infty,\right.\\
&\left.\|(1-t)(1-s)I+[t+s(1-t)]X^{-1/2}AX^{-1/2}\|_\infty\right\}\\
&\leq(1-t)(1-s)+[t+s(1-t)]e^{d_\infty(X,A)}\\
&\leq e^{d_\infty(X,A)}\\
&\leq e^r.
\end{split}
\end{equation*}
This means that $d_\infty(f_1(X),f_2(X))\leq r$.

Let $X,Y\in\overline{B}_A(r)$. By Proposition~\ref{weightedthompson} we have that
\begin{equation*}
\begin{split}
&e^{d_\infty(M_{s,t}(X,A),M_{s,t}(Y,A))}\\
&\leq\max\left\{\frac{ae^{d_\infty(f_2(X),f_2(Y))}+e^{-d_\infty(f_1(X),f_2(X))}be^{d_\infty(f_1(X),f_1(Y))}}{a+e^{-d_\infty(f_1(X),f_2(X))}b}\right.,\\
&\left.\frac{ae^{d_\infty(f_2(X),f_2(Y))}+e^{-d_\infty(f_1(Y),f_2(Y))}be^{d_\infty(f_1(X),f_1(Y))}}{a+e^{-d_\infty(f_1(Y),f_2(Y))}b}\right\}.
\end{split}
\end{equation*}
Since $f_1$ is a strict contraction on $\overline{B}_A(r)$ with contraction coefficient $\rho_1$ and also $d_\infty(f_1(X),f_2(X))\leq r$ for all $X\in\overline{B}_A(r)$, hence we have
\begin{equation*}
e^{d_\infty(M_{s,t}(X,A),M_{s,t}(Y,A))}\overset{(*)}{\leq}\frac{ae^{d_\infty(X,Y)}+e^{-r}be^{\rho_1d_\infty(X,Y)}}{a+e^{-r}b}
\end{equation*}
using a similar argument as we did to obtain \eqref{eq:usingstar}.
Now we seek $0<\rho<1$ such that
\begin{equation*}
\begin{split}
d_\infty(M_{s,t}(X,A),M_{s,t}(Y,A)))&\leq\log\left(\frac{ae^{d_\infty(X,Y)}+e^{-r}be^{\rho_1d_\infty(X,Y)}}{a+e^{-r}b}\right)\\
&\leq \rho d_\infty(X,Y)
\end{split}
\end{equation*}
for all $X,Y\in\overline{B}_I(r)$ i.e., $d_\infty(X,Y)\leq 2r$. It therefore suffices to find the maximum of the function
\begin{equation*}
f(x)=\frac{\log\left(\frac{e^{\rho_1x}be^{-r}+ae^{x}}{be^{-r}+a}\right)}{x}
\end{equation*}
on the interval $[0,2r]$. First routine calculations show that 
\begin{equation*}
\lim_{x\to 0+}\frac{\log\left(\frac{e^{\rho_1x}be^{-r}+ae^{x}}{be^{-r}+a}\right)}{x}=\frac{\rho_1be^{-r}+a}{be^{-r}+a}<1.
\end{equation*}
Then the maximization problem is the same as finding the smallest $\rho<1$ such that
\begin{equation*}
\log\left(\frac{x^{\rho_1}be^{-r}+ax}{be^{-r}+a}\right)\leq\rho\log(x)
\end{equation*}
on the transformed interval $[1,e^{2r}]$. But that is equivalent to
\begin{equation*}
\frac{x^{\rho_1}be^{-r}}{be^{-r}+a}+\frac{ax}{be^{-r}+a}\leq x^{\rho},
\end{equation*}
which by substitution with $y:=x^{1-\rho_1}$ is equivalent to requiring
\begin{equation*}
\frac{be^{-r}}{be^{-r}+a}+\frac{ay}{be^{-r}+a}\leq y^{\frac{\rho-\rho_1}{1-\rho_1}}
\end{equation*}
for $y\in[1,e^{2r(1-\rho_1)}]$.
Now similar considerations as in the end of the proof of Lemma~\ref{arithmeticcontraction} lead to that the smallest such $\frac{\rho-\rho_1}{1-\rho_1}$ is determined at the $e^{2r(1-\rho_1)}$ endpoint of the interval, hence
\begin{equation*}
\frac{\rho-\rho_1}{1-\rho_1}=\frac{\log\frac{be^{-r}+ae^{2r(1-\rho_1)}}{be^{-r}+a}}{2r(1-\rho_1)}.
\end{equation*}
In other words
\begin{equation*}
\rho=\rho_1+\frac{\log\frac{be^{-r}+ae^{2r(1-\rho_1)}}{be^{-r}+a}}{2r}
\end{equation*}
which is strictly less than 1.
\end{proof}

Now we would like to consider convex combinations of $M_{s,t}(A,B)$ which amounts to integrating with respect to a measure. We have already introduced the weak operator Pettis integral in Definition~\ref{D:pettis} which was sufficient for our purpose so far. We will consider probability measures, but now we change the point of view and instead we mostly consider the measures directly given in $\mathbb{P}$, in other words the push forward measures under the injective continuous map from the probability space to $\mathbb{P}$.

\begin{definition}
Let $\mathscr{P}(\mathbb{P})$ denote the set of all probability measures with bounded support in $\mathbb{P}$ on the $\sigma$-algebra generated by the open sets of $\mathbb{P}$ in the norm topology.
\end{definition}

\begin{corollary}
Let $\sigma\in\mathscr{P}(\mathbb{P})$. Then any norm/strong/weak continuous function $f:\supp\sigma\to\mathbb{P}$ with bounded range is weak operator Pettis integrable with respect to $\sigma$.
\end{corollary}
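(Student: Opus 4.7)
The plan is to verify the two hypotheses of Lemma~\ref{L:weakoperatorPettis}, namely weak operator measurability and weak integrability, and then invoke that lemma to conclude.

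For weak operator measurability, I need to check that $\omega \mapsto \langle f(\omega)x, y\rangle$ is $\sigma$-measurable for every pair $x, y \in E$. Under any of the three continuity hypotheses, this scalar function is in fact continuous on $\supp\sigma$: if $f$ is norm continuous then $\langle f(\cdot)x,y\rangle$ is continuous by Cauchy--Schwarz; if $f$ is strong continuous then $f(\cdot)x$ is norm-continuous into $E$ and $\langle\cdot,y\rangle$ is a bounded linear functional on $E$, so the composition is continuous; and weak operator continuity of $f$ yields the continuity of $\langle f(\cdot)x,y\rangle$ directly from the definition of the weak operator topology. Continuity on $\supp\sigma\subseteq\mathbb{P}$ with the norm topology implies Borel measurability, hence $\sigma$-measurability against the $\sigma$-algebra generated by the norm-open sets.

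For weak integrability, the boundedness of the range of $f$ supplies a constant $M>0$ with $\|f(\omega)\|\le M$ for every $\omega\in\supp\sigma$. Then $|\langle f(\omega)x,y\rangle|\le M\|x\|\|y\|$, and since $\sigma$ is a probability measure,
\begin{equation*}
\int_{\supp\sigma}|\langle f(\omega)x,y\rangle|\,d\sigma(\omega)\le M\|x\|\|y\|<\infty
\end{equation*}
for every $x,y\in E$. With both hypotheses of Lemma~\ref{L:weakoperatorPettis} verified, the weak operator Pettis integral $\int_{\supp\sigma}f\,d\sigma$ exists in $S(E)$.

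There is essentially no obstacle here; the only thing to be careful about is that the three continuity notions each separately imply weak scalar continuity of the pairings $\langle f(\cdot)x,y\rangle$, so the argument handles all three cases uniformly. The corollary is thus a direct bookkeeping consequence of Lemma~\ref{L:weakoperatorPettis} together with the finiteness of $\sigma$ and the uniform norm bound on $f$.
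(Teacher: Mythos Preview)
Your proof is correct and follows essentially the same approach as the paper: verify that $\omega\mapsto\langle f(\omega)x,y\rangle$ is continuous (hence measurable), use the uniform norm bound and Cauchy--Schwarz to get weak integrability, and then invoke Lemma~\ref{L:weakoperatorPettis}. The only cosmetic difference is that you spell out the three continuity cases separately, whereas the paper treats them in a single line.
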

\begin{proof}
Let $x,y\in E$. Then $\omega\mapsto\left\langle f(\omega)x,y\right\rangle$ is a continuous real valued function, hence measurable. The range of $f$ is bounded, so there exists a $K<\infty$ such that
$$f(\omega)\leq KI$$
for all $\omega\in\supp\sigma$. Hence by the Cauchy-Schwarz inequality we have
$$|\left\langle f(\omega)x,y\right\rangle|\leq \|f(\omega)x\| \|y\|\leq K\|x\| \|y\|$$
for fixed $x,y\in E$ i.e., the continuous real function $\left\langle f(\omega)x,y\right\rangle$ is bounded, hence integrable by the dominated convergence theorem or Theorem 11.8 in \cite{aliprantis}. Therefore we can apply Lemma~\ref{L:weakoperatorPettis}.
\end{proof}

The above corollary ensures us that in the remaining parts of the paper all weak operator Pettis integrals exist, since all of our functions will be continuous and bounded. In particular we will consider various integrals of $\l_s(X)$ and $M_{s,t}(X,A)$ which have lower and upper bounds due to \eqref{eq:maxmininp}, \eqref{eq:logbounds} and \eqref{eq:logbounds2}, moreover they are norm continuous functions in each of their arguments.


Let $\mathscr{P}([0,1])$ denote the set of all probability measures over the interval $[0,1]$. $\mathscr{P}([0,1])$ is a subset of the Banach space of finite signed measures over the interval $[0,1]$. Also $\mathscr{P}(\mathbb{P})$ is a subset of the Banach space of finite signed measures over $\mathbb{P}$ and in both cases the norm is provided by the total variation.
\begin{notation}
$\mathscr{P}([0,1]\times\mathbb{P})$ denotes the set of all probability measures on $[0,1]\times\mathbb{P}$ with bounded support. For $\mu\in\mathscr{P}([0,1]\times\mathbb{P})$ we say that $\mu$ is supported on $\{s\}\times\{A\}$ for some $s\in[0,1]$ and $A\in\mathbb{P}$, if $\{s\}\times\{A\}\subseteq\supp\mu$. We also say that $\mu$ is supported in $[0,1]\times\{A\}$, if there exists some non-empty subset $I\subseteq[0,1]$ such that $I\times\{A\}\subseteq\supp\mu$.
\end{notation}

\begin{definition}[Partial order for measures]\label{D:measureOrder}
Given a probability space $(\Omega,\mathcal{A},\pi)$, let $g_1:\Omega\mapsto\mathbb{P}$ and $g_2:\Omega\mapsto\mathbb{P}$ be two $\pi$-measurable maps. Denote by $\alpha_1:=g_{1*}\pi$ and $\alpha_2:=g_{2*}\pi$ the pushforward measures. Then we denote by
$$\alpha_1\leq \alpha_2$$
if and only if $g_1(\omega)\leq g_2(\omega)$ for all $\omega\in\Omega$.

Similarly let $([0,1]\times\Omega,\mathcal{A},\mu)$ be a probability space. Let $f_1:[0,1]\times\Omega\mapsto [0,1]\times\mathbb{P}$ and $f_2:[0,1]\times\Omega\mapsto [0,1]\times\mathbb{P}$ be two $\mu$-measurable maps and let $\nu_1:=f_{1*}\mu$ and $\nu_2:=f_{2*}\mu$ denote the pushforward measures on $[0,1]\times\mathbb{P}$.
Then we denote by
$$\nu_1\leq \nu_2$$
if and only if for all fixed $s\in[0,1]$ and $\omega\in\Omega$ we have $[f_1(s,\omega)]_1=[f_2(s,\omega)]_1$ and $[f_1(s,\omega)]_2\leq[f_2(s,\omega)]_2$ with respect to the positive definite order.
\end{definition}

\begin{remark}
The above Definition~\ref{D:measureOrder} of ordering of measures in $\mathscr{P}([0,1]\times\mathbb{P})$ is a direct generalization of the ordering of $k$-tuples $\mathbb{A},\mathbb{B}\in\mathbb{P}^n$. In previous works \cite{lawsonlim1,limpalfia} $\mathbb{A}\leq\mathbb{B}$ was understood element-wise i.e., $A_i\leq B_i$ for all $1\leq i\leq k$, and only tuples with identical associated element-wise weights were compared. Under this joint order operator monotonicity of (weighted) multivariable means were derived. Let us consider a typical example. Let $\xi$ be a probability measure on $[0,1]$, let $(\Omega,\mathcal{A},\mu)$ be a probability space and let $f_1:\Omega\mapsto\mathbb{P}$ and $f_2:\Omega\mapsto\mathbb{P}$ be two $\mu$-measurable maps with $f_1(\omega)\leq f_2(\omega)$ for all $\omega\in\Omega$. Then denoting by $\nu_1:=f_{1*}\mu$ and $\nu_2:=f_{2*}\mu$ the pushforward measures, we have for the product measures $\xi\times\nu_1\leq \xi\times\nu_2$ in the sense of our order.
\end{remark}
\begin{remark}
From the cone-theoretic point of view, the order $\mu_1\leq\mu_2$ in Definition~\ref{D:measureOrder} should be defined by requiring $\int_{[0,1]\times\mathbb{P}}f(s,A)d\mu_1(s,A)\leq \int_{[0,1]\times\mathbb{P}}f(s,A)d\mu_2(s,A)$ for all measurable and integrable functions $f:[0,1]\times\mathbb{P}\to\mathbb{P}$. If we had adopted this order, then it is not hard to see that all of our operator means defined in the remaining parts of the paper would have been monotone with respect to this cone-theoretic order as well, which appears to be weaker then our adopted order above i.e., one can compare more pairs of measures using this cone-theoretic partial order. Also the proof of monotonicity would be along the same lines here below.
\end{remark}

The set $\mathscr{P}([0,1]\times\mathbb{P})$ is a subset of a Banach space of all finite signed measures on $[0,1]\times\mathbb{P}$ with the total variation norm. Now we need a Fubini type of result.

\begin{lemma}\label{L:fubini}
Let $\nu\in\mathscr{P}([0,1])$ and $\sigma\in\mathscr{P}(\mathbb{P})$. Then
\begin{eqnarray*}
\int_{\mathbb{P}}\int_{[0,1]}\l_s\left(X^{-1/2}AX^{-1/2}\right)d\nu(s)d\sigma(A)\\
=\int_{[0,1]\times\mathbb{P}}\l_s\left(X^{-1/2}AX^{-1/2}\right)d(\nu\times\sigma)(s,A)\\
=\int_{[0,1]}\int_{\mathbb{P}}\l_s\left(X^{-1/2}AX^{-1/2}\right)d\sigma(A)d\nu(s).
\end{eqnarray*}
\end{lemma}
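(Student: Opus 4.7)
The plan is to reduce the identity to the classical scalar Fubini theorem by testing with arbitrary vectors $x,y\in E$ and exploiting the defining property of the weak operator Pettis integral. First, I would verify that all three integrals in the statement exist. The integrand $(s,A)\mapsto \l_s(X^{-1/2}AX^{-1/2})$ is norm continuous in both variables jointly on the compact set $[0,1]\times\supp\sigma$, and by \eqref{eq:logbounds} it satisfies uniform bounds in operator norm on $\supp\sigma$. Hence the corollary after Definition~\ref{D:pettis} (continuous bounded functions are weak operator Pettis integrable) applies to each of the three integrals, once with respect to $\sigma$, once with respect to $\nu\times\sigma$, and once iteratively. The continuity of the integrand also guarantees that the inner integrals, as $\mathbb{P}$-valued functions of the remaining variable, are themselves norm continuous and bounded, so the outer integrals are again covered by the same corollary.

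Next, I would fix arbitrary $x,y\in E$ and consider the scalar function
\begin{equation*}
g(s,A):=\bigl\langle \l_s(X^{-1/2}AX^{-1/2})x,y\bigr\rangle,
\end{equation*}
which is continuous and bounded on $[0,1]\times\supp\sigma$ (indeed $|g(s,A)|\le \|\l_s(X^{-1/2}AX^{-1/2})\|\,\|x\|\,\|y\|$, and the first factor is uniformly bounded over the compact joint support). In particular $g$ is Borel measurable and integrable with respect to each of $\nu\otimes\sigma$, $\sigma\otimes\nu$, and the product measure $\nu\times\sigma$. The classical Fubini--Tonelli theorem then yields
\begin{equation*}
\int_{\mathbb{P}}\!\!\int_{[0,1]}g(s,A)\,d\nu(s)\,d\sigma(A)
=\int_{[0,1]\times\mathbb{P}}g(s,A)\,d(\nu\times\sigma)(s,A)
=\int_{[0,1]}\!\!\int_{\mathbb{P}}g(s,A)\,d\sigma(A)\,d\nu(s).
\end{equation*}

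Now I would translate these scalar equalities back into the desired operator equalities via the defining property of the weak operator Pettis integral (Definition~\ref{D:pettis}). For the first equality, the inner integral $\int_{[0,1]}\l_s(X^{-1/2}AX^{-1/2})d\nu(s)$ is, by definition, the unique self-adjoint operator $T(A)\in S(E)$ satisfying $\langle T(A)x,y\rangle=\int_{[0,1]}g(s,A)d\nu(s)$; applying the Pettis integral with respect to $\sigma$ to $A\mapsto T(A)$ then gives an operator whose $(x,y)$-matrix element equals the left-hand iterated scalar integral. A parallel identification works for the other two operator integrals. Combining these with the scalar Fubini identity above and invoking the uniqueness part of the definition of the Pettis integral yields the equality of the three operators.

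The only mildly delicate point is the joint measurability/continuity check needed to iterate Pettis integrals and to invoke scalar Fubini; since the integrand is jointly norm continuous on a compact product space, this is immediate. No routine computation with $\l_s$ is required: the argument is structural and only relies on the definition of the weak operator Pettis integral together with the classical Fubini theorem applied to the bilinear pairing $\langle\,\cdot\, x,y\rangle$.
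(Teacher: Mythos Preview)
Your proposal is correct and follows essentially the same approach as the paper: reduce to the classical scalar Fubini theorem by pairing with arbitrary $x,y\in E$ and invoke the definition of the weak operator Pettis integral, after noting that joint norm continuity and boundedness of $(s,A)\mapsto \l_s(X^{-1/2}AX^{-1/2})$ on the support ensure all required integrability. One minor slip: $\supp\sigma$ is only bounded, not compact (since $\mathbb{P}$ may be infinite dimensional), but your argument only needs boundedness of the integrand, which follows from \eqref{eq:logbounds} and the bounded support, so nothing is affected.
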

\begin{proof}
First note that the function $\l_s\left(X^{-1/2}AX^{-1/2}\right)$ is jointly norm continuous in $\mathbb{P}\times[0,1]$, so therefore the real function $\left\langle \l_s\left(X^{-1/2}AX^{-1/2}\right)u,v\right\rangle$ is also continuous on $[0,1]\times\mathbb{P}$ for any $u,v\in E$, hence it is measurable with respect to $\nu\times\sigma$, also it is bounded since the support of $\nu\times\sigma$ is bounded i.e., $\l_s\left(X^{-1/2}AX^{-1/2}\right)$ is weak operator Pettis integrable. The rest of the assertion follows from Fubini's theorem for the Lebesgue integrable function $\left\langle \l_s\left(X^{-1/2}AX^{-1/2}\right)u,v\right\rangle$.
\end{proof}

In what follows after the above preparations we will study an analogue of \eqref{powermeanequ2}.

\begin{lemma}\label{powercontract}
Let $\mu\in\mathscr{P}([0,1]\times\mathbb{P})$ and $t\in(0,1]$. Then the function
\begin{equation}\label{meanequf}
f(X)=\int_{[0,1]\times\mathbb{P}}M_{s,t}(X,A)d\mu(s,A)
\end{equation}
is a strict contraction with respect to the Thompson metric $d_\infty(\cdot,\cdot)$ on every bounded $S\subseteq\mathbb{P}$ such that $\supp\mu\subseteq [0,1]\times S$.
\end{lemma}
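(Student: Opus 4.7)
The plan is to apply Proposition~\ref{P:principalcontr} pointwise in $(s,A)\in\supp\mu$ and then combine the pointwise estimates via the integral form of nonexpansivity provided by property (4') in Lemma~\ref{L:infinite4}. Since $S$ is bounded there is $r_0<\infty$ with $S\subseteq\overline{B}_I(r_0)$; then for every $X,Y\in S$ and every $A$ from the $\mathbb{P}$-projection of $\supp\mu\subseteq[0,1]\times S$ the triangle inequality gives $d_\infty(X,A),d_\infty(Y,A)\leq 2r_0$, so $X,Y\in\overline{B}_A(2r_0)$. The existence of the weak operator Pettis integral defining $f(X)$ is guaranteed by the corollary immediately preceding Lemma~\ref{L:fubini}, because $(s,A)\mapsto M_{s,t}(X,A)$ is norm continuous and has range bounded on $\supp\mu$ thanks to \eqref{eq:maxmininp}.

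For each fixed $(s,A)\in\supp\mu$, Proposition~\ref{P:principalcontr} applied with $r:=2r_0$ yields
$$d_\infty(M_{s,t}(X,A),M_{s,t}(Y,A))\leq \rho(s,t)\,d_\infty(X,Y),$$
where, with $a=s(1-t)/[t+s(1-t)]$, $b=t/[t+s(1-t)]$, and $\rho_1=\tfrac{1}{4r_0}\log\tfrac{e^{6r_0}(1-t)+t}{e^{2r_0}(1-t)+t}$,
$$\rho(s,t)=\rho_1+\frac{1}{4r_0}\log\frac{be^{-2r_0}+a\,e^{4r_0(1-\rho_1)}}{be^{-2r_0}+a}.$$
The next step is to verify that $\rho_{\max}:=\sup_{s\in[0,1]}\rho(s,t)<1$ for the fixed $t\in(0,1]$. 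The inner ratio is a weighted average of $1$ and $e^{4r_0(1-\rho_1)}$ with weights proportional to $be^{-2r_0}$ and $a$; since $b=t/(t+s(1-t))>0$ whenever $t>0$, we have the strict inequality
$$\frac{be^{-2r_0}+a\,e^{4r_0(1-\rho_1)}}{be^{-2r_0}+a}<e^{4r_0(1-\rho_1)},$$
hence $\rho(s,t)<\rho_1+(1-\rho_1)=1$ pointwise in $s\in[0,1]$. Since $\rho(s,t)$ is continuous in $s$ on the compact interval $[0,1]$, the supremum is attained and $\rho_{\max}<1$.

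Finally, applying property (4') in Lemma~\ref{L:infinite4} to the two weak operator Pettis integrable maps $(s,A)\mapsto M_{s,t}(X,A)$ and $(s,A)\mapsto M_{s,t}(Y,A)$ over the probability space $([0,1]\times\mathbb{P},\mu)$ gives
$$d_\infty(f(X),f(Y))\leq \sup_{(s,A)\in\supp\mu} d_\infty(M_{s,t}(X,A),M_{s,t}(Y,A))\leq \rho_{\max}\,d_\infty(X,Y),$$
which is the desired strict contraction property.

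The main obstacle is the uniform bound on $\rho(s,t)$ over $s\in[0,1]$: the formula in Proposition~\ref{P:principalcontr} degenerates at the boundary cases $s=0$, $s=1$, and $t=1$, so one must extract the key structural fact that the weight $b$ is strictly positive whenever $t>0$. Once this is observed, compactness of $[0,1]$ together with continuity of $\rho(s,t)$ in $s$ promotes the pointwise strict inequality to a uniform one, after which property (4') finishes the argument immediately.
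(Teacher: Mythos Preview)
Your proof is correct and follows essentially the same route as the paper: apply Proposition~\ref{P:principalcontr} pointwise to get a contraction coefficient $\rho(s,t)$ on a common Thompson ball containing $S$, bound this uniformly in $s$, and then invoke property~(4') of Lemma~\ref{L:infinite4}. The only minor difference is that the paper obtains the uniform bound $\rho<1$ by an explicit monotonicity estimate (using $t\leq b\leq 1$, $0\leq a\leq 1-t$ and Lemma~\ref{L:elementaryarithm}) rather than your continuity-plus-compactness argument, but both are equally valid.
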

\begin{proof}
Fix $t\in(0,1]$. Choose a large enough $r<\infty$ such that for each $A\in\mathbb{P}$ for which $\mu$ is supported in $[0,1]\times\{A\}$, we have $S\subseteq\overline{B}_A(r)$. We can do that since $\supp\mu$ is bounded in $[0,1]\times\mathbb{P}$ by definition, also therefore we have uniformly 
$$e^{k}I\leq A\leq e^KI$$
for all $A\in\mathbb{P}$ such that $\mu$ is supported in $[0,1]\times\{A\}$ and for some constants $-\infty<k\leq K<\infty$. By Proposition~\ref{P:principalcontr} $g_{s,t}(X)=M_{s,t}(X,A)$ is a strict contraction on $\overline{B}_A(r)$, with contraction coefficient
\begin{equation*}
\rho=\rho_1+\frac{\log\frac{be^{-r}+ae^{2r(1-\rho_1)}}{be^{-r}+a}}{2r},
\end{equation*}
where $\rho_1=\frac{\log\frac{e^{3r}(1-t)+t}{e^{r}(1-t)+t}}{2r}$ and $a=\frac{s(1-t)}{t+s(1-t)}$, $b=\frac{t}{t+s(1-t)}$. Since $t\leq b\leq 1$ and $0\leq a\leq 1-t$ we have by using (*) that
\begin{equation*}
\rho\leq\rho_1+\frac{\log\frac{te^{-r}+(1-t)e^{2r(1-\rho_1)}}{te^{-r}+(1-t)}}{2r}
\end{equation*}
which is strictly less then $1$. Hence using property (4') in Lemma~\ref{L:infinite4} we have that
\begin{equation*}
d_\infty(f(X),f(Y))\leq \rho d_\infty(X,Y)
\end{equation*}
for all $X,Y\in S$ i.e., $f(X)$ is a strict contraction on $S$.
\end{proof}

\begin{proposition}\label{P:uniquesol}
Let $\mu\in\mathscr{P}([0,1]\times\mathbb{P})$, $t\in(0,1]$. Then the equation
\begin{equation}\label{meanequ21}
X=\int_{[0,1]\times\mathbb{P}}M_{s,t}(X,A)d\mu(s,A)
\end{equation}
has a unique positive definite solution in $\mathbb{P}$.
\end{proposition}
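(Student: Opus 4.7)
The plan is to apply the Banach fixed point theorem to the map
\[
f(X) = \int_{[0,1]\times\mathbb{P}} M_{s,t}(X,A)\, d\mu(s,A)
\]
on a suitably chosen $d_\infty$-closed, forward-invariant subset $C\subseteq\mathbb{P}$, and then upgrade uniqueness to all of $\mathbb{P}$ by showing that every fixed point must already lie in $C$.

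Since $\supp\mu$ is bounded, I can pick $0 < m \leq M < \infty$ with $mI\leq A\leq MI$ for every $A$ in the $\mathbb{P}$-projection of $\supp\mu$, and set $C := \{X\in\mathbb{P} : mI\leq X\leq MI\}$. By Lemma~\ref{maxmininp} applied to each representing function $f_{s,t}\in\mathfrak{m}(t)$, one has the operator sandwich
\[
\bigl[(1-t)X^{-1} + tA^{-1}\bigr]^{-1} \leq M_{s,t}(X,A) \leq (1-t)X + tA.
\]
Integrating the upper bound against $\mu$, and using that the weak operator Pettis integral preserves the positive definite order (which is immediate from Definition~\ref{D:pettis}), gives $f(X)\leq (1-t)X + tMI\leq MI$ whenever $X\leq MI$; integrating the harmonic lower bound by the same order preservation, together with $A^{-1}\leq m^{-1}I$, yields $f(X)\geq mI$ whenever $X\geq mI$. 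Because $d_\infty$-convergence implies norm convergence on $\mathbb{P}$, the norm-closed bounded set $C$ is also $d_\infty$-closed, so $(C,d_\infty)$ is a complete metric space.

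Now Lemma~\ref{powercontract} with $S=C$ tells us that $f|_C$ is a strict $d_\infty$-contraction, so the contraction mapping theorem supplies a unique fixed point $X_*\in C$. To conclude that this is the unique solution in all of $\mathbb{P}$, I would apply the sandwich once more to an arbitrary fixed point $Y\in\mathbb{P}$: $Y=f(Y)\leq(1-t)Y+tMI$ forces $Y\leq MI$, and $Y=f(Y)\geq\bigl[(1-t)Y^{-1}+tm^{-1}I\bigr]^{-1}$ forces $Y\geq mI$, hence $Y\in C$ and therefore $Y=X_*$. The main technical point to verify carefully is the order preservation of the Pettis integral used to integrate the sandwich; beyond that, everything is already packaged in the previous sections, and the contraction coefficient itself is delivered by Lemma~\ref{powercontract}.
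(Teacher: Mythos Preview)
Your proof is correct and follows essentially the same approach as the paper: choose an order interval containing the $\mathbb{P}$-projection of $\supp\mu$, use the harmonic--arithmetic sandwich from Lemma~\ref{maxmininp} to get forward invariance, invoke Lemma~\ref{powercontract} for the strict contraction, and apply Banach's theorem. The only cosmetic difference is in the global uniqueness step: the paper lets $S=\overline{B}_I(r)$ with $r$ arbitrary (so any putative fixed point eventually lies in some such $S$), whereas you fix the smallest interval $C=[mI,MI]$ and observe directly from the sandwich that every fixed point $Y=f(Y)$ already satisfies $mI\leq Y\leq MI$; both arguments are equally short, and yours is arguably a little cleaner.
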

\begin{proof}
By Lemma~\ref{powercontract}, for every large enough bounded subset $S\subseteq\mathbb{P}$ the function $f(X)$ given in \eqref{meanequf} is a strict contraction on $S$. Suppose now that $S:=\overline{B}_I(r)$ for any $r<\infty$ such that $\supp\mu\subseteq [0,1]\times S$.
Then we claim that $f(S)\subseteq S$. We have the following bounds given by \eqref{eq:maxmininp}:
$$\left[(1-t)X^{-1}+tA^{-1}\right]^{-1}\leq M_{s,t}(X,A)\leq (1-t)X+tA.$$
Since $S=\overline{B}_I(r)$ it follows that for any $X\in S$ we have $e^{-r}I\leq X\leq e^{r}I$. Since $\supp\mu\subseteq [0,1]\times S$ we also have $e^{-r}I\leq A\leq e^{r}I$ for all $A\in\mathbb{P}$ such that $\mu$ is supported in $[0,1]\times\{A\}$. Therefore by property (ii) in Definition~\ref{symmean} we have
\begin{eqnarray*}
e^{-r}I&=&\left[(1-t)(e^{-r}I)^{-1}+t(e^{-r}I)^{-1}\right]^{-1}\\
&\leq &\left[(1-t)X^{-1}+t(e^{-r}I)^{-1}\right]^{-1}\\
&\leq &\int_{[0,1]\times\mathbb{P}}M_{s,t}(X,A)d\mu(s,A)
\end{eqnarray*}
for all $X\in S$ and $A\in\mathbb{P}$ such that $\mu$ is supported in $[0,1]\times\{A\}$. A similar argument using the weighted arithmetic mean leads to
\begin{eqnarray*}
\int_{[0,1]\times\mathbb{P}}M_{s,t}(X,A)d\mu(s,A)\leq e^{r}I.
\end{eqnarray*}
This yields that $e^{-r}I\leq f(X)\leq e^{r}I$ for all $X\in S=\overline{B}_I(r)$, hence $f(S)\subseteq S$.

Now the iterates of $f^{\circ n}(X)$ stay in $S$ if $X\in S$ and $S$ is closed. Also $f$ is a strict contraction on $S$, therefore by Banach's fixed point theorem $f(X)$ has a unique fixed point in $S$, so equation \eqref{meanequ21} has a unique positive definite solution in $S$. Since $S=\overline{B}_I(r)$ was an arbitrary, large enough bounded subset of $\mathbb{P}$ such that $\supp\mu\subseteq [0,1]\times S$, it follows that the same holds on all of $\mathbb{P}$.
\end{proof}

\begin{definition}[Induced Operator Mean]
Let $\sigma\in\mathscr{P}(\mathbb{P})$, $t\in(0,1]$ and $\nu$ be a probability measure on $[0,1]$. We denote by $L_{t,\nu}(\sigma)$ the unique solution $X\in\mathbb{P}$ of the equation
\begin{equation}\label{meanequ3}
X=\int_{\mathbb{P}}\int_{[0,1]}M_{s,t}(X,A)d\nu(s)d\sigma(A).
\end{equation}
We call $L_{t,\nu}(\sigma)$ the $\sigma$-weighted $\nu$-induced operator mean.

Let $\mu\in\mathscr{P}([0,1]\times\mathbb{P})$ and $t\in(0,1]$. Then we denote by $L_{t}(\mu)$ the unique solution $X\in\mathbb{P}$ of the equation
\begin{equation}\label{meanequ31}
X=\int_{[0,1]\times\mathbb{P}}M_{s,t}(X,A)d\mu(s,A).
\end{equation}
We call $L_{t}(\mu)$ the $\mu$-weighted induced operator mean.
\end{definition}

In the above definition in the notations there should be no confusion, since the number of measures as arguments of $L$ should determine which operator mean is intended.

\begin{remark}\label{monotone}
Let $f(X)$ be defined by \eqref{meanequf}. Then by the monotonicity of $M_{s,t}$, $f$ is monotone: $X\leq Y$ implies that $f(X)\leq f(Y)$.
\end{remark}



For any $X\in \mathrm{GL}(E)$ and $\mu\in\mathscr{P}([0,1]\times\mathbb{P})$ we will use the notation
$$(X\mu X^{*})(s,A):=\mu\left(s,X^{-1}A(X^{*})^{-1}\right),$$
similarly for $\sigma\in\mathscr{P}(\mathbb{P})$
$$(X\sigma X^{*})(A):=\sigma\left(X^{-1}A(X^{*})^{-1}\right).$$
Integrating with respect to the above two measures is not a problem due to a generalized form of the change of variables formula that holds for the Lebesgue integral and hence trivially for the weak operator Pettis integral, see for example Theorem 2.26 in \cite{bashirov}. Also weak operator Pettis integration with respect to other transformed measures that we will see later is also permitted due to the continuity of the mappings.

\begin{proposition}\label{MPro}
Let $\mu,\mu_1,\mu_2\in\mathscr{P}([0,1]\times\mathbb{P})$ and $t\in(0,1]$. Then
\begin{itemize}
 \item[(1)] $L_{t}(\mu)=A$ if $\supp\mu\subseteq [0,1]\times\{A\}$ for an $A\in\mathbb{P}$;
 \item[(2)] $L_{t}(\mu_1)\leq L_{t}(\mu_2)$ if $\mu_1\leq
 \mu_2;$
 \item[(3)] $L_{t}(X\mu X^{*})=XL_{t}(\mu)X^{*}$ for any $X\in \mathrm{GL}(E);$
 \item[(4)] Suppose $\int_{[0,1]\times\mathbb{P}}M_{s,t}(X,A)d\mu_1(s,A)\leq \int_{[0,1]\times\mathbb{P}}M_{s,t}(X,A)d\mu_2(s,A)$ for $\mu_1,\mu_2\in\mathscr{P}([0,1]\times\mathbb{P})$. Then $L_{t}(\mu_1)\leq L_{t}(\mu_2);$
 \item[(5)] If $0<t_1\leq t_2\leq 1$ then $L_{t_1}(\mu)\leq L_{t_2}(\mu);$
 \item[(6)] If $d\mu_2(s,A)=d\mu_1(s,g(s,A))$ where $g$ is measurable for fixed $s$, then $(1-u)L_{t}(\mu_1)+uL_{t}(\mu_2)\leq
 L_{t}((1-u)\mu_1+u\mu_2)$ for any $u\in [0,1];$
 \item[(7)] If $d\mu_2(s,A)=d\mu_1(s,g(s,A))$ where $g$ is measurable for all fixed $s$, then $d_{\infty}(L_{t}(\mu_1), L_{t}(\mu_2))\leq \underset{\mu_2\text{ is supported on }\{s\}\times\{A\}}{\sup}\{d_{\infty}(A,g(s,A))\};$
 \item[(8)]$\Phi(L_{t}(\mu))\leq
L_{t}(\Phi(\mu))$ for any measurable positive unital linear map $\Phi$, where $\Phi(\mu)(s,A):=\mu(s,\Phi^{-1}(A)).$
\end{itemize}
\end{proposition}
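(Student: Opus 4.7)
The plan is to derive each of (1)--(8) from properties of the fixed-point map $f_\mu(X) := \int_{[0,1]\times\mathbb{P}} M_{s,t}(X,A)\,d\mu(s,A)$, whose unique fixed point in $\mathbb{P}$ is $L_t(\mu)$ by Proposition~\ref{P:uniquesol}. Three general devices are used throughout: (i) \emph{uniqueness}, so any $X$ satisfying $X = f_\mu(X)$ equals $L_t(\mu)$; (ii) the \emph{monotone iteration principle}: $f_\mu$ is monotone in $X$ (Remark~\ref{monotone}) and a strict Thompson contraction on bounded balls (Lemma~\ref{powercontract}), so $X_0 \leq f_\mu(X_0)$ forces $X_0 \leq L_t(\mu)$ by iterating upward and using closedness of the positive cone, with the symmetric statement for $\geq$; and (iii) the pointwise properties of the two-variable means $M_{s,t}$ (monotonicity in the second argument, congruence invariance, joint concavity, and non-expansiveness from Proposition~\ref{nonexpansivemean}).

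Properties (1) and (3) fall out of device (i). For (1), $X=A$ solves the equation because every operator mean fixes $A$ in the sense $M_{s,t}(A,A)=A$. For (3), substituting $Y := XL_t(\mu)X^*$ into the equation for $X\mu X^*$ and using the standard congruence identity $M_{s,t}(XBX^*, XCX^*) = X M_{s,t}(B,C) X^*$ for $X \in \mathrm{GL}(E)$ verifies $Y = f_{X\mu X^*}(Y)$, so uniqueness yields the claim. Properties (2) and (4) use device (ii): from the hypotheses one deduces $f_{\mu_1}(X) \leq f_{\mu_2}(X)$ pointwise in $X$, via monotonicity of $M_{s,t}$ in its second argument together with Definition~\ref{D:measureOrder} for (2), and directly for (4); applied at $X_0 := L_t(\mu_1)$ this gives $X_0 \leq f_{\mu_2}(X_0)$, and monotone iteration yields $L_t(\mu_1) \leq L_t(\mu_2)$.

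For (7), set $L := \sup\{d_\infty(A, g(s,A)) : \mu_2 \text{ is supported on } \{s\}\times\{A\}\}$. By Lemma~\ref{thompsonproperties}(5) the hypothesis gives pointwise $e^{-L} g(s,A) \leq A \leq e^L g(s,A)$; properties (2) and (3) applied with the scalar congruence $e^{\pm L/2}I$ then lift this to $e^{-L} L_t(\mu_1) \leq L_t(\mu_2) \leq e^L L_t(\mu_1)$, which is the desired Thompson bound. For (8), apply $\Phi$ to the defining equation, exchange it with the weak operator Pettis integral by linearity, and use Proposition~\ref{positivemapthm} in the form $\Phi(M_{s,t}(X,A)) \leq M_{s,t}(\Phi(X), \Phi(A))$ to obtain $\Phi(L_t(\mu)) \leq f_{\Phi(\mu)}(\Phi(L_t(\mu)))$; device (ii) then concludes. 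For (6), setting $Y := (1-u)L_t(\mu_1) + uL_t(\mu_2)$ and using joint concavity of the operator mean $M_{s,t}$ (inherited from operator concavity of its representing function) together with the pairing between $\mu_1$ and $\mu_2$ furnished by $g$, one shows $Y \leq f_{(1-u)\mu_1 + u\mu_2}(Y)$; device (ii) then gives the stated inequality.

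The main obstacle is (5), the monotonicity in $t$. A pointwise comparison of $M_{s,t_1}$ and $M_{s,t_2}$ does not work, since the representing function $f_{s,t}(x)$ is monotone in $t$ only on one side of $x=1$ at a time, whereas $X^{-1/2}AX^{-1/2}$ generally has spectrum straddling $1$. The plan is instead to exploit the semigroup identity \eqref{semigrpproperty}, which factors $M_{s,t_1}(X,A) = M_{s,t_2}(X, M_{s,\tau}(X,A))$ with $\tau := t_1/t_2 \in (0,1]$. This rewrites the defining equation for $L_{t_1}(\mu)$ as $X = \int M_{s,t_2}(X, M_{s,\tau}(X,A))\,d\mu(s,A)$, displaying $L_{t_1}(\mu)$ as an $L_{t_2}$-type fixed point on data $M_{s,\tau}(X,A)$ that is itself an operator mean interpolating between $X$ and $A$. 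Combined with device (ii) anchored at $L_{t_2}(\mu)$, this comparison yields $L_{t_1}(\mu) \leq L_{t_2}(\mu)$.
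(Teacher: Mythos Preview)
Your treatment of items (1)--(4) and (6)--(8) is correct and matches the paper's proof essentially line for line: uniqueness for (1) and (3), the monotone iteration device for (2), (4), (6), (8), and the scalar-congruence trick for (7).

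The gap is in (5). You are right that the naive pointwise comparison $f_{s,t_1}(x)\leq f_{s,t_2}(x)$ fails for $x<1$ (indeed $\partial_t f_{s,t}(x)$ has the sign of $\l_s(x)$). Curiously, the paper's own argument invokes exactly this pointwise inequality, so the published proof of (5) is itself defective. However, your proposed fix via the semigroup identity $M_{s,t_1}(X,A)=M_{s,t_2}(X,M_{s,\tau}(X,A))$ does not close the gap either. Anchoring device~(ii) at $Y:=L_{t_2}(\mu)$ requires $\int M_{s,t_2}(Y,M_{s,\tau}(Y,A))\,d\mu\leq Y=\int M_{s,t_2}(Y,A)\,d\mu$, and by monotonicity of $M_{s,t_2}$ in its second slot this would need $M_{s,\tau}(Y,A)\leq A$ pointwise. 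But $f_{s,\tau}(x)\leq x$ fails for $x<1$ (since $f_{s,\tau}(1)=1$ and $f_{s,\tau}'(1)=\tau<1$), so one is back to the same spectral obstruction. The factorization merely relocates the problem; it does not remove it.

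A route that does work is to pass to the equivalent ``Karcher'' form: from $f_{s,t}(x)-1=t\,\l_{t+s(1-t)}(x)$ (the identity used later in Theorem~\ref{T:inducedlambda}) one sees that $X=\int M_{s,t}(X,A)\,d\mu$ is equivalent to $\int \l_{p(s,t)}(X^{-1/2}AX^{-1/2})\,d\mu=0$ with $p(s,t)=t+s(1-t)$. Now $\partial_p\l_p(x)=(x-1)^2/[(1-p)x+p]^2\geq 0$, so $\l_{p(s,t)}(x)$ is genuinely increasing in $t$ for \emph{every} $x>0$. Evaluating at $X_1:=L_{t_1}(\mu)$ gives $\int \l_{p(s,t_2)}(X_1^{-1/2}AX_1^{-1/2})\,d\mu\geq 0$, which rewrites as $\int M_{s,t_2}(X_1,A)\,d\mu\geq X_1$; your device~(ii) then yields $L_{t_1}(\mu)\leq L_{t_2}(\mu)$.
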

\begin{proof}
(1) By \eqref{meanequ31} we have $X=\int_{[0,1]\times\mathbb{P}}M_{s,t}(X,A)d\mu(s,A)$ and using that $M_{s,t}(A,A)=A$ we see that $X=A$ is a, and by uniqueness, the solution of \eqref{meanequ31}.

\vspace{2mm}

(2) Define $$ f(X):=\int_{[0,1]\times\mathbb{P}}M_{s,t}(X,A)d\mu_1(s,A)$$ and $$
g(X):=\int_{[0,1]\times\mathbb{P}}M_{s,t}(X,A)d\mu_2(s,A). $$ Then $L_{t}(\mu_1)=\lim_{l\to \infty}f^{\circ l}(X)$ and $L_{t}(\mu_2)=\lim_{l\to \infty}g^{\circ l}(X)$ for any $X\in {\Bbb P}$, by the
Banach fixed point theorem. By the monotonicity of $M_{s,t}\in\mathfrak{M}$ we have
$M_{s,t}(X,g(s,A))\leq M_{s,t}(X,A)$ and if we integrate this we get
$f(X)\leq g(X)$ for all $X\in {\Bbb P},$ similarly follows that $f(X)\leq f(Y),
g(X)\leq g(Y)$ whenever $X\leq Y.$ Let $X_{0}:=L_{t}(\mu_1)$. Then $f(X_{0})\leq
g(X_{0})$ and $f^{\circ 2}(X_{0})=f(f(X_{0}))\leq g(f(X_{0}))\leq
g^{\circ 2}(X_{0}).$ Inductively, we have $f^{\circ l}(X_{0})\leq g^{\circ l}(X_{0})$
for all $l\in {\Bbb N}.$ Therefore, $L_{t}(\mu_1)=\lim_{l\to\infty}f^{\circ l}(X_{0})\leq \lim_{l\to\infty}
g^{\circ l}(X_{0})=L_{t}(\mu_2).$

 \vspace{2mm}

 (3) We have $XM_{s,t}(A,B)X^{*}=M_{s,t}(XAX^{*},XBX^{*})$ and applying this to the defining equation \eqref{meanequ3} the property follows.


 (4) Define $$f(X):=\int_{[0,1]\times\mathbb{P}}M_{s,t}(X,A)d\mu_1(s,A)$$ and $$
g(X):=\int_{[0,1]\times\mathbb{P}}M_{s,t}(X,A)d\mu_2(s,A). $$ Then $L_{t}(\mu_1)=\lim_{l\to \infty}f^{\circ l}(X)$ and $L_{t}(\mu_2)=\lim_{l\to \infty}g^{\circ l}(X)$ for any $X\in {\Bbb P}$, by the
Banach fixed point theorem. By assumption we also have that $f(X)\leq g(X)$. Let $Y:=L_{t}(\mu_1)$. Then we have $Y=f(Y)\leq g(Y)$ and inductively $Y=f^{\circ l}(Y)\leq g^{\circ l}(Y)$, hence $L_{t}(\mu_1)=Y\leq \lim_{l\to\infty}g^{\circ l}(Y)=L_{t}(\mu_2)$.

\vspace{2mm}

 (5) By Proposition~\ref{P:reprprincipalf} for all $s,t\in[0,1]$ we have
 $$f_{s,t}(x)=\l_s^{-1}\left(t\l_s(x)\right),$$
 so for $0<t_1<t_2\leq 1$ we have $f_{s,t_1}(x)\leq f_{s,t_2}(x)$, since $\l_s^{-1}$ is a monotone convex increasing function, which follows from the operator monotonicity, hence concavity of $\l_s$. Then by Proposition~\ref{P:ordermean} we get
$$\int_{[0,1]\times\mathbb{P}}M_{s,t_1}(A,B)d\mu(s,A)\leq \int_{[0,1]\times\mathbb{P}}M_{s,t_2}(A,B)d\mu(s,A).$$
Now we may apply a similar argument as in the proof of property (4) to conclude that $L_{t_1}(\mu)\leq L_{t_2}(\mu).$

\vspace{2mm}

 (6) Let $X=L_{t}(\mu_1)$ and $Y=L_{t}(\mu_2).$ For $u\in [0,1],$ we set $Z_{u}=(1-u)X+uY.$ Let $$f(Z)=\int_{[0,1]\times\mathbb{P}}M_{s,t}(Z,(1-u)A+ug(s,A))d\mu_1(s,A).$$ Then by the joint concavity of two-variable operator means (Theorem 3.5 \cite{kubo})
\begin{eqnarray*}
Z_{u}&=&(1-u)X+uY\\
&=&\int_{[0,1]\times\mathbb{P}}\left[(1-u)M_{s,t}(X,A)+uM_{s,t}(Y,g(s,A))\right]d\mu_1(s,A)\\
&\leq&\int_{[0,1]\times\mathbb{P}}M_{s,t}((1-u)X+uY,(1-u)A+ug(s,A))d\mu_1(s,A)\\
&=&f(Z_{u}).
\end{eqnarray*}
Inductively, $Z_{u}\leq f^{\circ l}(Z_{u})$ for all $l\in {\Bbb N}.$
Therefore, $ (1-u)L_{t}(\mu_1)+uL_{t}(\mu_2)=Z_{u}\leq L_{t}((1-u)\mu_1+u\mu_2).$

\vspace{2mm}

 (7) Follows from a similar argument to the proof of Proposition~\ref{nonexpansivemean} using property (2) and (3).

\vspace{2mm}

 (8) Note that $\Phi(M_{s,t}(A,B))\leq
M_{s,t}(\Phi(A),\Phi(B))$ for any $A,B>0$ by Proposition~\ref{positivemapthm}. Then
\begin{equation}\label{E:L1}
\begin{split}
\Phi(L_{t}(\mu))&=\int_{[0,1]\times\mathbb{P}}\Phi(M_{s,t}(L_{t}(\mu),A))d\mu(s,A)\\
&\leq \int_{[0,1]\times\mathbb{P}}M_{s,t}(\Phi(L_{t,\nu_1}(\sigma_1),A)),\Phi(A))d\mu(s,A).
\end{split}
\end{equation}
Define $$f(X)=\int_{[0,1]\times\mathbb{P}}M_{s,t}(X,\Phi(A))d\mu(s,A).$$ Then
$\lim_{l\to\infty}f^{\circ l}(X)=L_{t}(\Phi(\mu))$ for any
$X>0.$ By \eqref{E:L1}, $f(\Phi(L_{t}(\mu)))\geq \Phi(L_{t}(\mu)).$ Since $f$
is monotonic, $f^{\circ l}(\Phi(L_{t}(\mu)))\geq \Phi(L_{t}(\mu))$ for all $l\in
{\Bbb N}.$ Thus
\begin{eqnarray*}
L_{t}(\Phi(\mu))=\lim_{l\to\infty}f^{\circ l}(\Phi(L_{t}(\mu)))\geq
\Phi(L_{t}(\mu)).
\end{eqnarray*}
\end{proof}

\begin{corollary}\label{cor:opmean}
Suppose $\nu\in\mathscr{P}([0,1])$ and $\sigma\in\mathscr{P}(\mathbb{P})$ and assume that $\sigma(X):=(1-w)\delta_{A}(X)+w\delta_{B}(X)$ with $w\in(0,1)$ where $\delta_{A}(X)$ denotes the Dirac delta supported on ${A}$. 
Then $L_{t}(\nu\times\sigma)$ is an operator mean in the two variables $(A,B)$ i.e., $L_{t}(\nu\times\sigma)\in\mathfrak{M}$.
\end{corollary}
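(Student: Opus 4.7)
The plan is to verify each of the four defining properties of Definition~\ref{symmean} for the two-variable function $N(A,B):=L_{t}(\nu\times\sigma_{A,B})$, where $\sigma_{A,B}:=(1-w)\delta_{A}+w\delta_{B}$. Properties (i) and (ii) are immediate consequences of Proposition~\ref{MPro}. Taking $A=B=I$ collapses $\sigma_{A,B}$ to $\delta_{I}$, so $\supp(\nu\times\sigma_{I,I})\subseteq[0,1]\times\{I\}$ and part (1) of Proposition~\ref{MPro} gives $N(I,I)=I$. For (ii), if $A\leq A'$ and $B\leq B'$, then parametrizing both measures over the common two-point probability space with weights $1-w,w$ in the second factor places them in the order of Definition~\ref{D:measureOrder}, whence part (2) of Proposition~\ref{MPro} yields $N(A,B)\leq N(A',B')$.

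For the transformer inequality (iii), first handle invertible Hermitian $C$. Since $C(\nu\times\sigma_{A,B})C^{\ast}=\nu\times\sigma_{CAC,CBC}$, part (3) of Proposition~\ref{MPro} gives the equality $CN(A,B)C=N(CAC,CBC)$, which is stronger than required. For a general Hermitian $C$, pick $\varepsilon_n\downarrow 0$ with $-\varepsilon_n\notin\sigma(C)$ (possible since $\sigma(C)$ is a compact subset of $\mathbb{R}$), so that $C_n:=C+\varepsilon_{n}I$ is invertible Hermitian and $C_{n}AC_{n}, C_{n}BC_{n}\in\mathbb{P}$. Equality $C_{n}N(A,B)C_{n}=N(C_{n}AC_{n},C_{n}BC_{n})$ holds at each stage, and passing to the limit $n\to\infty$ yields $CN(A,B)C\leq N(CAC,CBC)$ in the natural semidefinite extension sense, where the right hand side is interpreted via property (iv) and the Kubo--Ando extension to $\mathbb{P}_{0}$.

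For property (iv), let $A_n\downarrow A$ and $B_n\downarrow B$ strongly, and set $\mu_n:=\nu\times\sigma_{A_n,B_n}$, $\mu:=\nu\times\sigma_{A,B}$. By part (2) of Proposition~\ref{MPro}, $N(A_n,B_n)=L_{t}(\mu_n)$ is monotone decreasing, hence converges strongly to some $X^{\ast}\geq N(A,B)$. To identify $X^{\ast}$ with $N(A,B)$, by the uniqueness in Proposition~\ref{P:uniquesol} it suffices to show that $X^{\ast}$ satisfies the fixed point equation $X^{\ast}=\int_{[0,1]\times\mathbb{P}}M_{s,t}(X^{\ast},A)\,d\mu(s,A)$. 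Starting from the fixed point equation satisfied by each $L_{t}(\mu_n)$, one passes to the limit on the right hand side by combining the strong continuity from above of each two-variable Kubo--Ando mean $M_{s,t}$ (axiom (iv) of Definition~\ref{symmean} applied to $M_{s,t}$ itself) with the dominated convergence Theorem~\ref{T:domconv} for the weak operator Pettis integral; the uniform domination is provided by the operator bounds \eqref{eq:maxmininp} and the eventual norm control $L_{t}(\mu_n)\in[e^{-r}I,e^{r}I]$ for sufficiently large $r$.

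The main obstacle is the justification of the interchange of strong limit and integration in property (iv): one must produce a genuinely uniform weak operator Pettis integrable dominating bound for the net $\{M_{s,t}(L_{t}(\mu_n),\cdot)\}_n$ — not just a pointwise bound — in order to invoke Theorem~\ref{T:domconv} rigorously. Once this is secured, the singular case of (iii) follows from the same continuity of $N$ along norm-convergent approximations of $C$ by invertible Hermitian operators, so in effect (iii) and (iv) are proved in tandem.
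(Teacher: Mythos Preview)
Your direct verification of the four Kubo--Ando axioms is correct in substance, but it is a genuinely different route from the paper's. The paper does \emph{not} check (i)--(iv) one by one. Instead it uses the congruence invariance in Proposition~\ref{MPro}(3) to write
\[
L_t(\nu\times\sigma_{A,B})=A^{1/2}g\bigl(A^{-1/2}BA^{-1/2}\bigr)A^{1/2}
\]
for a single-variable function $g(C):=L_t(\nu\times\sigma_{I,C})$, and then shows $g\in\mathfrak{m}$: normalization from Proposition~\ref{MPro}(1), operator monotonicity from Proposition~\ref{MPro}(2), and---the one nontrivial point---continuity of the real scalar function $g$, obtained because $g$ is the uniform limit on bounded sets of the analytic iterates $f^{\circ l}(I)$ (uniformity coming from the strict contraction in Lemma~\ref{powercontract}). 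It then invokes Kubo--Ando's Theorem~3.2 to conclude $L_t(\nu\times\sigma)\in\mathfrak{M}$.

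What each approach buys: the paper's argument is shorter and structural---once the representing function is in $\mathfrak{m}$, \emph{all} of (i)--(iv) come for free from Kubo--Ando theory, including the delicate (iii) and (iv) that cost you the most effort. Your approach is more self-contained and avoids the representing-function machinery, but the price is the limit interchange in (iv) and the semidefinite extension in (iii). Your (iv) argument is in fact fine: the uniform bound $M_{s,t}(L_t(\mu_n),A_n)\le (1-t)L_t(\mu_1)+tA_1$ gives a constant dominating function on the probability space $([0,1],\nu)$, so classical dominated convergence on $\langle\,\cdot\,x,y\rangle$ already yields weak-operator convergence of the integral, which together with strong convergence of the left side forces $X^\ast$ to satisfy the fixed-point equation. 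Your treatment of (iii) for non-invertible $C$ is somewhat circular (you need the extension to $\mathbb{P}_0$ before you have established that $N$ is a Kubo--Ando mean), but since the paper itself defines $\mathfrak{M}$ only on $\mathbb{P}\times\mathbb{P}$, the invertible case---where you get \emph{equality} directly from Proposition~\ref{MPro}(3)---is really all that is required here.
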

\begin{proof}
By property (3) in Proposition~\ref{MPro} it follows that
$$L_{t}(\nu\times\sigma)=A^{1/2}g(A^{-1/2}BA^{-1/2})A^{1/2}$$
and property (1) yields that $g(I)=I$. By Lemma~\ref{powercontract} we have that
\begin{equation*}
\lim_{l\to \infty}f^{\circ l}(X)=g(A^{-1/2}BA^{-1/2})
\end{equation*}
for all $X\in\mathbb{P}$ where
$$f(X)=\int_{[0,1]}(1-w)M_{s,t}(X,I)+wM_{s,t}(X,A^{-1/2}BA^{-1/2})d\nu(s).$$
We can choose $X=I$ and then
\begin{equation*}
\lim_{l\to \infty}f^{\circ l}(I)=g(C)
\end{equation*}
where $C=A^{-1/2}BA^{-1/2}$. Also by simple calculation we have that
$$f(X)=(1-w)Xf_{s,t}(X^{-1})+wXf_{s,t}(X^{-1}C).$$
By property (2) in Proposition~\ref{MPro}, $g$ is operator monotone.  Moreover $f^{\circ l}(I)$ is an analytic real map in the single variable $C$ for all $l$, moreover the net $f^{\circ l}(I)$ converges uniformly on bounded subsets of $\mathbb{P}$ due to the strict contraction property of $f(X)$. Hence the pointwise limit $\lim_{l\to\infty}f^{\circ l}(1)$ for positive real (scalar) $C$ is a continuous real map as well and is identical to $g$ by the properties of the functional calculus of self-adjoint operators, since the net $f^{\circ l}(I)$ converges in norm for all $C$ (the topology generated by the metric $d_\infty$ agrees with the relative Banach space topology \cite{thompson}). It is also easy to see that $g$ is positive on $(0,\infty)$ and $g(1)=1$, hence $g$ is an operator monotone function in $\mathfrak{m}$. So by Theorem 3.2 in \cite{kubo} we get that $L_{t}(\nu\times\sigma)=A^{1/2}g(A^{-1/2}BA^{-1/2})A^{1/2}$ is an operator mean in the sense of Definition~\ref{symmean}.
\end{proof}

\section{Generalized Karcher equations and one parameter families of operator means}
In this section we generalize the results of \cite{limpalfia,lawsonlim1} which were given for the one parameter family of matrix power means. We will provide solutions of nonlinear operator equations that are given in Definition~\ref{D:genKarcherequ}, this time considered in the setting of the full (possibly infinite dimensional) cone $\mathbb{P}$. Let us repeat the definition once more:
\begin{definition}[Generalized Karcher equation]\label{generalizedkarcherequ}
Let $\mu\in\mathscr{P}([0,1]\times\mathbb{P}).$ The generalized Karcher equation is the operator equation
\begin{equation*}
\int_{[0,1]\times\mathbb{P}}X^{1/2}\l_s(X^{-1/2}AX^{-1/2})X^{1/2}d\mu(s,A)=0
\end{equation*}
for $X\in\mathbb{P}$.
\end{definition}

\begin{proposition}\label{oneparameterfam}
The one parameter family of $\mu$-weighted operator means $L_{t}(\mu)$ are continuous for $t\in(0,1]$ on any bounded set $S\subseteq\mathbb{P}$ with respect to the topology generated by $d_\infty$ (the norm topology).
\end{proposition}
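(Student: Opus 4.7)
My plan is to use the standard fixed point perturbation argument, exploiting the uniform contraction of $F_t(X) := \int_{[0,1]\times\mathbb{P}} M_{s,t}(X,A)\,d\mu(s,A)$ on compact subintervals of $(0,1]$ together with continuity of $F_t$ in $t$.

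First I would fix $\mu \in \mathscr{P}([0,1]\times\mathbb{P})$ and choose $r<\infty$ so that $\supp\mu \subseteq [0,1]\times\overline{B}_I(r)$. By the proof of Proposition~\ref{P:uniquesol}, $L_t(\mu)\in\overline{B}_I(r)$ for all $t\in(0,1]$, so it suffices to work inside the $d_\infty$-closed bounded set $S:=\overline{B}_I(r)$, on which $d_\infty$ induces the same topology as the norm by \cite{thompson}. Fix $t_0\in(0,1]$ and set $\epsilon = t_0/2$. Combining Lemma~\ref{powercontract} with Proposition~\ref{P:principalcontr}, the contraction coefficient of $F_t$ on $S$ can be bounded by
\begin{equation*}
\rho(t) \le \rho_1(t)+\frac{\log\frac{te^{-r}+(1-t)e^{2r(1-\rho_1(t))}}{te^{-r}+(1-t)}}{2r},\qquad \rho_1(t)=\frac{\log\frac{e^{3r}(1-t)+t}{e^{r}(1-t)+t}}{2r},
\end{equation*}
which is continuous and strictly less than $1$ on $(0,1]$; in particular $\rho := \sup_{t\in[\epsilon,1]}\rho(t) < 1$, so $F_t$ is a strict $d_\infty$-contraction on $S$ uniformly for $t\in[\epsilon,1]$.

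Next I would establish that, for each fixed $X\in S$, the map $t\mapsto F_t(X)$ is continuous in the norm topology. From formula \eqref{principalf}, $f_{s,t}(x)$ is jointly continuous in $(s,t,x)$ on $[0,1]\times[0,1]\times(0,\infty)$, and therefore $M_{s,t}(X,A)=X^{1/2}f_{s,t}(X^{-1/2}AX^{-1/2})X^{1/2}$ is norm continuous in $(s,t,A)$ for $(s,A)\in\supp\mu$. Moreover the integrand is uniformly bounded in operator norm on $\supp\mu$ because of the arithmetic-harmonic mean bounds \eqref{eq:maxmininp}. Hence Theorem~\ref{T:domconv} (dominated convergence for the weak operator Pettis integral) yields $F_{t}(X)\to F_{t_0}(X)$ in the weak operator topology as $t\to t_0$, and since the bounds are uniform the convergence actually holds in $d_\infty$ (equivalently in norm on the bounded set $S$).

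Finally I would run the fixed point perturbation. For $t\in[\epsilon,1]$, using $L_t(\mu)=F_t(L_t(\mu))$ and $L_{t_0}(\mu)=F_{t_0}(L_{t_0}(\mu))$,
\begin{align*}
d_\infty(L_t(\mu), L_{t_0}(\mu))
&\le d_\infty(F_t(L_t(\mu)),F_t(L_{t_0}(\mu))) + d_\infty(F_t(L_{t_0}(\mu)),F_{t_0}(L_{t_0}(\mu)))\\
&\le \rho\, d_\infty(L_t(\mu),L_{t_0}(\mu)) + d_\infty(F_t(L_{t_0}(\mu)),F_{t_0}(L_{t_0}(\mu))),
\end{align*}
so that
\begin{equation*}
d_\infty(L_t(\mu),L_{t_0}(\mu)) \le \frac{1}{1-\rho}\, d_\infty(F_t(L_{t_0}(\mu)),F_{t_0}(L_{t_0}(\mu))),
\end{equation*}
and the right-hand side tends to $0$ as $t\to t_0$ by the previous step. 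This gives continuity at $t_0$.

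The main obstacle is that the contraction coefficient $\rho(t)$ approaches $1$ as $t\to 0^+$ (indeed $\rho_1(0)=1$), so one cannot obtain a single uniform contraction constant on all of $(0,1]$. This is precisely why the argument has to be localized to compact subintervals $[\epsilon,1]$ and why the statement excludes $t=0$; extending continuity (or its replacement) down to $t=0$ is a genuinely different matter that is taken up in the remainder of the paper.
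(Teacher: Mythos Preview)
Your overall strategy is precisely what lies behind the paper's proof: the paper simply invokes a general result on continuity of fixed points of pointwise continuous families of strict contractions (citing \cite{neeb}), and you have essentially unfolded that black box. The localization to compact subintervals $[\epsilon,1]$ to secure a uniform contraction constant and the fixed point perturbation inequality are both correct.

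There is, however, a real gap in your second step. You conclude from Theorem~\ref{T:domconv} that $F_t(X)\to F_{t_0}(X)$ in the weak operator topology, and then assert that ``since the bounds are uniform the convergence actually holds in $d_\infty$.'' This implication is false in general: weak operator convergence together with uniform norm bounds does not imply norm convergence (take, e.g., rank-one projections $P_n=\langle\,\cdot\,,e_n\rangle e_n$ for an orthonormal sequence $(e_n)$, which converge weakly to $0$ while $\|P_n\|=1$). The desired norm convergence $F_t(X)\to F_{t_0}(X)$ is nonetheless true, but for a different reason: since $(s,t,x)\mapsto f_{s,t}(x)$ is jointly continuous on the compact set $[0,1]^2\times[e^{-2r},e^{2r}]$, it is uniformly continuous there, so $\sup_{s\in[0,1],\,x\in[e^{-2r},e^{2r}]}|f_{s,t}(x)-f_{s,t_0}(x)|\to 0$ as $t\to t_0$. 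By the functional calculus this gives $\|M_{s,t}(X,A)-M_{s,t_0}(X,A)\|\to 0$ uniformly over $(s,A)\in\supp\mu$, and the elementary bound $\bigl\|\int h\,d\mu\bigr\|\le\sup_{\supp\mu}\|h\|$ for the weak operator Pettis integral then yields norm convergence of $F_t(X)$. With this correction your argument goes through and is a fully spelled-out version of the paper's one-line appeal to \cite{neeb}.
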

\begin{proof}
The induced operator means $L_{t}(\mu)$ are fixed points of mappings $f(X)$ given in \eqref{meanequf} which are strict contractions on any bounded subset of $S\subseteq\mathbb{P}$ according to Lemma~\ref{powercontract}. Therefore on every bounded set $S\subseteq\mathbb{P}$, $L_{t}(\mu)$ varies continuously in $t$ with respect to the topology generated by the metric $d_\infty$ due to the continuity of fixed points of pointwisely continuous families of strict contractions \cite{neeb}.
\end{proof}

\begin{lemma}\label{maxmininduced}
Let $\mu\in\mathscr{P}([0,1]\times\mathbb{P})$. Then for $t\in(0,1]$ we have
\begin{equation*}
kI\leq L_{t}(\mu)\leq \int_{[0,1]\times\mathbb{P}}Ad\mu(s,A),
\end{equation*}
where $k>0$ is such that $kI\leq B$ for any $B\in\mathbb{P}$ such that $\mu$ is supported in $[0,1]\times\{B\}$.
\end{lemma}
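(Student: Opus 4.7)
The plan is to exploit the fixed-point characterization of $L_t(\mu)$ together with the sandwich bounds on $M_{s,t}$ provided by \eqref{eq:maxmininp}. By Proposition~\ref{P:reprprincipalf} the representing function $f_{s,t}$ lies in $\mathfrak{m}(t)$, so Lemma~\ref{maxmininp} yields, for every $A,X\in\mathbb{P}$ and every $s\in[0,1]$, the operator inequalities
\begin{equation*}
\left[(1-t)X^{-1}+tA^{-1}\right]^{-1}\leq M_{s,t}(X,A)\leq (1-t)X+tA.
\end{equation*}
Write $X^{*}:=L_t(\mu)$, which by definition satisfies $X^{*}=\int_{[0,1]\times\mathbb{P}}M_{s,t}(X^{*},A)\,d\mu(s,A)$.

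For the upper bound, integrate the right-hand inequality above against $\mu$. Since $\mu$ is a probability measure on $[0,1]\times\mathbb{P}$, linearity of the weak operator Pettis integral gives
\begin{equation*}
X^{*}\leq\int_{[0,1]\times\mathbb{P}}\bigl[(1-t)X^{*}+tA\bigr]\,d\mu(s,A)=(1-t)X^{*}+t\!\int_{[0,1]\times\mathbb{P}}A\,d\mu(s,A).
\end{equation*}
Rearranging and dividing by $t>0$ yields $X^{*}\leq\int_{[0,1]\times\mathbb{P}}A\,d\mu(s,A)$, which is the claimed upper bound.

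For the lower bound, by hypothesis $kI\leq A$ whenever $\mu$ is supported in $[0,1]\times\{A\}$, so $A^{-1}\leq k^{-1}I$ on $\supp\mu$. Order-reversal of inversion gives
\begin{equation*}
\left[(1-t)(X^{*})^{-1}+tA^{-1}\right]^{-1}\geq\left[(1-t)(X^{*})^{-1}+tk^{-1}I\right]^{-1}
\end{equation*}
pointwise on $\supp\mu$; integrating and using that the right-hand side is independent of $(s,A)$, we obtain $X^{*}\geq\bigl[(1-t)(X^{*})^{-1}+tk^{-1}I\bigr]^{-1}$. Inverting again (order-reversing) gives $(X^{*})^{-1}\leq(1-t)(X^{*})^{-1}+tk^{-1}I$, i.e.\ $t(X^{*})^{-1}\leq tk^{-1}I$, and since $t>0$ this yields $(X^{*})^{-1}\leq k^{-1}I$, equivalently $X^{*}\geq kI$.

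There is no real obstacle here; the only small points to verify are that the integration step is legitimate (the integrands $(1-t)X^{*}+tA$ and the constant $[(1-t)(X^{*})^{-1}+tk^{-1}I]^{-1}$ are weak operator Pettis integrable with respect to $\mu$, which follows from the corollary preceding Lemma~\ref{L:fubini} since $\supp\mu$ is bounded in $[0,1]\times\mathbb{P}$), and that the scalar hypothesis $kI\leq B$ for $B$'s in the second marginal of $\supp\mu$ is exactly what is needed to pull the bound out of the integral against $\mu$.
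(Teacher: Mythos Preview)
Your proof is correct and in fact more direct than the paper's. Both arguments start from the sandwich
\[
\left[(1-t)X^{-1}+tA^{-1}\right]^{-1}\leq M_{s,t}(X,A)\leq (1-t)X+tA,
\]
but from there the strategies diverge. For the upper bound the paper defines $g(X)=(1-t)X+t\int A\,d\mu$, observes $f\leq g$, and passes to the limit in the iterates $f^{\circ n}\leq g^{\circ n}$; you instead plug the fixed point $X^{*}$ directly into the inequality and cancel $(1-t)X^{*}$, which avoids the iteration entirely. For the lower bound the paper appeals to the ball-invariance argument from Proposition~\ref{P:uniquesol}, whereas you use the harmonic-mean side of the sandwich together with $A^{-1}\leq k^{-1}I$ on $\supp\mu$, then invert and cancel $(1-t)(X^{*})^{-1}$. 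Your route is self-contained and gives the sharp constant $k$ in one line; the paper's route reuses machinery already in place but requires one to adapt the ball argument to the order interval $[kI,KI]$ to actually get $kI$ rather than the ball's lower endpoint.
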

\begin{proof}
By Lemma~\ref{maxmininp} and Proposition~\ref{P:reprprincipalf} we have that
\begin{equation}\label{eq:maxmininducedd}
M_{s,t}(X,A)\leq (1-t)X+tA.
\end{equation}
which yields
\begin{equation*}
f(X):=\int_{[0,1]\times\mathbb{P}}M_{s,t}(X,A)d\nu(s,A)\leq (1-t)X+t\int_{[0,1]\times\mathbb{P}}Ad\mu(s,A).
\end{equation*}
If we define $g(X):=(1-t)X+t\int_{[0,1]\times\mathbb{P}}Ad\mu(s,A)$, then some simple calculation reveals that for any $X\in\mathbb{P}$ we have
$$\lim_{n\to\infty}g^{\circ n}(X)=\int_{[0,1]\times\mathbb{P}}Ad\mu(s,A).$$
From this we have
$$L_{t}(\mu)=\lim_{n\to\infty}f^{\circ n}(X)\leq \lim_{n\to\infty}g^{\circ n}(X)=\int_{[0,1]\times\mathbb{P}}Ad\mu(s,A).$$

The lower bound follows from the proof of Proposition~\ref{P:uniquesol} where for any large enough bounded ball $S\subseteq\mathbb{P}$ we have $f(S)\subseteq S$.
\end{proof}

Let us recall the strong topology on $\mathbb{P}$. The positive definite partial order $\leq$ is strongly continuous, so if $A_n\to A$, $B_n\to B$ and $A_n\leq B_n$ then $A\leq B$. Also if $A_n$ is a monotonically decreasing net in $\mathbb{P}$ with respect to $\leq$ and it is bounded from below, then it converges strongly to the infimum of $A_n$. Similarly if $B_n$ monotonically increases and is bounded from above, then $B_n$ converges strongly to its supremum \cite{weidman}.

\begin{theorem}\label{inducedconv}
Let $\mu\in\mathscr{P}([0,1]\times\mathbb{P})$. Then there exists $X_0\in\mathbb{P}$ such that
\begin{equation*}
\lim_{t\to 0+}L_{t}(\mu)=X_0.
\end{equation*}
\end{theorem}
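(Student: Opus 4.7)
The plan is to exploit the monotonicity of the family $t\mapsto L_t(\mu)$ together with the uniform lower bound from Lemma~\ref{maxmininduced}, and then invoke the strong convergence of bounded monotone nets on $\mathbb{P}$ recalled just above the theorem statement.

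First I would observe that, by Proposition~\ref{MPro}(5), the assignment $t\mapsto L_t(\mu)$ is nondecreasing in the positive definite order on $(0,1]$. Consequently, as $t$ decreases to $0$, the net $\{L_t(\mu)\}_{t\in(0,1]}$ is monotonically decreasing in the positive definite partial order.

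Next I would invoke Lemma~\ref{maxmininduced}, which provides uniform bounds
\begin{equation*}
kI \leq L_t(\mu) \leq \int_{[0,1]\times\mathbb{P}} A\, d\mu(s,A),
\end{equation*}
where $k>0$ is chosen so that $kI \leq B$ for every $B\in\mathbb{P}$ over which $\mu$ is supported; such a $k$ exists because $\supp \mu$ is bounded in $[0,1]\times\mathbb{P}$. In particular the decreasing net $\{L_t(\mu)\}_{t\in(0,1]}$ is bounded below by the positive definite operator $kI$.

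Now I would apply the recalled fact that a monotonically decreasing net in $\mathbb{P}$ that is bounded from below converges in the strong operator topology to its infimum. This gives some self-adjoint operator $X_0 \in S(E)$ with $L_t(\mu) \downarrow X_0$ strongly as $t\to 0+$. Finally, strong continuity of the positive definite order (also recalled in the excerpt) applied to $kI \leq L_t(\mu)$ yields $kI \leq X_0$, so $X_0 \in \mathbb{P}$, which is the desired conclusion. I do not anticipate a main obstacle here — the theorem is essentially a direct corollary of the monotonicity of the one-parameter family (Proposition~\ref{MPro}(5)) and the uniform two-sided bounds (Lemma~\ref{maxmininduced}), combined with the general strong-convergence principle for monotone bounded nets of self-adjoint operators.
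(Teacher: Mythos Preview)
Your proof is correct and takes essentially the same approach as the paper: invoke Proposition~\ref{MPro}(5) for monotonicity, Lemma~\ref{maxmininduced} for the uniform lower bound $kI$, and conclude strong convergence of the decreasing bounded net. Your additional remark that $kI\leq X_0$ ensures $X_0\in\mathbb{P}$ is a nice explicit touch the paper leaves implicit.
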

\begin{proof}
By property (5) in Proposition~\ref{MPro} we have that $t\mapsto L_{t}(\mu)$ is a decreasing net bounded from below by $kI$ by Lemma~\ref{maxmininduced}, hence it is convergent in the strong topology as $t\to 0+$.
\end{proof}

\begin{definition}[Lambda operator means]
Let $\Lambda_\nu(\sigma):=\lim_{t\to 0+}L_{t,\nu}(\sigma)$ and call it the $\sigma$-weighted $\nu$-lambda operator mean.

Also let $\mu\in\mathscr{P}([0,1]\times\mathbb{P})$. Then we denote $\Lambda(\mu):=\lim_{t\to 0+}L_{t}(\mu)$ and call it the $\mu$-weighted lambda operator mean.
\end{definition}

\begin{remark}
If we take the one parameter family of matrix power means $P_t(\omega;{\Bbb A})$, which are weighted by finitely supported probability measures $\omega$ on the points ${\Bbb A}$, then it is known that $\lim_{t\to 0}P_t(\omega;{\Bbb A})$ is the Karcher mean $\Lambda(\omega;\mathbb{A})$ see \cite{limpalfia} for the finite dimensional setting and \cite{lawsonlim1} for the general infinite dimensional case.
\end{remark}

\begin{theorem}\label{lambdaprop}
Let $\mu,\mu_1,\mu_2\in\mathscr{P}([0,1]\times\mathbb{P})$ and $t\in(0,1]$. Then
\begin{itemize}
 \item[(1)] $\Lambda(\mu)=A$ if $\mu$ is only supported in $[0,1]\times\{A\}$;
 \item[(2)] $\Lambda(\mu_1)\leq \Lambda(\mu_2)$ if $\mu_1\leq
 \mu_2;$
 \item[(3)] $\Lambda(X\mu X^{*})=X\Lambda(\mu)X^{*}$ for any $X\in \mathrm{GL}(E);$
 \item[(4)] Suppose $$\int_{[0,1]\times\mathbb{P}}M_{s,t}(X,A)d\mu_1(s,A)\leq \int_{[0,1]\times\mathbb{P}}M_{s,t}(X,A)d\mu_2(s,A)$$
 for all $t\in[0,1]$. Then $\Lambda(\mu_1)\leq \Lambda(\mu_2);$
 \item[(5)] If $d\mu_2(s,A)=d\mu_1(s,g(s,A))$ where $g$ is measurable for fixed $s$, then $(1-u)\Lambda(\mu_1)+u\Lambda(\mu_2)\leq
 \Lambda((1-u)\mu_1+u\mu_2)$ for any $u\in [0,1];$
 \item[(6)] If $d\mu_2(s,A)=d\mu_1(s,g(s,A))$ where $g$ is measurable for all fixed $s$, then $d_{\infty}(\Lambda(\mu_1), \Lambda(\mu_2))\leq \underset{\mu_2\text{ is supported on }\{s\}\times\{A\}}{\sup}\{d_{\infty}(A,g(s,A))\};$
 \item[(7)]$\Phi(\Lambda(\mu))\leq
\Lambda(\Phi(\mu))$ for any measurable positive unital linear map $\Phi$, where $\Phi(\mu)(s,A):=\mu(s,\Phi^{-1}(A)).$
 \item[(8)]
$kI\leq\Lambda(\mu)\leq \int_{[0,1]\times\mathbb{P}}Ad\mu(s,A)$ where $k>0$ is such that $kI\leq B$ for any $B\in\mathbb{P}$ such that $\mu$ is supported in $[0,1]\times\{B\}$.
\end{itemize}
\end{theorem}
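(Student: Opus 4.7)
The plan is to deduce each of the eight properties of $\Lambda(\mu)$ from its counterpart for $L_{t}(\mu)$ already established in Proposition~\ref{MPro} and Lemma~\ref{maxmininduced}, and then pass to the limit $t\to 0+$ using Theorem~\ref{inducedconv} together with the strong continuity of the positive definite partial order on $\mathbb{P}$. Recall that $L_t(\mu)\downarrow\Lambda(\mu)$ strongly as $t\to 0+$, because $t\mapsto L_t(\mu)$ is monotonically decreasing in the positive definite order (by property (5) of Proposition~\ref{MPro}) and uniformly bounded below by $kI$ (by Lemma~\ref{maxmininduced}).

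For properties (1), (2), (4) and (8) the argument is simply to apply the corresponding statement about $L_t(\mu)$ for each $t\in(0,1]$ and pass to the strong limit; the order $\leq$ is strongly closed, so inequalities survive. For (3), note that conjugation $Z\mapsto XZX^{*}$ with fixed $X\in \mathrm{GL}(E)$ is strongly continuous (indeed $XZ_{n}X^{*}v=X(Z_{n}(X^{*}v))\to XZX^{*}v$ whenever $Z_{n}\to Z$ strongly), so Proposition~\ref{MPro}(3) transfers to $\Lambda$. For (5), strong convergence is compatible with linear combinations, so one obtains $(1-u)\Lambda(\mu_{1})+u\Lambda(\mu_{2})\leq \Lambda((1-u)\mu_{1}+u\mu_{2})$ directly from Proposition~\ref{MPro}(6).

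For property (6), the Thompson metric estimate must be recovered from strong convergence: by Proposition~\ref{MPro}(7) one has $d_{\infty}(L_{t}(\mu_{1}),L_{t}(\mu_{2}))\leq C$ where $C:=\sup\{d_{\infty}(A,g(s,A))\}$, which in turn is equivalent via Lemma~\ref{thompsonproperties}(5) to the two-sided sandwich $e^{-C}L_{t}(\mu_{2})\leq L_{t}(\mu_{1})\leq e^{C}L_{t}(\mu_{2})$ for all $t\in(0,1]$. The strong continuity of $\leq$ preserves these sandwiching inequalities in the limit, yielding $e^{-C}\Lambda(\mu_{2})\leq \Lambda(\mu_{1})\leq e^{C}\Lambda(\mu_{2})$ and hence $d_{\infty}(\Lambda(\mu_{1}),\Lambda(\mu_{2}))\leq C$. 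This is the method by which Thompson distance estimates are transferred through a strong (not norm) limit.

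The one genuinely subtle point is property (7): because $\Phi$ is only assumed to be a positive unital linear map, it is not assumed strongly continuous, so one cannot simply take the strong limit inside $\Phi(L_{t}(\mu))$. The key trick is to avoid the limit on that side entirely. Since $L_{t}(\mu)$ is decreasing in $t$, we have $\Lambda(\mu)\leq L_{t}(\mu)$ for every $t\in(0,1]$; positivity of $\Phi$ then gives $\Phi(\Lambda(\mu))\leq \Phi(L_{t}(\mu))\leq L_{t}(\Phi(\mu))$, by Proposition~\ref{MPro}(8). Only the rightmost expression has its limit taken, and by Theorem~\ref{inducedconv} it converges strongly to $\Lambda(\Phi(\mu))$, which proves the inequality. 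This circumvention of the missing strong continuity of $\Phi$ is what I expect to be the main obstacle; the remaining items are direct applications of Proposition~\ref{MPro} combined with strong-limit closure of the order.
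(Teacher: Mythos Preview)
Your proposal is correct and follows the same approach as the paper, which simply states that each property follows from Proposition~\ref{MPro} and Lemma~\ref{maxmininduced} by taking the limit $t\to 0+$. Your treatment is in fact more careful than the paper's one-line proof: the conversion of the Thompson metric bound in (6) into order inequalities before passing to the strong limit, and the monotonicity trick in (7) that bypasses any need for strong continuity of $\Phi$, are details the paper leaves implicit.
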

\begin{proof}
Each of the properties easily follows from Proposition~\ref{MPro} and Lemma~\ref{maxmininduced} by taking the limit $t\to 0+$.
\end{proof}

Now we turn to the study of the generalized Karcher equation
\begin{equation}\label{karcherequx}
\int_{[0,1]\times\mathbb{P}}\l_s(X^{-1/2}AX^{-1/2})d\mu(s,A)=0
\end{equation}
for a $\mu\in\mathscr{P}([0,1]\times\mathbb{P})$. We denote by $K(\mu)$ the set of all solutions $X$ of \eqref{karcherequx} in $\mathbb{P}$.

\begin{lemma}
Operator multiplication is strongly continuous on any bounded set.
\end{lemma}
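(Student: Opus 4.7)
The plan is to prove joint strong continuity of the multiplication map $(A,B)\mapsto AB$ restricted to sets where both factors have bounded operator norm. So suppose $A_n\to A$ and $B_n\to B$ in the strong operator topology, with $\|A_n\|\leq M$ uniformly for some $M<\infty$ (this is what ``bounded set'' supplies, and it is precisely the hypothesis under which joint SOT-continuity can hold). The goal is to show $A_nB_n\to AB$ strongly, i.e.\ $A_nB_nx\to ABx$ in norm for every fixed $x\in E$.

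First I would apply the triangle inequality in the standard split
\begin{equation*}
\|A_nB_nx-ABx\|\leq \|A_n(B_nx-Bx)\|+\|(A_n-A)Bx\|\leq M\|B_nx-Bx\|+\|(A_n-A)(Bx)\|.
\end{equation*}
The first term tends to zero because $B_n\to B$ strongly (so $B_nx\to Bx$ in norm) and $\|A_n\|\leq M$ uniformly. The second term tends to zero because $A_n\to A$ strongly, applied to the fixed vector $Bx\in E$. This handles nets just as well as sequences, so the argument gives continuity of multiplication from the product of bounded subsets of $\mathfrak{B}(E)$ (in SOT) to $\mathfrak{B}(E)$ (in SOT).

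The only subtle point, and really the whole content of the ``bounded set'' hypothesis, is the uniform bound $\|A_n\|\leq M$; without it the factor $\|A_n\|$ in the first term cannot be absorbed, and indeed joint multiplication is not SOT-continuous on all of $\mathfrak{B}(E)\times\mathfrak{B}(E)$. In the applications that follow, the relevant bounded sets sit inside order intervals $[aI,bI]\subset\mathbb{P}$, for which the uniform norm bound is immediate from $0\leq X\leq bI \Rightarrow \|X\|\leq b$ (as already noted in the excerpt). So the two-line triangle-inequality argument above is all that is required, and I do not anticipate any real obstacle.
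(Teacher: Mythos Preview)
Your proof is correct and is essentially identical to the paper's own argument: both use the split $A_nB_n-AB=A_n(B_n-B)+(A_n-A)B$, bound the first term via the uniform norm bound on $A_n$, and handle the second term by strong convergence applied to the fixed vector $Bx$. Your additional remarks on why the boundedness hypothesis is essential are accurate and go slightly beyond what the paper records.
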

\begin{proof}
Let $A_l\to A,B_l\to B$ strongly, and $\left\|A_l\right\|,\left\|B_l\right\|\leq K$. Then
\begin{equation*}
\left\|(A_lB_l-AB)x\right\|\leq \left\|A_l(B_l-B)x\right\|+\left\|(A_l-A)Bx\right\|\leq K\left\|(B_l-B)x\right\|+\left\|(A_l-A)Bx\right\|,
\end{equation*}
so $\left\|(A_lB_l-AB)x\right\|\to 0$ as well.

\end{proof}

\begin{lemma}
Let $Q$ be an open or closed subset of $\mathbb{R}$ and let $f:Q\to \mathbb{R}$ be continuous and bounded. Then $f$ is strong operator continuous on the set $S(E)$ of self adjoint operators with spectrum in $Q$.
\end{lemma}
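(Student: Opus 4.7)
The natural approach is to reduce the problem to the polynomial case via uniform approximation, then invoke the previous lemma on strong operator continuity of multiplication on bounded sets.

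Suppose $A_l \to A$ strongly with $A_l$ and $A$ self-adjoint and all spectra contained in $Q$. The Banach-Steinhaus theorem gives $K := \sup_l \|A_l\| < \infty$, and also $\|A\| \leq K$, so every spectrum lies in the compact interval $[-K,K]$, hence in $Q \cap [-K,K]$. First I would extend $f|_{Q\cap[-K,K]}$ to a bounded continuous function $\wt f$ on all of $[-K,K]$: when $Q$ is closed in $\mathbb{R}$ this is immediate from the Tietze extension theorem applied to the closed subset $Q \cap [-K,K]$, while for open $Q$ one uses that each relevant spectrum is a compact subset of the open set $Q\cap[-K,K]$ together with a suitable bump/Tietze construction. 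Since $f(B)$ depends only on the restriction of $f$ to the spectrum of $B$, replacing $f$ by $\wt f$ does not affect any of the operator values involved.

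Next I would appeal to the Weierstrass approximation theorem: for every $\epsilon > 0$ there exists a polynomial $p$ with $\sup_{x\in[-K,K]} |\wt f(x) - p(x)| < \epsilon$. For polynomials, strong operator continuity on the norm-bounded set $\{B\in S(E) : \|B\|\leq K\}$ is straightforward by induction on degree, using that addition and scalar multiplication trivially preserve strong convergence, and that operator multiplication is strongly continuous on norm-bounded sets by the previous lemma.

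Finally, the claim follows from the standard three-epsilon decomposition
\begin{equation*}
\|[f(A_l)-f(A)]x\| \leq \|[f(A_l)-p(A_l)]x\| + \|[p(A_l)-p(A)]x\| + \|[p(A)-f(A)]x\|.
\end{equation*}
By the continuous functional calculus, $\|\wt f(B)-p(B)\| \leq \sup_{t\in[-K,K]}|\wt f(t)-p(t)| < \epsilon$ whenever the spectrum of $B$ is contained in $[-K,K]$, which bounds the outer terms by $\epsilon\|x\|$ uniformly in $l$. The middle term vanishes as $l\to\infty$ by the polynomial case applied to the norm-bounded net $\{A_l\}$. Letting $l\to\infty$ and then $\epsilon\to 0$ yields the desired strong convergence $f(A_l)x \to f(A)x$. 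The main obstacle is really the extension step for open $Q$; once $f$ is realized as a continuous bounded function on a compact interval, the rest is a classical functional-calculus argument.
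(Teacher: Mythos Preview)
The paper does not give a self-contained argument here; it simply cites Theorem~3.6 of Kadison's paper on strong continuity of operator functions. Your proposal is therefore a genuinely different (and more explicit) approach. For closed $Q$ your argument is correct and is the classical one: Tietze extends $f$ to the compact interval $[-K,K]$, Weierstrass gives uniform polynomial approximants there, and the three-epsilon decomposition together with the previous lemma finishes. For the paper's downstream applications only closed order intervals $[e^{-m}I,e^{m}I]$ are used, so the closed case already suffices for everything the paper actually needs.

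There is, however, a real gap in your treatment of open $Q$. You assert that a ``suitable bump/Tietze construction'' produces a continuous extension $\wt f$ on $[-K,K]$ agreeing with $f$ on all the relevant spectra. This need not exist. Even though each $\sigma(A_l)$ is compact in $Q$ and $\sigma(A)\subset Q$, the union $\bigcup_l\sigma(A_l)$ can accumulate at a boundary point of $Q$ where $f$ has no continuous extension. Concretely, on $E=\ell^2$ take $Q=(0,1)$, $f(x)=\sin(1/x)$, $A=\tfrac12 I$, and let $A_l$ be diagonal with entries $\tfrac12$ except for $1/l$ in the $l$-th slot. Then $A_l\to A$ strongly and all spectra lie in $Q$, yet no polynomial $p$ satisfies $\sup_l\|f(A_l)-p(A_l)\|<\epsilon$, because $|f(1/l)-p(1/l)|=|\sin l - p(1/l)|$ cannot be made uniformly small as $p(1/l)\to p(0)$ while $\sin l$ oscillates. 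So the outer terms in your three-epsilon estimate are not uniformly controlled, and the argument breaks down. (The lemma is nevertheless true in this example: $\|(f(A_l)-f(A))x\|\le 2|x_l|\to 0$, which shows the role of boundedness of $f$.) Kadison's proof handles the open case by a different mechanism, using boundedness of $f$ together with spectral projections to control the contribution from the part of the spectrum near $\partial Q$, rather than seeking a single global polynomial approximant. If you want a self-contained proof for open $Q$, that is the missing idea.
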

\begin{proof}
Special case of Theorem 3.6 in \cite{kadison}.
\end{proof}

The consequece of the above is the following
\begin{lemma}
The functions
\begin{enumerate}
	\item $x^{-1}$,
	\item $f_{s,t}(x)=\l^{-1}_s(t\l_s(x))$ for $0\leq t\leq 1$,
	\item the mean $M_{s,t}(A,B)$ for $0\leq t\leq 1$,
\end{enumerate}
are strongly continuous on the order intervals $[e^{-m}I,e^{m}I]$ for any $m>0$.
\end{lemma}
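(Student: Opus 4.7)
The plan is to reduce all three claims to the two preceding lemmas: the strong operator continuity of bounded continuous real functions on operators with spectrum in the domain, and the strong continuity of multiplication on bounded sets. The work is essentially bookkeeping the spectra of the intermediate operators.

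For (1), the real function $x\mapsto x^{-1}$ is continuous on $[e^{-m},e^m]$ and bounded there (by $e^m$), and the spectrum of every $X\in[e^{-m}I,e^mI]$ lies in $[e^{-m},e^m]$. The preceding lemma on strong continuity of bounded continuous real functions applies verbatim.

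For (2), by formula \eqref{principalf}, $f_{s,t}(x)$ is a rational function of $x$ whose denominator $(1-t)(1-s)x+t+s(1-t)$ is strictly positive on $(0,\infty)$ for $s,t\in[0,1]$; equivalently, as noted in \eqref{eq:principalrepr}, $f_{s,t}$ is a convex combination of a constant and a shifted harmonic function, so it is continuous on $(0,\infty)$. Restricted to the compact interval $[e^{-m},e^m]$ it is therefore continuous and bounded, and the same lemma gives strong continuity of $X\mapsto f_{s,t}(X)$ on $[e^{-m}I,e^mI]$.

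For (3), I would decompose $M_{s,t}(A,B)=A^{1/2}f_{s,t}(A^{-1/2}BA^{-1/2})A^{1/2}$ into elementary operations and apply parts (1), (2), and the lemma on strong continuity of multiplication on bounded sets. Concretely, if $A,B\in[e^{-m}I,e^mI]$ then $A^{\pm 1/2}\in[e^{-m/2}I,e^{m/2}I]$, and the sandwich estimate gives
\begin{equation*}
e^{-2m}I\le A^{-1/2}BA^{-1/2}\le e^{2m}I.
\end{equation*}
Thus, for a strongly convergent net $(A_\alpha,B_\alpha)\to(A,B)$ within $[e^{-m}I,e^mI]^2$: (a) $A_\alpha^{\pm 1/2}\to A^{\pm 1/2}$ strongly by applying the scalar lemma to $\sqrt{x}$ and $1/\sqrt{x}$, both continuous and bounded on $[e^{-m},e^m]$; (b) $A_\alpha^{-1/2}B_\alpha A_\alpha^{-1/2}\to A^{-1/2}BA^{-1/2}$ strongly by two successive applications of the multiplication lemma, since all factors remain norm bounded by $e^{3m/2}$; (c) $f_{s,t}(A_\alpha^{-1/2}B_\alpha A_\alpha^{-1/2})\to f_{s,t}(A^{-1/2}BA^{-1/2})$ strongly by part (2) applied on the order interval $[e^{-2m}I,e^{2m}I]$, together with the fact that $f_{s,t}$ is bounded on $[e^{-2m},e^{2m}]$; and finally (d) sandwiching by $A_\alpha^{1/2}$ on both sides converges strongly by the multiplication lemma once more.

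The only subtle point is ensuring that each intermediate operator stays in a fixed bounded order interval so that both preceding lemmas are applicable; the explicit spectral bounds above take care of this uniformly. No genuine obstacle arises: the proof is a clean composition argument, and the main content was already packaged into the two preceding lemmas.
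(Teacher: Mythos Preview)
Your proposal is correct and is exactly the argument the paper intends: the lemma is stated in the paper as an immediate ``consequence of the above'' two lemmas (strong continuity of bounded continuous scalar functions and of multiplication on bounded sets) without an explicit proof, and you have simply filled in the routine spectral bookkeeping that the paper leaves to the reader.
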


\begin{lemma}\label{lemstrongconv}
Let $V\in S(E)$. Then
\begin{equation}
\lim_{(t,U)\to(0,V)}\frac{\l^{-1}_s(tU)-I}{t}=V,
\end{equation}
in the strong operator topology.
\end{lemma}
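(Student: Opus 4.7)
My plan is to reduce the statement to an explicit calculation of $\l_s^{-1}$. Solving the scalar equation $y=(x-1)/((1-s)x+s)$ for $x$ gives
\[
\l_s^{-1}(y)=\frac{1+sy}{1-(1-s)y},
\]
a ratio of two polynomials in $y$ that commute under any operator substitution. Hence the Riesz-Dunford functional calculus gives, for $t$ small enough that $I-(1-s)tU$ is invertible,
\[
\l_s^{-1}(tU)=(I+stU)\bigl(I-(1-s)tU\bigr)^{-1}.
\]
The key algebraic cancellation $(I+stU)-(I-(1-s)tU)=tU$ then collapses the difference quotient to the clean identity
\[
\frac{\l_s^{-1}(tU)-I}{t}=U\bigl(I-(1-s)tU\bigr)^{-1}.
\]

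With this formula in hand, strong convergence as $(t,U)\to(0,V)$ is a routine continuity argument. I would use the decomposition
\[
U(I-(1-s)tU)^{-1}-V=(U-V)(I-(1-s)tU)^{-1}+V\bigl[(I-(1-s)tU)^{-1}-I\bigr]
\]
together with the Neumann-type identity $(I-(1-s)tU)^{-1}-I=(1-s)tU(I-(1-s)tU)^{-1}$. For $(t,U)$ in a neighbourhood of $(0,V)$ in which $U$ remains norm-bounded, $(I-(1-s)tU)^{-1}$ has a uniform operator-norm bound, so the second term tends to $0$ in operator norm as $t\to 0$. The first term tends to $0$ strongly, since $(U-V)\to 0$ strongly while the right factor is uniformly norm-bounded; evaluating against a fixed vector, strong convergence is preserved.

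The only real delicacy is that joint strong-operator continuity of composition fails in general, so a norm-boundedness hypothesis on $U$ is essential for the last step. In the context of this section this is automatic: every use of the lemma occurs with $U$ lying in a fixed order interval $[e^{-m}I,e^{m}I]$ (compare the preceding lemma), so $\|U\|$ is uniformly controlled as $U\to V$. Apart from that care with topology, the entire content of the lemma is the explicit formula for $\l_s^{-1}$ plus the elementary cancellation above; there is no analytic difficulty beyond the Neumann expansion.
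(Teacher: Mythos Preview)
Your proof is correct and follows essentially the same route as the paper: compute $\l_s^{-1}$ explicitly (the paper writes it as $1+\frac{x}{1-(1-s)x}$, which is your $\frac{1+sx}{1-(1-s)x}$), obtain the identity $\frac{\l_s^{-1}(tU)-I}{t}=U\bigl(I-(1-s)tU\bigr)^{-1}$, and then pass to the limit. The paper compresses the final step into the single line ``$\lim U[I-(1-s)tU]^{-1}=V$'', whereas you spell out the decomposition and correctly flag that a uniform norm bound on $U$ is needed for strong continuity of the product; this is a genuine point the paper leaves implicit, and your observation that in the application $U$ lives in a fixed order interval is exactly how it is justified.
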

\begin{proof}
The function $\l^{-1}_s$ is the inverse of $\l_s$ and simple calculation shows that
$$\l_s^{-1}(x)=1+\frac{x}{1-(1-s)x}.$$
Now simple calculation shows that
$$\lim_{(t,U)\to(0,V)}\frac{\l^{-1}_s(tU)-I}{t}=\lim_{(t,U)\to(0,V)}U[I-(1-s)tU]^{-1}=V.$$
\end{proof}

\begin{theorem}\label{karchersatisfied}
The lambda operator mean $\Lambda(\mu)$ satisfies the generalized Karcher equation
\begin{equation*}
\int_{[0,1]\times\mathbb{P}}X^{1/2}\l_s(X^{-1/2}AX^{-1/2})X^{1/2}d\mu(s,A)=0.
\end{equation*}
\end{theorem}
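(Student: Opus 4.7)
The plan is to start from the defining equation of $L_t(\mu)$ for $t>0$, rearrange it into a ``pre-Karcher'' form, and pass to the limit $t\to 0+$ by dominated convergence for the weak operator Pettis integral. By definition $X_t := L_t(\mu)$ satisfies $X_t = \int M_{s,t}(X_t,A)\,d\mu(s,A)$. Using Proposition~\ref{P:reprprincipalf} in the form $f_{s,t}(x) = \l_s^{-1}(t\l_s(x))$ together with the algebraic identity $[\l_s^{-1}(tu)-1]/t = u[1-(1-s)tu]^{-1}$ (immediate from $\l_s^{-1}(y) = 1 + y/(1-(1-s)y)$ as in Lemma~\ref{lemstrongconv}), subtracting $X_t$ from both sides and dividing by $t$ yields, for each $t>0$,
\begin{equation*}
0 = \int_{[0,1]\times\mathbb{P}} X_t^{1/2} U_t(s,A)\bigl[I - (1-s)\,t\,U_t(s,A)\bigr]^{-1} X_t^{1/2}\, d\mu(s,A),
\end{equation*}
where $U_t(s,A) := \l_s(X_t^{-1/2} A X_t^{-1/2})$.

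Next I would show that the integrand converges in the strong operator topology to $X_0^{1/2}\l_s(X_0^{-1/2}AX_0^{-1/2})X_0^{1/2}$ as $t \to 0+$, where $X_0 := \Lambda(\mu)$. By Theorem~\ref{inducedconv}, $X_t \to X_0$ strongly (as a monotone bounded net), and by Lemma~\ref{maxmininduced} the entire net $\{X_t\}_{t\in(0,1]}$ lies in some fixed order interval $[kI, KI]$ with $k>0$. Strong operator continuity of $x^{\pm 1/2}$ and of $\l_s$ on such intervals (via the lemma just before Lemma~\ref{lemstrongconv}), together with strong continuity of operator multiplication on norm-bounded sets, then gives $X_t^{-1/2}AX_t^{-1/2} \to X_0^{-1/2}AX_0^{-1/2}$ strongly uniformly for $A$ in $\supp\mu$, hence $U_t(s,A) \to \l_s(X_0^{-1/2}AX_0^{-1/2})$ strongly; and for $t$ small enough $\|(1-s)tU_t\|<1/2$, so $[I-(1-s)tU_t]^{-1}$ is uniformly norm-bounded and converges strongly to $I$.

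Finally I would apply Theorem~\ref{T:domconv}. The bounds $X_t \in [kI,KI]$, $A$ in a bounded subset of $\mathbb{P}$ on $\supp\mu$, and the elementary estimate \eqref{eq:logbounds} for $\l_s$ yield a uniform operator-norm bound $C'$ on the integrand $H_t(s,A)$ for all sufficiently small $t>0$; consequently $|\langle H_t(s,A)x,y\rangle|\le C'\|x\|\|y\|$ is a constant (hence integrable) majorant on the finite measure space $([0,1]\times\mathbb{P},\mu)$. Strong convergence of $H_t(s,A)$ implies pointwise, hence in-measure, convergence of the scalar functions $\langle H_t(s,A)x,y\rangle$, so Theorem~\ref{T:domconv} passes the limit through the integral, preserving vanishing, and delivers the generalized Karcher equation for $X_0 = \Lambda(\mu)$. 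The main obstacle is the bookkeeping needed to establish strong operator continuity of the composite map $X \mapsto \l_s(X^{-1/2}AX^{-1/2})$ uniformly on the order interval $[kI,KI]$ and to secure the uniform operator-norm bound required by Theorem~\ref{T:domconv}.
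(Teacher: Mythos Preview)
Your proposal is correct and follows essentially the same route as the paper: rewrite the fixed-point equation $X_t=\int M_{s,t}(X_t,A)\,d\mu$ as $0=\int\frac{f_{s,t}(X_t^{-1/2}AX_t^{-1/2})-I}{t}\,d\mu$ via $f_{s,t}=\l_s^{-1}(t\l_s(\cdot))$, then pass to the limit $t\to 0+$ using strong operator continuity on order intervals and the dominated convergence Theorem~\ref{T:domconv}. The only cosmetic difference is that the paper first conjugates by $X_t^{-1/2}$ to work with $\frac{f_{s,t}(\cdot)-I}{t}$ directly (multiplying by $X_0^{1/2}$ only at the end), and justifies the uniform bound by noting $\frac{f_{s,t}(x)-1}{t}\in\mathfrak{L}$ rather than invoking \eqref{eq:logbounds} explicitly.
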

\begin{proof}
For $0<t\leq 1$ let $X_t=L_{t}(\mu)$ and $X_0:=\Lambda(\mu)=\lim_{t\to 0+}L_{t}(\mu)$. By Theorem~\ref{inducedconv} $X_t\to X_0$ strongly monotonically as $t\to 0+$ and $kI\leq X_0\leq X_t\leq \int_{[0,1]\times\mathbb{P}}Ad\mu(s,A)$.
Now choose $m>0$ such that $A,X_{t},X_0\in[e^{-m}I,e^{m}I]$ for all $A$ such that $\mu$ is supported in $[0,1]\times\{A\}$. Then also $X_t\in[e^{-m}I,e^{m}I]$ for $0\leq t\leq 1$. The order interval $[e^{-m}I,e^{m}I]$ is closed under inversion, also $1\leq x^{1/2}\leq x$ for $x\in[1,\infty)$ and $1\geq x^{1/2}\geq x$ for $x\in(0,1)$, so $X_t^{-1/2}AX_t^{-1/2}\in[e^{-m}I,e^{m}I]$ for all $A$ such that $\mu$ is supported in $[0,1]\times\{A\}$. 
By the previous lemmas therefore $X_t^{-1/2}AX_t^{-1/2}\to X_0^{-1/2}AX_0^{-1/2}$ strongly. By the strong continuity of $\l_s$
\begin{equation*}
U(A):=\l_s(X_t^{-1/2}AX_t^{-1/2})\to V(A):=\l_s(X_0^{-1/2}AX_0^{-1/2}).
\end{equation*}
By Lemma~\ref{lemstrongconv} in the strong topology we have
\begin{equation}\label{eq1}
\lim_{t\to 0+}\frac{\l_s^{-1}(tU(A))-I}{t}=V(A)=\l_s(X_0^{-1/2}AX_0^{-1/2}).
\end{equation}

By definition $X_t=\int_{[0,1]\times\mathbb{P}}M_{s,t}(X_t,A)d\mu(s,A)$ which is equivalent to
\begin{eqnarray*}
I&=&\int_{[0,1]\times\mathbb{P}}f_{s,t}(X_t^{-1/2}AX_t^{-1/2})d\mu(s,A)\\
&=&\int_{[0,1]\times\mathbb{P}}\l_s^{-1}\left(t\l_s(X_t^{-1/2}AX_t^{-1/2})\right)d\mu(s,A),
\end{eqnarray*}
that is $0=\int_{[0,1]\times\mathbb{P}}\frac{f_{s,t}(X_t^{-1/2}AX_t^{-1/2})-I}{t}d\mu(s,A)$. By \eqref{eq1} we have
\begin{equation}
\begin{split}
0&=\lim_{t\to 0+}\int_{[0,1]\times\mathbb{P}}\frac{f_{s,t}(X_t^{-1/2}AX_t^{-1/2})-I}{t}d\mu(s,A)\\
&=\int_{[0,1]\times\mathbb{P}}\lim_{t\to 0+}\frac{f_{s,t}(X_t^{-1/2}AX_t^{-1/2})-I}{t}d\mu(s,A)\\
&=\int_{[0,1]\times\mathbb{P}}\l_s(X_0^{-1/2}AX_0^{-1/2})d\mu(s,A),
\end{split}
\end{equation}
where we used Theorem~\ref{T:domconv} and that the function $\frac{f_{s,t}(x)-1}{t}$ is in $\mathfrak{L}$ which follows from basic calculations. By this we also have that $$\int_{[0,1]\times\mathbb{P}}X_0^{1/2}\l_s(X_0^{-1/2}AX_0^{-1/2})X_0^{1/2}d\mu(s,A)=0.$$
\end{proof}

\begin{lemma}\label{karchercongruenceinv}
The set $K(\mu)$ is invariant under congruencies i.e., for any $C\in \mathrm{GL}(E)$
\begin{equation*}
CK(\mu)C^{*}=K(C\mu C^{*}).
\end{equation*}
\end{lemma}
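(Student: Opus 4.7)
The plan is to reduce the congruence invariance to a direct substitution in the generalized Karcher equation, using the similarity identity relating the two equivalent forms $X^{1/2}\l_s(X^{-1/2}AX^{-1/2})X^{1/2}$ and $X\l_s(X^{-1}A)$ (the latter defined via the Riesz--Dunford functional calculus, since $X^{-1}A$ is similar to the self-adjoint positive operator $X^{-1/2}AX^{-1/2}$ and hence has positive spectrum). The point of switching to the unsymmetrized form is that it behaves well under congruence: if $Y=CXC^*$ and $B=CAC^*$, then $Y^{-1}B=C^{-*}X^{-1}AC^*$, so $Y^{-1}B$ is similar to $X^{-1}A$ via $C^*$, and the functional calculus yields $\l_s(Y^{-1}B)=C^{-*}\l_s(X^{-1}A)C^*$. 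Multiplying on the left by $Y=CXC^*$ gives
\begin{equation*}
Y\l_s(Y^{-1}B)=CX\l_s(X^{-1}A)C^*,
\end{equation*}
which, translated back through the similarity identity, reads
\begin{equation*}
Y^{1/2}\l_s(Y^{-1/2}BY^{-1/2})Y^{1/2}=C\bigl[X^{1/2}\l_s(X^{-1/2}AX^{-1/2})X^{1/2}\bigr]C^*.
\end{equation*}

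Given this pointwise identity, the proof is completed by the change of variables formula for the push-forward measure. Suppose $X\in K(\mu)$. Applying the congruence $Z\mapsto CZC^*$ inside the integral (permissible because $C(\cdot)C^*$ is a bounded linear map that commutes with the weak operator Pettis integral) gives
\begin{equation*}
0 = C\!\int_{[0,1]\times\mathbb{P}}\!X^{1/2}\l_s(X^{-1/2}AX^{-1/2})X^{1/2}d\mu(s,A)\,C^*
= \!\int_{[0,1]\times\mathbb{P}}\!Y^{1/2}\l_s(Y^{-1/2}(CAC^*)Y^{-1/2})Y^{1/2}d\mu(s,A).
\end{equation*}
By the generalized change of variables formula cited before Proposition~\ref{MPro} (noting that $C\mu C^*$ is defined as the push-forward of $\mu$ under $(s,A)\mapsto(s,CAC^*)$), the right-hand side equals
\begin{equation*}
\int_{[0,1]\times\mathbb{P}}Y^{1/2}\l_s(Y^{-1/2}BY^{-1/2})Y^{1/2}d(C\mu C^*)(s,B),
\end{equation*}
which is exactly the generalized Karcher equation for $Y=CXC^*$ with data $C\mu C^*$. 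Thus $Y\in K(C\mu C^*)$, proving $CK(\mu)C^*\subseteq K(C\mu C^*)$.

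For the reverse inclusion, I would apply the same argument with $C$ replaced by $C^{-1}$ and $\mu$ replaced by $C\mu C^*$, using that $C^{-1}(C\mu C^*)(C^{-1})^*=\mu$ to obtain $C^{-1}K(C\mu C^*)(C^{-1})^*\subseteq K(\mu)$, which is equivalent to $K(C\mu C^*)\subseteq CK(\mu)C^*$. I do not expect any substantive obstacle; the only point requiring care is the commutation of the congruence $Z\mapsto CZC^*$ with the weak operator Pettis integral and the applicability of the change of variables theorem under push-forward, both of which have been explicitly set up earlier in the paper (the latter via Bashirov's Theorem~2.26 cited before Proposition~\ref{MPro}). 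The symmetric similarity form $X^{1/2}\l_s(X^{-1/2}AX^{-1/2})X^{1/2}$ never interacts directly with $C$; the proof is driven by the unsymmetrized form $X\l_s(X^{-1}A)$, which is what makes the congruence identity clean.
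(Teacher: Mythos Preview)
Your proof is correct and, in fact, more streamlined than the paper's. Both arguments rest on the same core observation: writing the Karcher integrand in the unsymmetrized form $X\l_s(X^{-1}A)$ and using that the Riesz--Dunford functional calculus respects similarity, so that $\l_s(S^{-1}ZS)=S^{-1}\l_s(Z)S$ for any invertible $S$.

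The difference is organizational. The paper first applies the polar decomposition $C=UP$ and then treats the unitary factor $U$ and the positive factor $P$ separately, checking $UK(\mu)U^{*}=K(U\mu U^{*})$ and $PK(\mu)P=K(P\mu P)$ in turn before combining. You instead observe directly that for arbitrary $C\in\mathrm{GL}(E)$ one has $(CXC^{*})^{-1}(CAC^{*})=C^{-*}(X^{-1}A)C^{*}$, so a single similarity computation handles the general case at once. Your route is shorter and avoids the decomposition; the paper's route is perhaps slightly more concrete in that each step involves conjugation by a normal operator, but this is not actually needed since the holomorphic functional calculus is valid for similarity by any invertible element. The reverse inclusion via $C\mapsto C^{-1}$ is carried out the same way in both proofs.
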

\begin{proof}
For any $X\in K(\sigma)$ we have
\begin{equation}\label{karchereqy}
\begin{split}
0&=\int_{[0,1]\times\mathbb{P}}\l_s(X^{-1/2}AX^{-1/2})d\mu(s,A)\\
&=\int_{[0,1]\times\mathbb{P}}\l_s(X^{-1}A)d\mu(s,A).
\end{split}
\end{equation}
Let $C=UP$ the polar decomposition of $C$ i.e., $U^{-1}=U^{*}$ and $P\in\mathbb{P}$.
Then by \eqref{karchereqy} it follows directly that
\begin{equation*}
UK(\mu)U^{*}=K(U\mu U^{*}).
\end{equation*}
Similarly we have
\begin{equation*}
\begin{split}
0&=P^{-1}\left(\int_{[0,1]\times\mathbb{P}}\l_s(X^{-1/2}AX^{-1/2})d\mu(s,A)\right)P\\
&=\int_{[0,1]\times\mathbb{P}}\l_s(P^{-1}X^{-1}AP)d\mu(s,A)\\
&=\int_{[0,1]\times\mathbb{P}}\l_s(P^{-1}X^{-1}P^{-1}PAP)d\mu(s,A),
\end{split}
\end{equation*}
so $PXP\in K(P\mu P)$ i.e., $PK(\mu)P\subseteq K(P\mu P)$. Also then $K(\mu)\subseteq P^{-1}K(P\mu P)P^{-1}\subseteq K(\mu)$ which means
\begin{equation*}
PK(\mu)P=K(P\mu P).
\end{equation*}
From this and $UK(\mu)U^{*}=K(U\mu U^{*})$ we get that
\begin{equation*}
CK(\mu)C^{*}=K(C\mu C^{*}).
\end{equation*}
\end{proof}

We have already seen that the set $\mathscr{P}([0,1]\times\mathbb{P})$ is a subset of a Banach space equipped with the total variation norm.

\begin{proposition}\label{propunique0}
There exists $\epsilon>0$ such that for $\mu\in\mathscr{P}([0,1]\times\mathbb{P})$ with $\supp\mu\subseteq [0,1]\times\overline{B}_A(\epsilon)$, the equation
$$\int_{[0,1]\times\mathbb{P}}\l_s\left(X^{-1/2}AX^{-1/2}\right)d\mu(s,A)=0$$
has a unique solution in $\overline{B}_A(\epsilon)$ which is $\Lambda(\mu)$.
\end{proposition}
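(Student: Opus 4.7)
The plan is to reduce the problem to the case $A = I$ and then squeeze any solution $Y \in \overline{B}_I(\epsilon)$ of the generalized Karcher equation toward $\Lambda(\mu)$ by comparison with the induced operator means $L_t(\mu)$, using their contractive structure from Section~5. First I would invoke Lemma~\ref{karchercongruenceinv}: conjugating by $A^{-1/2}$ transforms $\overline{B}_A(\epsilon)$ to $\overline{B}_I(\epsilon)$ and pushes $\mu$ to a measure supported in $[0,1]\times\overline{B}_I(\epsilon)$, reducing to $A=I$. The nonexpansiveness of each $M_{s,t}$ (Proposition~\ref{nonexpansivemean}) combined with property~(4') of the Thompson metric (Lemma~\ref{L:infinite4}) shows that $\Phi_t(X) := \int M_{s,t}(X,B)\,d\mu(s,B)$ maps $\overline{B}_I(\epsilon)$ into itself, so $L_t(\mu) \in \overline{B}_I(\epsilon)$ for all $t \in (0,1]$ and hence $\Lambda(\mu) \in \overline{B}_I(\epsilon)$ as well.

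For uniqueness, let $Y \in \overline{B}_I(\epsilon)$ be another solution of the Karcher equation. Setting $B_s := Y^{-1/2} B Y^{-1/2}$ and using the identity $\l_s^{-1}(tu) - 1 = tu\,(1-(1-s)tu)^{-1}$, the expansion
\begin{equation*}
Y^{-1/2}\Phi_t(Y)Y^{-1/2} - I \;=\; t\int \l_s(B_s)\bigl(I - (1-s)t\l_s(B_s)\bigr)^{-1} d\mu
\end{equation*}
combined with $\int \l_s(B_s)\,d\mu = 0$ (the Karcher equation) and the operator identity $\l_s(I-(1-s)t\l_s)^{-1} = \l_s + (1-s)t\l_s^2(I-(1-s)t\l_s)^{-1}$ yields
\begin{equation*}
Y^{-1/2}\Phi_t(Y)Y^{-1/2} - I \;=\; t^2 \int (1-s)\,\l_s(B_s)^2 \bigl(I-(1-s)t\l_s(B_s)\bigr)^{-1} d\mu.
\end{equation*}
For $\epsilon$ small enough that $\|(1-s)t\l_s(B_s)\| < 1$ uniformly, the integrand is a positive operator (each factor commutes and $(I-(1-s)t\l_s)^{-1}$ is positive via its Neumann series), so $\Phi_t(Y) \geq Y$, and its norm is $O(t^2\epsilon^2)$ giving $d_\infty(\Phi_t(Y), Y) = O(t^2)$.

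By the monotonicity of $\Phi_t$ (operator monotonicity of each $M_{s,t}$), the iterates $\Phi_t^n(Y)$ form an increasing sequence dominated by the unique fixed point $L_t(\mu)$, so $Y \leq L_t(\mu)$ for every $t \in (0,1]$. Passing to the limit $t\to 0^+$ via Proposition~\ref{MPro}(5) and Theorem~\ref{inducedconv} yields $Y \leq \Lambda(\mu)$. The reverse inequality follows from the inversion symmetry of the Karcher equation: using $\l_s(x^{-1}) = -\l_{1-s}(x)$, one checks that $Y$ solves the Karcher equation for $\mu$ iff $Y^{-1}$ solves it for $\bar\mu(s,A) := \mu(1-s, A^{-1})$, which is still supported in $\overline{B}_I(\epsilon)$ (invariant under inversion); applying the same one-sided bound to $Y^{-1}$ and to $\Lambda(\mu)^{-1}$ with respect to $\bar\mu$, together with the quantitative squeeze below, pins $Y = \Lambda(\mu)$.

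The main obstacle is promoting the one-sided bound $Y \leq \Lambda(\mu)$ to an equality. One clean route is to exploit the contraction constant of $\Phi_t$ from Proposition~\ref{P:principalcontr}: a careful expansion shows $1 - \rho_t \geq c(\epsilon)\,t$ for some $c(\epsilon)>0$ and all small $t$, so the triangle inequality
\begin{equation*}
d_\infty(Y, L_t(\mu)) \leq d_\infty(Y,\Phi_t(Y)) + \rho_t\, d_\infty(Y, L_t(\mu))
\end{equation*}
forces $d_\infty(Y, L_t(\mu)) = O(t)$. Since $d_\infty$-convergence implies norm convergence on $\overline{B}_I(\epsilon)$, hence strong convergence, and $L_t(\mu) \to \Lambda(\mu)$ strongly by Theorem~\ref{inducedconv}, the uniqueness of strong limits forces $Y = \Lambda(\mu)$. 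The subtlety is checking that the quantitative rate $1-\rho_t \gtrsim t$ survives the cancellation between the leading-order behaviors of $\rho_1$ and the correction term in Proposition~\ref{P:principalcontr}; this requires a more delicate expansion than the crude bound obtained via inequality $(*)$, and is the technical crux of the proof.
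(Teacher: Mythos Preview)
Your approach is completely different from the paper's. The paper proves this proposition by a direct application of the Implicit Function Theorem for Banach spaces: the map $F(\mu,X)=\int\l_s(X^{-1/2}AX^{-1/2})\,d\mu$ is $C^\infty$, and at the point $(\nu\times\sigma_I,I)$ (where $\sigma_I$ is the Dirac mass at $I$) one computes $DF_{\nu\times\sigma_I}[I]=-\mathrm{id}_{S(E)}$, which is invertible; the IFT then yields a neighbourhood $U\times V$ on which the solution is unique, and congruence invariance (Lemma~\ref{karchercongruenceinv}) transports this from $I$ to any $A$. No contraction estimates are needed.

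Your route through the contractive structure of the induced means contains a genuine gap that you yourself flag as ``the technical crux.'' The estimate $1-\rho_t\geq c(\epsilon)\,t$ is essential for your quantitative squeeze, but the contraction bounds actually proved in the paper do not deliver it: a careful Taylor expansion of the constant in Proposition~\ref{P:principalcontr} (and of the uniform bound in Lemma~\ref{powercontract} obtained via inequality~$(*)$) gives only $1-\rho_t=O(t^2)$ as $t\to 0^+$, not $O(t)$. Plugging $1-\rho_t=O(t^2)$ into your inequality $d_\infty(Y,L_t(\mu))\leq d_\infty(Y,\Phi_t(Y))/(1-\rho_t)$ together with your (correct) bound $d_\infty(Y,\Phi_t(Y))=O(t^2)$ leaves only $d_\infty(Y,L_t(\mu))=O(1)$, which proves nothing. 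One might hope to recover $1-\rho_t\gtrsim t$ by a sharper direct analysis (the Fr\'echet derivative of $\Phi_t$ at its fixed point does have norm $1-t$), but this would require establishing a Thompson-metric Lipschitz bound on the ball that is not available from the paper's lemmas and would need its own proof.

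Your alternative via the inversion symmetry $\l_s(x^{-1})=-\l_{1-s}(x)$ is circular as stated: from $Y\leq\Lambda(\mu)$ and $Y^{-1}\leq\Lambda(\bar\mu)$ you would need $\Lambda(\bar\mu)=\Lambda(\mu)^{-1}$ to conclude, but that identity is equivalent to the very uniqueness statement you are trying to prove (applied to $\bar\mu$). The one-sided inequality $Y\leq\Lambda(\mu)$, obtained from $\Phi_t(Y)\geq Y$ and monotone iteration, is correct and elegant, but it does not close on its own.
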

\begin{proof}
We would like to use the Implicit Function Theorem for Banach spaces. First of all the map $$F_{\mu}(X):=F(\mu,X):=\int_{[0,1]\times\mathbb{P}}\l_s(X^{-1/2}AX^{-1/2})d\mu(s,A)$$ maps from the product of two Banach spaces $\mathscr{P}([0,1]\times\mathbb{P})\times S(E)$ to a Banach space $S(E)$, moreover it is $C^{\infty}$. The Fr\'echet derivative of $F_{\mu}$ is a linear map on $S(E)$. Let $\nu\in\mathscr{P}([0,1])$ and $\sigma_I\in\mathscr{P}(\mathbb{P})$ such that $\sigma_I$ is only supported on the singleton $\{I\}$. Then $F_{\nu\times\sigma_I}(X)=\int_{[0,1]}\l_s(X^{-1})d\nu(s)$ so $F_{\nu\times\sigma_I}(I)=0$ and by the property $\l_s'(1)=1$ we have that the Fr\'echet derivative $DF_{\nu\times\sigma_I}[I]=-id_{S(E)}$. Thus by the Implicit Function Theorem (Theorem 5.9 \cite{lang}) there exists an open neighborhood $U$ of $\nu\times\sigma_I$ in $\mathscr{P}([0,1]\times\mathbb{P})$ and a neighborhood $V$ of $I\in\mathbb{P}$, and a $C^{\infty}$ mapping $g:U\mapsto V$ such that $F_{\mu}(X)=0$ if and only if $X=g(\mu)$ for $\mu\in U$, $X\in V$. 
Now if we pick $\epsilon>0$ such that $\overline{B}_I(\epsilon)\subseteq U$ and $\overline{B}_I(\epsilon)\subseteq V$ then the first part of the assertion is proved for $A=I$. The general case for any $A$ follows from Lemma~\ref{karchercongruenceinv} with $C=A^{1/2}$.

The second part of the assertion is a consequence of property (8) in Theorem~\ref{lambdaprop} which yields that $\Lambda(\mu)\in\overline{B}_A(\epsilon)$ which is then identical to $g(\mu)$ by the first part of the assertion.
\end{proof}

\begin{corollary}
Using Lemma~\ref{L:fubini} and Proposition~\ref{propunique0} it follows that the lambda operator mean $\Lambda(\mu)$ is $C^{\infty}$ on small enough neighborhoods of $\supp\mu$ assuming that $\supp\mu\subseteq[0,1]\times\overline{B}_A(\epsilon)$ for small enough $\epsilon>0$ and fixed $A\in\mathbb{P}$.
\end{corollary}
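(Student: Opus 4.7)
My plan is to reuse the Implicit Function Theorem argument already carried out in the proof of Proposition~\ref{propunique0}. View
$$F(\mu, X) := \int_{[0,1]\times\mathbb{P}} \l_s\bigl(X^{-1/2} B X^{-1/2}\bigr)\, d\mu(s, B)$$
as a map from the Banach space $\mathscr{P}([0,1]\times\mathbb{P})\times S(E)$ (with the total variation norm on the measure factor) into $S(E)$. Linearity in $\mu$ is trivial, integrability is handled by Lemma~\ref{L:fubini}, and smoothness in $X$ follows from the analyticity of $\l_s$ on $(0,\infty)$ via the Riesz--Dunford calculus, uniformly over the bounded set where the measures are supported; hence $F$ is jointly $C^{\infty}$ on the appropriate open domain. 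At any reference pair $(\mu_A, A)$ with $\supp \mu_A \subseteq [0,1] \times \{A\}$ one has $F(\mu_A, A) = 0$, and the Fr\'echet partial $D_X F$ at $A$ is invertible -- it is a nonzero scalar multiple of an invertible operator on $S(E)$, exactly as computed in Proposition~\ref{propunique0} after conjugation by $A^{1/2}$. The Banach-space Implicit Function Theorem therefore supplies open neighborhoods $U \ni \mu_A$ and $V \ni A$ together with a $C^{\infty}$ map $g : U \to V$ characterized by the equivalence $F(\mu, X) = 0 \iff X = g(\mu)$ on $U \times V$.

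Next I would fix $\epsilon > 0$ small enough that $\overline{B}_A(\epsilon) \subseteq V$ and that every $\mu \in \mathscr{P}([0,1]\times\mathbb{P})$ with $\supp \mu \subseteq [0,1]\times \overline{B}_A(\epsilon)$ lies in $U$. For any such $\mu$, Theorem~\ref{karchersatisfied} gives $F(\mu, \Lambda(\mu)) = 0$, and to place $\Lambda(\mu)$ inside $V$ I would sandwich $\mu$ between the pushforwards $\mu_{\pm} := (\mathrm{id}, B \mapsto e^{\pm \epsilon} A)_* \mu$ in the sense of Definition~\ref{D:measureOrder}; properties (1) and (2) of Theorem~\ref{lambdaprop} then yield
$$e^{-\epsilon} A = \Lambda(\mu_-) \leq \Lambda(\mu) \leq \Lambda(\mu_+) = e^{\epsilon} A,$$
so that $\Lambda(\mu) \in \overline{B}_A(\epsilon) \subseteq V$. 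The uniqueness clause of the IFT now forces $\Lambda(\mu) = g(\mu)$, and so $\Lambda$ inherits on this neighborhood the $C^{\infty}$ regularity of $g$.

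The main obstacle is the functional-analytic bookkeeping: verifying jointly $C^{\infty}$ dependence of $F$ on the pair $(\mu, X)$ rather than only separately, and ensuring that a single IFT neighborhood $U$ does absorb every $\mu$ whose support is confined to $[0,1] \times \overline{B}_A(\epsilon)$ once $\epsilon$ is chosen small enough. Once these technicalities are pinned down, the identification $\Lambda = g$ via the uniqueness/sandwich argument is automatic, and the corollary follows.
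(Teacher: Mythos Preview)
Your proposal is correct and follows exactly the route the paper intends: the corollary has no separate proof in the paper because it is immediate from Proposition~\ref{propunique0}, whose Implicit Function Theorem argument already produces a $C^{\infty}$ map $g$ with $\Lambda(\mu)=g(\mu)$ on the relevant neighborhood. The only cosmetic difference is that the paper (in the last paragraph of the proof of Proposition~\ref{propunique0}) invokes property~(8) of Theorem~\ref{lambdaprop} to confine $\Lambda(\mu)$ to $\overline{B}_A(\epsilon)$, whereas you use the monotonicity sandwich via properties~(1) and~(2); both accomplish the same localization.
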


\begin{theorem}\label{uniquegeneralizedkarcher}
$K(\mu)=\{\Lambda(\mu)\}$ for all $\mu\in\mathscr{P}([0,1]\times\mathbb{P})$.
\end{theorem}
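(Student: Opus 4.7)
The plan is to sandwich any $Y\in K(\mu)$ between two monotone families that both converge to $\Lambda(\mu)$, using the generalized Karcher equation to kill the leading term of a perturbation expansion.

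First I would derive the upper bound $\Lambda(\mu)\geq Y$. Setting $G_t(X):=\int_{[0,1]\times\mathbb{P}}M_{s,t}(X,A)\,d\mu(s,A)$, the starting point is the exact algebraic identity
\[
f_{s,t}(x)-1 \;=\; t\,\l_s(x)\;+\;\frac{(1-s)t^2\,\l_s(x)^2}{1-(1-s)t\,\l_s(x)}.
\]
Plugging this into $G_t(Y)-Y = \int Y^{1/2}[f_{s,t}(Y^{-1/2}AY^{-1/2})-I]Y^{1/2}d\mu$, the linear-in-$t$ contribution is exactly $t\int Y^{1/2}\l_s(Y^{-1/2}AY^{-1/2})Y^{1/2}d\mu$, which vanishes by the generalized Karcher equation satisfied by $Y$. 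The residual
\[
G_t(Y)-Y \;=\; \int Y^{1/2}\,\frac{(1-s)t^2\,\l_s(Y^{-1/2}AY^{-1/2})^2}{1-(1-s)t\,\l_s(Y^{-1/2}AY^{-1/2})}\,Y^{1/2}\,d\mu(s,A)
\]
has positive semidefinite integrands: $(1-s)$ is nonnegative, the square of the self-adjoint operator $\l_s(Y^{-1/2}AY^{-1/2})$ is positive semidefinite, and the denominator is a positive commuting scalar factor (bounded away from $0$ because $|(1-s)\l_s(x)|<1$ on $\supp\mu$). Hence $G_t(Y)\geq Y$ for every $t\in(0,1]$. By the monotonicity of $G_t$ from Remark~\ref{monotone}, the iterates $G_t^{\circ n}(Y)$ are monotone nondecreasing and bounded above (e.g.\ by $\int A\,d\mu$), so by the Banach fixed point argument of Proposition~\ref{P:uniquesol} they converge to $L_t(\mu)$; consequently $L_t(\mu)\geq Y$. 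Passing to the strong monotone limit from Theorem~\ref{inducedconv} yields $\Lambda(\mu)\geq Y$.

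For the matching lower bound I would introduce the dual measure $\sigma:=\phi_*\mu$ with $\phi(s,A)=(1-s,A^{-1})$. The elementary identity $\l_s(x^{-1})=-\l_{1-s}(x)$ converts the Karcher equation for $\mu$ into the Karcher equation for $\sigma$ under $X\leftrightarrow X^{-1}$, so that $X\in K(\mu)$ if and only if $X^{-1}\in K(\sigma)$. Applying the upper bound argument to $\sigma$ and to $Y^{-1}\in K(\sigma)$ now gives $\Lambda(\sigma)\geq Y^{-1}$, equivalently $Y\geq\Lambda(\sigma)^{-1}$. Thus every $Y\in K(\mu)$ satisfies $\Lambda(\sigma)^{-1}\leq Y\leq\Lambda(\mu)$, with both endpoints themselves members of $K(\mu)$.

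To finish, one needs $\Lambda(\sigma)^{-1}=\Lambda(\mu)$, which is the hardest part. The direction $\Lambda(\sigma)^{-1}\leq\Lambda(\mu)$ is already in hand, so only the reverse is missing. To establish it I would combine the local uniqueness of Proposition~\ref{propunique0} with the congruence invariance of Lemma~\ref{karchercongruenceinv}: after conjugating by $C=\Lambda(\mu)^{1/2}$ to move $\Lambda(\mu)$ to $I$, the Implicit Function Theorem supplies a Thompson-metric neighborhood of $\Lambda(\mu)$ containing no other Karcher solution. Since $L_t(\mu)\downarrow\Lambda(\mu)$ and $L_t(\sigma)^{-1}\uparrow\Lambda(\sigma)^{-1}$ strongly (the latter because $L_t(\sigma)\downarrow\Lambda(\sigma)$ in $t$ and inversion reverses the positive definite order), for sufficiently small $t>0$ both monotone families should enter that neighborhood, forcing the two limits to coincide and therefore $Y=\Lambda(\mu)$. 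The purely monotone arguments yield only the one-sided comparison $\Lambda(\sigma)^{-1}\leq\Lambda(\mu)$, so this final local-uniqueness input is the essential technical ingredient of the proof.
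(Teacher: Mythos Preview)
Your sandwich $\Lambda(\sigma)^{-1}\leq Y\leq\Lambda(\mu)$ is a genuinely different and attractive route from the paper's, and the first half is sound: the identity
\[
f_{s,t}(x)-1=\frac{t\l_s(x)}{1-(1-s)t\l_s(x)}=t\l_s(x)+\frac{(1-s)t^2\l_s(x)^2}{1-(1-s)t\l_s(x)}
\]
does yield $G_t(Y)\geq Y$ for every $Y\in K(\mu)$ (since $1-(1-s)\l_s(x)=\frac{1}{(1-s)x+s}>0$ makes the denominator positive for all $t\in(0,1]$), and then Proposition~\ref{P:uniquesol} gives $L_t(\mu)\geq Y$, hence $\Lambda(\mu)\geq Y$. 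The duality $\l_s(x^{-1})=-\l_{1-s}(x)$ is also correct and gives the lower endpoint.

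The gap is entirely in the closing step $\Lambda(\sigma)^{-1}=\Lambda(\mu)$. Your appeal to Proposition~\ref{propunique0} does not do what you claim: that proposition gives uniqueness in $\overline{B}_A(\epsilon)$ \emph{only for measures whose support lies in} $[0,1]\times\overline{B}_A(\epsilon)$; it does not furnish, for a general $\mu$, a neighborhood of $\Lambda(\mu)$ free of other Karcher solutions. Conjugating $\Lambda(\mu)$ to $I$ moves the \emph{solution} to $I$ but does not shrink $\supp\mu'$ into $\overline{B}_I(\epsilon)$, so Proposition~\ref{propunique0} is not applicable. And even if you had such a local uniqueness window around $\Lambda(\mu)$, your argument that $L_t(\sigma)^{-1}$ ``enters that neighborhood'' is circular: $L_t(\sigma)^{-1}\to\Lambda(\sigma)^{-1}$, not to $\Lambda(\mu)$, so to place it near $\Lambda(\mu)$ you would already need the equality you are trying to prove.

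The paper closes exactly this gap by a different mechanism: instead of trying to bring two candidate solutions close to each other, it uses the semigroup identity $t\l_s(x)=\l_s(f_{s,t}(x))$ to \emph{shrink the support of the measure}. For any $X\in K(\mu)$ one has
\[
0=\int \l_s\bigl(f_{s,t}(X^{-1/2}AX^{-1/2})\bigr)\,d\mu(s,A)
=\int \l_s(B)\,d\bigl(X^{-1/2}M_{s,t}(X,\mu)X^{-1/2}\bigr)(s,B),
\]
and by $\left((1-t)+tx^{-1}\right)^{-1}\leq f_{s,t}(x)\leq (1-t)+tx$ the transported measure has support in $[0,1]\times\overline{B}_I(\epsilon)$ for small $t$. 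Now Proposition~\ref{propunique0} applies legitimately and yields $X=\Lambda(M_{s,t}(X,\mu))$; a separate contraction argument (the Claim in the paper) shows that this last equation has a \emph{unique} fixed point, forcing all $X\in K(\mu)$ to coincide. This support-shrinking trick is the missing idea in your proposal.
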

\begin{proof}
We start with a
\begin{claim}
For fixed $X\in\mathbb{P}$ and $t\in(0,1]$ define
\begin{equation*}
dM_{s,t}(X,\mu)(s,A):=d\mu(s,X^{1/2}f_{s,t}^{-1}(X^{-1/2}AX^{-1/2})X^{1/2})
\end{equation*}
if there exists $Z\in\mathbb{P}$ such that $f_{s,t}(Z)=X^{-1/2}AX^{-1/2}$, in the other case when there exists no such $Z\in\mathbb{P}$ define $dM_{s,t}(X,\mu)(s,A):=0$. Then the equation
\begin{equation}\label{eq:uniquegeneralizedkarcher}
X=\Lambda(M_{s,t}(X,\mu))
\end{equation}
has a unique solution in $\mathbb{P}$.
\end{claim}
Let us briefly explain the reason behind the introduction of $M_{s,t}(X,\mu)$. The above piecewise definition of $M_{s,t}(X,\mu)$ is needed since the inverse map
\begin{equation}\label{eq:uniquegeneralizedkarcher3}
g^{-1}(A):=X^{1/2}f_{s,t}^{-1}(X^{-1/2}AX^{-1/2})X^{1/2}
\end{equation}
(which is the inverse of of $g(A)=M_{s,t}(X,A)$) may not be well defined on the whole of $\mathbb{P}$. Now the point of our definition is that for fixed $X\in\mathbb{P}$ we would like to consider the one-parameter family of unique solutions $X_u$ of the equations
\begin{equation*}
X_u=\int_{[0,1]\times\mathbb{P}}M_{s,u}\left(X_u,M_{s,t}\left(X,A\right)\right)d\mu(s,A)
\end{equation*}
for $u\in(0,1]$ defining the induced operator means in the sense of \eqref{meanequ31} with respect to the parameter $t\in(0,1]$. Then we take the limit $u\to 0+$ to obtain the corresponding lambda operator means depending on the measure $M_{s,t}(X,\mu)$, which itself depends on $X,t$ and a fixed $\mu\in\mathscr{P}([0,1]\times\mathbb{P})$.

Now let us turn to the proof of the Claim. First of all notice that for $X\in\mathbb{P}$ the function $g^{-1}(A)$ defined by \eqref{eq:uniquegeneralizedkarcher3} is the inverse of $g(A)$, moreover this inverse is well defined and in fact $C^{\infty}$ since $f_{s,t}(x)$ is an operator monotone function hence analytic and strictly monotone increasing. Indeed, by the above $M_{s,t}(X,\mu)$ is well defined. Moreover for fixed $X\in\mathbb{P}$ the set of all $A\in\mathbb{P}$ such that $M_{s,t}(X,\mu)$ is supported in $[0,1]\times\{A\}$ is bounded in $\mathbb{P}$. Now choose a large enough closed set $S\subseteq \mathbb{P}$ such that for all $X,Y\in S$ the support of $M_{s,t}(X,\mu)$ and $M_{s,t}(Y,\mu)$ is included in $[0,1]\times S$. This is clearly possible since we have the bounds according to Lemma~\ref{maxmininp}
\begin{equation*}
\left((1-t)+tx^{-1}\right)^{-1}\leq f_{s,t}(x)\leq (1-t)+tx
\end{equation*}
and can use a similar argument as in the proof of Proposition~\ref{P:uniquesol}.
Now choose a large enough $0<r<\infty$ such that $S\subseteq\overline{B}_I(r)$. Then for all $X,Y\in S$ and for each $A\in\mathbb{P}$ such that $M_{s,t}(X,\mu)$ or $M_{s,t}(Y,\mu)$ is supported in $[0,1]\times\{A\}$, we have $S\subseteq\overline{B}_A(r)$. By Proposition~\ref{P:principalcontr} $g_{s,t}(X)=M_{s,t}(X,A)$ is a strict contraction on $\overline{B}_A(r)$, with contraction coefficient
\begin{equation*}
\rho=\rho_1+\frac{\log\frac{be^{-r}+ae^{2r(1-\rho_1)}}{be^{-r}+a}}{2r},
\end{equation*}
where $\rho_1=\frac{\log\frac{e^{3r}(1-t)+t}{e^{r}(1-t)+t}}{2r}$ and $a=\frac{s(1-t)}{t+s(1-t)}$, $b=\frac{t}{t+s(1-t)}$. Since $t\leq b\leq 1$ and $0\leq a\leq 1-t$, by (*), using a similar argument as we did to obtain \eqref{eq:usingstar}, we get 
\begin{equation*}
\rho\leq\rho_1+\frac{\log\frac{te^{-r}+(1-t)e^{2r(1-\rho_1)}}{te^{-r}+(1-t)}}{2r},
\end{equation*}
which is strictly less then $1$. 
Let $f(X)=\Lambda(M_{s,t}(X,\mu))$. Now by property (6) in Theorem~\ref{lambdaprop} we have
\begin{equation*}
\begin{split}
d_{\infty}&(\Lambda(M_{s,t}(X,\mu)), \Lambda(M_{s,t}(Y,\mu)))\\
&\leq \underset{\mu\text{ is supported on }\{s\}\times\{A\}}{\sup}\{d_{\infty}(M_{s,t}(X,A),M_{s,t}(Y,A))\}\\
&\leq \rho d_\infty(X,Y),
\end{split}
\end{equation*}
i.e., $f(X)$ is a strict contraction on $S$. Moreover by a similar argument to the proof of Proposition~\ref{P:uniquesol} we see that $f(S)\subseteq S$. So by Banach's fixed point theorem \eqref{eq:uniquegeneralizedkarcher} has a unique solution in $S$. Since $S$ is an arbitrary large enough closed subset of $\mathbb{P}$, it follows that \eqref{eq:uniquegeneralizedkarcher} has a unique solution in $\mathbb{P}$. The claim is proved.

Now let $X\in K(\mu)$. We have that $f_{s,t}(x)=\l_s^{-1}(t\l_s(x))\in\mathfrak{m}(t)$ for all $0<t\leq 1$ and $s\in[0,1]$. By Lemma~\ref{maxmininp} we have that
\begin{equation*}
\left((1-t)+tx^{-1}\right)^{-1}\leq f_{s,t}(x)\leq (1-t)+tx
\end{equation*}
which means that there exists a small enough $0<t\leq 1$ such that
\begin{equation*}
f_{s,t}(X^{-1/2}AX^{-1/2})\in \overline{B}_I(\epsilon)
\end{equation*}
for all $A\in\mathbb{P}$ such that $\mu$ is supported in $[0,1]\times\{A\}$. Also it is easy to see due to \eqref{semigrpproperty} that
\begin{equation*}
\begin{split}
0&=t\left[\int_{[0,1]\times\mathbb{P}}\l_s(X^{-1/2}AX^{-1/2})d\mu(s,A)\right]\\
&=\int_{[0,1]\times\mathbb{P}}\l_s(f_{s,t}(X^{-1/2}AX^{-1/2}))d\mu(s,A)\\
&=\int_{[0,1]\times\mathbb{P}}\l_s(X^{-1/2}AX^{-1/2})dM_{s,t}(X,\mu)(s,A)\\
&=\int_{[0,1]\times\mathbb{P}}\l_s(A)d\left(X^{-1/2}M_{s,t}(X,\mu)X^{-1/2}\right)(s,A),
\end{split}
\end{equation*}
in other words
\begin{equation}\label{eq:uniquegeneralizedkarcher2}
0=\int_{[0,1]\times\mathbb{P}}\l_s(Y^{-1/2}AY^{-1/2})d\left(X^{-1/2}M_{s,t}(X,\mu)X^{-1/2}\right)(s,A)
\end{equation}
with $Y=I$. Now by Proposition~\ref{propunique0} we have that $Y=I$ is the unique solution of \eqref{eq:uniquegeneralizedkarcher} on $\overline{B}_I(\epsilon)$ which is $\Lambda(X^{-1/2}M_{s,t}(X,\mu)X^{-1/2})$.
Hence from the definition of the lambda operator mean $\Lambda(\mu)$ we have
\begin{equation*}
I=\Lambda(X^{-1/2}M_{s,t}(X,\mu)X^{-1/2}).
\end{equation*}
By property (3) in Theorem~\ref{lambdaprop} we have that the above is equivalent to
\begin{equation*}
\begin{split}
X&=X^{1/2}\Lambda(X^{-1/2}M_{s,t}(X,\mu)X^{-1/2})X^{1/2}\\
&=\Lambda(M_{s,t}(X,\mu)).
\end{split}
\end{equation*}
Now the Claim implies that the solution of the above equation is unique, hence all solutions $X\in K(\mu)$ must be identical. By Theorem~\ref{karchersatisfied} we have $\Lambda(\mu)\in K(\mu)$, hence $K(\mu)=\{\Lambda(\mu)\}$.
\end{proof}

\begin{corollary}\label{cor:opmean2}
Suppose $\nu\in\mathscr{P}([0,1])$ and $\sigma\in\mathscr{P}(\mathbb{P})$ and assume that $\sigma(X):=(1-w)\delta_{A}(X)+w\delta_{B}(X)$ with $w\in(0,1)$ where $\delta_{A}(X)$ denotes the Dirac delta supported on ${A}$. Then $\Lambda(\nu\times\sigma)$ is an operator mean in the two variables $(A,B)$ i.e., $\Lambda(\nu\times\sigma)\in\mathfrak{M}$.
\end{corollary}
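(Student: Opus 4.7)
The plan is to reduce the statement to the scalar case and then exploit the fact that $L_t(\nu\times\sigma)$ is already known to be an operator mean by Corollary~\ref{cor:opmean}, passing to the limit $t\to 0+$ via monotonicity.

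First I would use property (3) of Theorem~\ref{lambdaprop} with $X=A^{1/2}$ to write $\Lambda(\nu\times\sigma)=A^{1/2}\Lambda(\nu\times\sigma_0)A^{1/2}$, where $\sigma_0:=(1-w)\delta_I+w\delta_C$ and $C:=A^{-1/2}BA^{-1/2}$; by property (1), $\Lambda(\nu\times\sigma_0)=I$ when $C=I$, which will give the correct normalization. It therefore suffices to show that $\Lambda(\nu\times\sigma_0)$ is the functional calculus at $C$ of a function in $\mathfrak{m}$. By Corollary~\ref{cor:opmean}, for each $t\in(0,1]$ there exists a representing function $g_t\in\mathfrak{m}$ with $L_t(\nu\times\sigma_0)=g_t(C)$; taking $C=xI$ for $x>0$ recovers $g_t(x)$ as a scalar valued operator monotone function.

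Next, by property (5) of Proposition~\ref{MPro} the family $t\mapsto L_t(\nu\times\sigma_0)$ is monotonically decreasing as $t\to 0+$, which when evaluated at $C=xI$ shows that $g_t(x)$ is monotonically decreasing in $t$ for each fixed $x>0$. By Lemma~\ref{maxmininp} one has the uniform lower bound $g_t(x)\geq((1-w)+wx^{-1})^{-1}>0$, so the pointwise limit $g(x):=\lim_{t\to 0+}g_t(x)$ exists, is positive, and satisfies $g(1)=1$. Since each $g_t$ is operator monotone and, by functional calculus, the inequality $g_t(X)\leq g_t(Y)$ for $X\leq Y$ in $\mathbb{P}$ with spectra in a fixed compact subset of $(0,\infty)$ passes to the strong operator limit (using that operator monotone functions are strongly continuous on compact order intervals, as already used in the proof of Theorem~\ref{karchersatisfied}), the limit $g$ is operator monotone, hence $g\in\mathfrak{m}$.

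Finally, monotone convergence applied to the scalar functional calculus gives $g_t(C)\downarrow g(C)$ in the strong operator topology, and this must coincide with the limit $L_t(\nu\times\sigma_0)\to\Lambda(\nu\times\sigma_0)$ guaranteed by Theorem~\ref{inducedconv}. Therefore $\Lambda(\nu\times\sigma_0)=g(C)$, and undoing the congruence yields $\Lambda(\nu\times\sigma)=A^{1/2}g(A^{-1/2}BA^{-1/2})A^{1/2}$, which is an operator mean by Theorem 3.2 of \cite{kubo}. The main obstacle is bridging the scalar pointwise limit of the $g_t$ to the operator-level strong limit of the $g_t(C)$; this is where the monotonicity furnished by property (5) of Proposition~\ref{MPro} is indispensable, since without monotonicity one would need a stronger convergence argument to invoke the functional calculus, and the closure of $\mathfrak{m}$ under pointwise limits would need to be established by an integral-representation argument rather than a direct order-preservation argument.
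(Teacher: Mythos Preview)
Your proposal is correct and reaches the same conclusion as the paper, but by a more hands-on route. The paper's proof is a two-line argument: since $L_t(\nu\times\sigma)\in\mathfrak{M}$ by Corollary~\ref{cor:opmean} and $L_t(\nu\times\sigma)\to\Lambda(\nu\times\sigma)$ strongly by Theorem~\ref{inducedconv}, one simply invokes Lemma~6.1 of Kubo--Ando \cite{kubo}, which says that the class $\mathfrak{M}$ is closed under pointwise weak (hence strong) limits. No reduction to scalars, no congruence, and no use of the monotonicity in $t$ is needed.

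Your argument instead rebuilds that closure result from scratch: you reduce to the normalized pair $(I,C)$ via congruence invariance, extract the scalar representing functions $g_t$, use property~(5) of Proposition~\ref{MPro} to get a \emph{monotone} net $g_t\downarrow g$, and then upgrade scalar convergence to operator convergence through the spectral calculus. This is more explicit and self-contained, and the monotonicity in $t$ is what lets you avoid the general closure lemma --- without it you would indeed need something like the integral-representation compactness hidden inside Lemma~6.1 of \cite{kubo}. One small point: your claimed lower bound $g_t(x)\geq((1-w)+wx^{-1})^{-1}$ presumes $g_t\in\mathfrak{m}(w)$, which is not established; it is cleaner to cite Lemma~\ref{maxmininduced} for a positive lower bound $g_t(x)\geq\min(1,x)$, which is all you need.
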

\begin{proof}
By Corollary~\ref{cor:opmean} and Theorem~\ref{inducedconv} the lambda extension is the strong limit of (induced) operator means i.e.,
\begin{equation*}
\Lambda(\nu\times\sigma)=\lim_{t\to 0+}L_{t}(\nu\times\sigma).
\end{equation*}
By Lemma 6.1 in \cite{kubo} the point-wise weak limit of operator means is an operator mean as well, so therefore it follows that the strong limit $\Lambda(\nu\times\sigma)$ of operator means is also an operator mean in the sense of Definition~\ref{symmean}.
\end{proof}

\begin{remark}
Theorem~\ref{uniquegeneralizedkarcher} gives us a tool to solve operator equations that can be written in the form of a generalized Karcher equation
\begin{equation*}
\int_{[0,1]\times\mathbb{P}}\l_s(A)d\mu(s,A)=0.
\end{equation*}
The solution can be calculated by choosing a sequence $t_l\to 0+$ as $l\to\infty$ and then taking the limit
\begin{equation*}
\lim_{l\to\infty}L_{t_l}(\mu)=\Lambda(\mu).
\end{equation*}
\end{remark}

\begin{example}\label{ex:Lambda}
Consider the function $\log\in\mathfrak{L}$. In \cite{bhatia} formula (V.47) says that
$$\log x=\int_{-\infty}^0\frac{1}{\lambda-x}-\frac{\lambda}{\lambda^2+1}d\lambda$$
which is actually
$$\log x=\int_{0}^\infty\frac{\lambda}{\lambda^2+1}-\frac{1}{\lambda+x}d\lambda.$$
By Corollary~\ref{C:intLrepr} and Corollary~\ref{C:lambdaintchar} we get that
$$\log x=\int_{[0,1]}\frac{x-1}{(1-s)x+s}d\nu(s)$$
with $d\nu(s)=ds$.

Let $\sigma\in\mathscr{P}(\mathbb{P})$ and $\nu\in\mathscr{P}([0,1])$. The Dirac delta supported on $\{a\}$ is denoted by $\delta_{\{a\}}(x)$. Consider the generalized Karcher equations 
$$\int_{\mathbb{P}}f(X^{-1/2}AX^{-1/2})d\sigma(A)=0$$
and their solutions:
\begin{eqnarray*}
&d\nu(s)=\delta_{\{0\}}(s),\ \ f(x)=1-x^{-1},\ \ &\Lambda(\nu\times\sigma)=\left(\int_{\mathbb{P}}A^{-1}d\sigma(A)\right)^{-1}\\
&d\nu(s)=\delta_{\{1\}}(s),\ \ f(x)=x-1,\ \ &\Lambda(\nu\times\sigma)=\int_{\mathbb{P}}Ad\sigma(A)\\
&d\nu(s)=ds,\ \ f(x)=\log x,\ \ &\Lambda(\nu\times\sigma)=\tilde{\Lambda}(\sigma),
\end{eqnarray*}
where $\tilde{\Lambda}(\sigma)$ is the original Karcher mean which has been studied first in \cite{lawsonlim1} for positive operators and in the case of positive matrices at many other places, for example see \cite{bhatiaholbrook,bhatiakarandikar,bhatia2,lawsonlim,limpalfia,moakher}. In particular the first mean is the harmonic mean while the second is the arithmetic mean. Notice that all these cases are extensions of these well known means to the case of measures and operators.

There are other recently found means which are all lambda operator means. The log-determinant $\alpha$-divergence means found in \cite{chebbi} for positive matrices are also such with $\nu$ supported over a singleton $\{s\}$ with $s\in[0,1]$.
\end{example}

The arithmetic and harmonic means have special properties.

\begin{proposition}
Let $\mu\in\mathscr{P}([0,1]\times\mathbb{P})$. Then
\begin{equation}
\left(\int_{\mathbb{P}}A^{-1}d\mu(s,A)\right)^{-1}\leq \Lambda(\mu)\leq \int_{\mathbb{P}}Ad\mu(s,A).
\end{equation}
\end{proposition}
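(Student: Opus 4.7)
The plan is to bracket $\mu$ between two pushforward measures whose lambda operator means are already identified in Example~\ref{ex:Lambda}, and then invoke the monotonicity packaged in property~(4) of Proposition~\ref{MPro} together with the strong convergence $L_{t}(\mu)\to\Lambda(\mu)$ from Theorem~\ref{inducedconv}. Concretely, I define $\mu_0$ and $\mu_1$ as the pushforwards of $\mu$ under the continuous maps $(s,A)\mapsto(0,A)$ and $(s,A)\mapsto(1,A)$, respectively; both lie in $\mathscr{P}([0,1]\times\mathbb{P})$ since $\supp\mu$ is bounded and each pushforward map is constant in the first coordinate.

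A direct computation from Proposition~\ref{P:reprprincipalf} gives $f_{0,t}(x)=\bigl((1-t)+tx^{-1}\bigr)^{-1}$ and $f_{1,t}(x)=(1-t)+tx$, so that $M_{0,t}(X,A)$ and $M_{1,t}(X,A)$ are exactly the weighted harmonic and the weighted arithmetic means of $X$ and $A$. By Lemma~\ref{maxmininp} these are, respectively, the minimal and maximal representing functions in $\mathfrak{m}(t)$, which yields the pointwise operator inequalities
\begin{equation*}
M_{0,t}(X,A)\leq M_{s,t}(X,A)\leq M_{1,t}(X,A)\qquad\text{for every }s\in[0,1],\ X,A\in\mathbb{P}.
\end{equation*}
Integrating these against $\mu$, and noting that the flanking integrands depend only on $A$ so they match the integrals against the pushforwards, one obtains
\begin{equation*}
\int M_{s,t}(X,A)\,d\mu_0(s,A)\leq \int M_{s,t}(X,A)\,d\mu(s,A)\leq \int M_{s,t}(X,A)\,d\mu_1(s,A)
\end{equation*}
for every $X\in\mathbb{P}$ and every $t\in(0,1]$.

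Property~(4) of Proposition~\ref{MPro} then forces $L_{t}(\mu_0)\leq L_{t}(\mu)\leq L_{t}(\mu_1)$, and passing to the strong limit $t\to 0+$ using Theorem~\ref{inducedconv} (and the fact that the positive definite order is strongly closed) yields $\Lambda(\mu_0)\leq\Lambda(\mu)\leq\Lambda(\mu_1)$. Example~\ref{ex:Lambda} finally identifies $\Lambda(\mu_0)=\bigl(\int A^{-1}\,d\mu(s,A)\bigr)^{-1}$ and $\Lambda(\mu_1)=\int A\,d\mu(s,A)$, which is exactly the claimed double inequality. There is no genuine obstacle in this argument; the only point requiring care is checking that $\mu_0$ and $\mu_1$ are admissible for Proposition~\ref{MPro}~(4) and Example~\ref{ex:Lambda}, which is immediate from the boundedness of $\supp\mu$ and the continuity of the coordinate-freezing pushforward maps.
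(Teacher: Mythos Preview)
Your proof is correct and follows essentially the same route as the paper: both arguments rest on the bounds $((1-t)+tx^{-1})^{-1}\leq f_{s,t}(x)\leq (1-t)+tx$ from Lemma~\ref{maxmininp}, integrate against $\mu$, apply the monotonicity in property~(4) (the paper cites it from Theorem~\ref{lambdaprop}, you from Proposition~\ref{MPro} plus a strong limit, which is how Theorem~\ref{lambdaprop}(4) is proved anyway), and identify the extremal means via Example~\ref{ex:Lambda}. The only cosmetic difference is that you make the comparison measures $\mu_0,\mu_1$ explicit as pushforwards, which the paper leaves implicit.
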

\begin{proof}
First of all by Lemma~\ref{maxmininp} we have for all $s,t\in [0,1]$ that
\begin{equation*}
\left((1-t)+tx^{-1}\right)^{-1}\leq f_{s,t}(x)\leq (1-t)+tx,
\end{equation*}
in other words
\begin{equation*}
[(1-t)X^{-1}+tA^{-1}]^{-1}\leq M_{s,t}(X,A)\leq (1-t)X+tA.
\end{equation*}
Now integrate the above inequality with respect to $\mu$ to conclude the assertion using property (4) in Theorem~\ref{lambdaprop} with Theorem~\ref{karchersatisfied} and the previous Example~\ref{ex:Lambda}.
\end{proof}

One might wonder whether the induced means $L_t(\mu)$ are different from lambda operator means. It turns out that the induced means fit into the overall picture.

\begin{theorem}\label{T:inducedlambda}
Let $t\in(0,1]$ and $\mu\in\mathscr{P}([0,1]\times\mathbb{P})$. Then $L_t(\mu)$ is a lambda operator mean, it is the unique solution of the generalized Karcher equation
$$\int_{[0,1]\times\mathbb{P}}\log_{t+s(1-t)}(X^{-1/2}AX^{-1/2})d\mu(s,A)=0.$$
\end{theorem}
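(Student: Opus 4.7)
The plan is to reduce the defining fixed-point equation of $L_t(\mu)$ to a generalized Karcher equation by means of a simple algebraic identity between $f_{s,t}$ and $\l_{t+s(1-t)}$, and then invoke the uniqueness result of Theorem~\ref{uniquegeneralizedkarcher}.

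First I would verify, by direct computation from \eqref{principalf}, that
\begin{equation*}
f_{s,t}(x) - 1 = \frac{t(x-1)}{(1-t)(1-s)x + t + s(1-t)} = t\,\l_{t+s(1-t)}(x)
\end{equation*}
for all $s\in[0,1]$, $t\in(0,1]$, and $x>0$. This is the key identity: the representing function $f_{s,t}$ of the principal mean $M_{s,t}$ is, up to the constant $1$ and the factor $t$, precisely one of the rational functions $\l_r$ parametrising $\mathfrak{L}$ through the integral formula of Corollary~\ref{C:intLrepr}, with the index shifted from $s$ to $r=t+s(1-t)$.

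Next, by definition $X=L_t(\mu)$ is the unique positive definite solution of $X=\int_{[0,1]\times\mathbb{P}} M_{s,t}(X,A)\,d\mu(s,A)$. Conjugating both sides by $X^{-1/2}$ and using that $\mu$ is a probability measure gives
\begin{equation*}
I=\int_{[0,1]\times\mathbb{P}} f_{s,t}(X^{-1/2}AX^{-1/2})\,d\mu(s,A).
\end{equation*}
Substituting the identity above and dividing by $t>0$ yields
\begin{equation*}
0=\int_{[0,1]\times\mathbb{P}} \l_{t+s(1-t)}(X^{-1/2}AX^{-1/2})\,d\mu(s,A),
\end{equation*}
which is exactly the generalized Karcher equation of the statement.

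Finally, to show that $L_t(\mu)$ is in fact a lambda operator mean and that the above equation has $L_t(\mu)$ as its only solution, introduce the pushforward $\mu':=\phi_*\mu$ under $\phi(s,A):=(t+s(1-t),A)$. Since $t+s(1-t)\in[t,1]\subseteq[0,1]$ and $\supp\mu$ is bounded, we have $\mu'\in\mathscr{P}([0,1]\times\mathbb{P})$. The change of variables formula rewrites the generalized Karcher equation as
\begin{equation*}
0=\int_{[0,1]\times\mathbb{P}} \l_{s'}(X^{-1/2}AX^{-1/2})\,d\mu'(s',A),
\end{equation*}
so $L_t(\mu)\in K(\mu')$. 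By Theorem~\ref{uniquegeneralizedkarcher}, $K(\mu')=\{\Lambda(\mu')\}$, and hence $L_t(\mu)=\Lambda(\mu')$ is a lambda operator mean, and is the unique solution of the generalized Karcher equation in the statement. There is no serious obstacle here: the identity $f_{s,t}(x)=1+t\,\l_{t+s(1-t)}(x)$ makes the entire proof collapse to a push-forward of the parameter measure followed by a direct appeal to the previously established uniqueness theorem.
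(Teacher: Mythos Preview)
Your proof is correct and follows essentially the same approach as the paper: both hinge on the algebraic identity $f_{s,t}(x)-1=t\,\l_{t+s(1-t)}(x)$, which transforms the fixed-point equation defining $L_t(\mu)$ into a generalized Karcher equation. Your version is slightly more explicit than the paper's (you spell out the pushforward $\mu'=\phi_*\mu$ and invoke Theorem~\ref{uniquegeneralizedkarcher} directly to identify $L_t(\mu)=\Lambda(\mu')$), whereas the paper simply records the identity and notes $\l_{t+s(1-t)}\in\mathfrak{L}$, leaving the uniqueness appeal implicit.
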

\begin{proof}
By definition \eqref{meanequ3}, $L_t(\mu)$ is the unique solution of
\begin{equation*}
X=\int_{[0,1]\times\mathbb{P}}M_{s,t}(X,A)d\mu(s,A).
\end{equation*}
So we have
\begin{eqnarray*}
X&=&\int_{[0,1]\times\mathbb{P}}M_{s,t}(X,A)d\mu(s,A)\\
0&=&\int_{[0,1]\times\mathbb{P}}M_{s,t}(X,A)-Xd\mu(s,A)\\
0&=&\int_{[0,1]\times\mathbb{P}}\frac{M_{s,t}(X,A)-X}{t}d\mu(s,A)\\
0&=&\int_{[0,1]\times\mathbb{P}}\frac{f_{s,t}(X^{-1/2}AX^{-1/2})-I}{t}d\mu(s,A).
\end{eqnarray*}
Now it is a routine calculation to see that $\frac{f_{s,t}(x)-1}{t}=\log_{t+s(1-t)}(x)$ and $\log_{t+s(1-t)}\in\mathfrak{L}$, since $t+s(1-t)\in[0,1].$
\end{proof}

The above result is a general phenomenon:

\begin{proposition}
Let $f\in\mathfrak{m}(t)$ for $t\in(0,1)$. Then $\frac{f(x)-1}{t}$ is in $\mathfrak{L}$.
\end{proposition}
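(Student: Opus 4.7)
The proof is essentially immediate from the definitions, so I would simply verify the three defining conditions of $\mathfrak{L}$ for the function
\[
g(x):=\frac{f(x)-1}{t}.
\]

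First, I would observe that $g$ is operator monotone on $(0,\infty)$. Indeed, operator monotonicity is preserved under affine transformations of the form $x\mapsto ax+b$ with $a>0$: if $A\leq B$ are self-adjoint operators in $\mathbb{P}$, then $f(A)\leq f(B)$ by the hypothesis $f\in\mathfrak{m}(t)$, and multiplying by $1/t>0$ and subtracting $I/t$ preserves the order. Hence $g(A)\leq g(B)$.

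Next, I would check the two normalization conditions. From $f(1)=1$ we get $g(1)=\frac{f(1)-1}{t}=0$, and from $f'(1)=t$ we get $g'(1)=\frac{f'(1)}{t}=\frac{t}{t}=1$. Therefore $g\in\mathfrak{L}$.

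There is no real obstacle in this argument; the content of the proposition is simply that the normalization of $\mathfrak{m}(t)$ at $x=1$ and the normalization of $\mathfrak{L}$ at $x=1$ are compatible after the affine rescaling $(f-1)/t$. The importance of the proposition lies not in its proof but in its use in Theorem~\ref{T:inducedlambda}, where it lets one recognize $\frac{f_{s,t}(x)-1}{t}$ as a genuine element of $\mathfrak{L}$, and thereby identify every induced operator mean $L_{t}(\mu)$ as a lambda operator mean.
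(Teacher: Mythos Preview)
Your argument is correct and is exactly the ``simple computation'' the paper alludes to: you verify operator monotonicity is preserved under the affine map $x\mapsto (x-1)/t$ and check the two normalizations $g(1)=0$, $g'(1)=1$. There is nothing more to add.
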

\begin{proof}
Simple computation.
\end{proof}

\begin{corollary}\label{C:powermeans}
Let $\sigma\in\mathscr{P}(\mathbb{P})$ and let $f_t\in\mathfrak{m}(t)$ be the representing function of the operator mean $M_t(X,A)$ with $t\in(0,1)$. Then
\begin{equation}\label{eq:geninduced}
X=\int_{\mathbb{P}}M_t(X,A)d\sigma(A)
\end{equation}
has a unique solution in $\mathbb{P}$, moreover it is the unique solution of the generalized Karcher equation of the form
\begin{equation*}
\int_{\mathbb{P}}X^{1/2}g\left(X^{-1/2}AX^{-1/2}\right)X^{1/2}d\sigma(A)=0
\end{equation*}
where $g(x)=\frac{f_t(x)-1}{t}$ is in $\mathfrak{L}$.
\end{corollary}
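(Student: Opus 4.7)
The proof parallels that of Theorem~\ref{T:inducedlambda}, with the single representing function $f_t\in\mathfrak{m}(t)$ playing the role of the family $f_{s,t}$. First, I would note that $g\in\mathfrak{L}$ is immediate from the previous proposition: the normalization $g(1)=0$ and $g'(1)=1$ follow from $f_t(1)=1$, $f_t'(1)=t$, and operator monotonicity of $g$ is inherited from $f_t$ since $t>0$.

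For existence and uniqueness of a solution of \eqref{eq:geninduced}, I would apply Banach's fixed point theorem to
\begin{equation*}
F(X):=\int_{\mathbb{P}}M_t(X,A)\,d\sigma(A)
\end{equation*}
on a sufficiently large closed Thompson ball $\overline{B}_I(r)$. Writing $M_t(X,A)=M_t'(A,X)$ via the transpose mean (whose representing function $f_t^{tr}(x)=xf_t(1/x)$ is not the identity, since $f_t'(1)=t\in(0,1)$ forces $f_t\not\equiv 1$), Theorem~\ref{contract} yields a contraction coefficient $\rho(r)<1$ for $X\mapsto M_t(X,A)$ on $\overline{B}_A(r)$ that depends only on $r$ and on the fixed representing measure of $M_t'$, not on $A$. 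Since $\sigma$ has bounded support, choosing $r$ so that $\supp\sigma\subseteq\overline{B}_I(r)$ and combining this uniform contractivity bound with Lemma~\ref{L:infinite4} gives strict contractivity of $F$ on $\overline{B}_I(r)$. The self-map property $F(\overline{B}_I(r))\subseteq\overline{B}_I(r)$ follows as in Proposition~\ref{P:uniquesol} from the arithmetic/harmonic sandwich
\begin{equation*}
[(1-t)X^{-1}+tA^{-1}]^{-1}\leq M_t(X,A)\leq(1-t)X+tA
\end{equation*}
provided by Lemma~\ref{maxmininp}, so Banach's theorem delivers the unique fixed point in $\mathbb{P}$.

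The identification with the generalized Karcher equation is then purely algebraic: subtracting $X$ from both sides of $X=F(X)$, dividing by $t$, and using linearity of the weak operator Pettis integral together with
\begin{equation*}
\frac{M_t(X,A)-X}{t}=X^{1/2}\,\frac{f_t(X^{-1/2}AX^{-1/2})-I}{t}\,X^{1/2}=X^{1/2}g(X^{-1/2}AX^{-1/2})X^{1/2},
\end{equation*}
transforms \eqref{eq:geninduced} into the generalized Karcher equation for $g$ pointwise, so existence and uniqueness transfer automatically.

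The main obstacle is ensuring uniformity of the contraction coefficient in $A$ across $\supp\sigma$ before integrating: this hinges on the observation that the Lipschitz constant produced by Theorem~\ref{contract} depends only on the ball radius and on the Kubo–Ando representing measure of $f_t^{tr}$ (which is fixed once $M_t$ is fixed), never on the concrete operator $A$; only with this uniformity may one pass the supremum outside the integral through Lemma~\ref{L:infinite4}.
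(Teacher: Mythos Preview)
Your argument is correct, and in fact somewhat more self-contained than the paper's. The paper's one-line proof (``simple computation as in the proof of Theorem~\ref{T:inducedlambda}'') really just means: perform the algebraic rewriting
\[
X=\int_{\mathbb{P}}M_t(X,A)\,d\sigma(A)\quad\Longleftrightarrow\quad \int_{\mathbb{P}}X^{1/2}g(X^{-1/2}AX^{-1/2})X^{1/2}\,d\sigma(A)=0,
\]
observe that $g\in\mathfrak{L}$ by the preceding proposition, represent $g$ via Corollary~\ref{C:intLrepr} as $g=\int_{[0,1]}\l_s\,d\nu(s)$, and then invoke the full uniqueness machinery of Theorem~\ref{uniquegeneralizedkarcher} (together with Lemma~\ref{L:fubini}) for the resulting generalized Karcher equation with product measure $\nu\times\sigma$. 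Existence and uniqueness of the fixed point in \eqref{eq:geninduced} are then read off from the bijection, not established directly.

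You go the other way: you prove existence and uniqueness of the fixed point first, by a direct Banach contraction argument built on Theorem~\ref{contract} and Lemma~\ref{L:infinite4}, and only afterwards transfer the conclusion to the Karcher equation through the same algebra. This avoids appealing to the heavy Theorem~\ref{uniquegeneralizedkarcher} at the cost of rechecking the uniformity of the contraction coefficient in $A$ and the self-map property on $\overline{B}_I(r)$ (both of which you handle correctly, using that the Lipschitz bound in Theorem~\ref{contract} depends only on the radius and the representing measure of the transpose mean). Your route is conceptually cleaner if one wants the corollary to stand on its own; the paper's route is shorter because by this point in the exposition all the hard work has already been absorbed into Theorem~\ref{uniquegeneralizedkarcher}.
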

\begin{proof}
Simple computation as in the proof of Theorem~\ref{T:inducedlambda}.
\end{proof}

Corollary~\ref{C:powermeans} tells us that the matrix power means $P_t(\omega;{\Bbb A})$ defined as the unique positive definite solution of \eqref{powermeanequ2} are actually lambda operator means for $t\in[-1,1]$ i.e., they are unique solutions of generalized Karcher equations. Let us provide the details in the following example.

\begin{example}\label{ex:Lambda2}
For $t\in(0,1]$, the matrix power means $P_t(\omega;{\Bbb A})$ are the unique positive definite solutions of
\begin{equation}\label{eq:expowermean}
X=\sum_{i=1}^{k}w_{i}X\#_tA_i
\end{equation}
where
\begin{equation*}
A\#_tB=A^{1/2}\left(A^{-1/2}BA^{-1/2}\right)^tA^{1/2}
\end{equation*}
is the weighted geometric mean of $A,B\in\mathbb{P}$, its representing function is $f_t(x)=x^t$ and $f_t\in\mathfrak{m}(t)$. For $\sigma\in\mathscr{P}(\mathbb{P})$ let us consider a generalized form of \eqref{eq:expowermean} in the form
\begin{equation}\label{eq:expowermean2}
X=\int_{\mathbb{P}}X\#_tAd\sigma(A).
\end{equation}
By Corollary~\ref{C:powermeans} this is equivalent to
\begin{equation}\label{eq:expowermeankarcher}
0=\int_{\mathbb{P}}X^{1/2}f\left(X^{-1/2}AX^{-1/2}\right)X^{1/2}d\sigma(A)
\end{equation}
where $f(x):=\frac{x^t-1}{t}$ and $f\in\mathfrak{L}$. Now using the representation V.48 in \cite{bhatia} for the power function $x^t$, we have that
\begin{eqnarray*}
f(x)=\frac{x^t-1}{t}&=&\frac{\cos\frac{t\pi}{2}-1}{t}+\int_{-\infty}^{0}\left(\frac{1}{\lambda-x}-\frac{\lambda}{\lambda^2+1}\right)|\lambda|^t\frac{\sin(t\pi)}{t\pi}d\lambda\\
&=&\frac{\cos\frac{t\pi}{2}-1}{t}+\int_{0}^{\infty}\left(\frac{\lambda}{\lambda^2+1}-\frac{1}{\lambda+x}\right)\lambda^t\frac{\sin(t\pi)}{t\pi}d\lambda.
\end{eqnarray*}
After applying change of variables and using Corollary~\ref{C:intLrepr} and Corollary~\ref{C:lambdaintchar} we get that
$$f(x)=\frac{x^t-1}{t}=\int_{0}^{1}\frac{x-1}{(1-s)x+s}d\nu(s)$$
where
$$d\nu(s)=\frac{s^t}{(1-s)^t}\frac{\sin(t\pi)}{t\pi}ds.$$
Hence by \eqref{eq:expowermeankarcher} the matrix power means $P_t$ are unique solutions of the generalized Karcher equations
\begin{equation}\label{eq:expowermeankarcher2}
0=\int_{\mathbb{P}}\int_{0}^{1}X^{1/2}\l_s\left(X^{-1/2}AX^{-1/2}\right)X^{1/2}d\nu(s)d\sigma(A).
\end{equation}
In the finite dimensional case one can prove more by applying Theorem~\ref{finalconclusion} in the next section to conclude that the matrix power means are global minimizers of geodesically convex functions. Similar calculations can be carried out for negative values of $t\in[-1,0)$. Also by taking the limit $t\to 0$ we obtain the case of the original Karcher mean considered in Example~\ref{ex:Lambda}.
\end{example}

\section{Conclusion}
The previous sections provide the completion of Theorem~\ref{T:optimize} in the case when $\mathbb{P}$ is the finite dimensional cone of positive matrices. Theorem~\ref{uniquegeneralizedkarcher} tells us that the gradient equations
\begin{equation}
\int_{\mathfrak{L}\times\mathbb{P}}X^{1/2}f(X^{-1/2}AX^{-1/2})X^{1/2}d\sigma(f,A)=0
\end{equation}
given in \eqref{eq:optimize2} for $\sigma\in\mathscr{P}(\mathfrak{L}\times\mathbb{P})$ admit unique solutions in $\mathbb{P}$. Theorem~\ref{T:optimize} shows us that
\begin{equation}
\int_{[0,1]\times\mathbb{P}}X^{1/2}\l_s(X^{-1/2}AX^{-1/2})X^{1/2}d\mu(s,A)=0
\end{equation}
has a unique solution and now it is not difficult to see that it is the gradient equation of the critical points of the strictly geodesically convex functional
\begin{equation*}
\int_{[0,1]\times\mathbb{P}}LD^s(X,A)d\mu(s,A).
\end{equation*}
This argument essentially can be carried out using Theorem~\ref{T:RiemHess} and Theorem~\ref{T:RiemaGrad}. Hence we have the result:
\begin{theorem}\label{finalconclusion}
Let $\mathbb{P}$ be finite dimensional and $\mu\in\mathscr{P}([0,1]\times\mathbb{P})$. Then
\begin{equation}
\Lambda(\mu)=\argmin_{X\in\mathbb{P}}\int_{[0,1]\times\mathbb{P}}LD^s(X,A)d\mu(s,A).
\end{equation}
\end{theorem}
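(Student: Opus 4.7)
The plan is to show that $\Lambda(\mu)$ is the unique critical point of the objective functional
$$F(X) := \int_{[0,1]\times\mathbb{P}} LD^s(X,A)\, d\mu(s,A),$$
and then use strict geodesic convexity to promote criticality to the global minimum.

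First, I would compute the Riemannian gradient of $F$. By Theorem~\ref{T:RiemaGrad}, for each fixed $A \in \mathbb{P}$ the inner integral $X \mapsto \int_{[0,1]} LD^s(X,A)\, d\nu(s)$ has Riemannian gradient $-X^{1/2} f(X^{-1/2}AX^{-1/2}) X^{1/2}$ with $f\in\mathfrak{L}$ the function represented by $\nu$ via Corollary~\ref{C:intLrepr}. Since $\supp\mu$ is bounded and the integrand is smooth in $X$ uniformly over its support, a Fubini-type exchange (in the spirit of Lemma~\ref{L:fubini}) combined with the interchange of the Fr\'echet derivative and the weak operator Pettis integral yields
$$\nabla F(X) = -\int_{[0,1]\times\mathbb{P}} X^{1/2} \l_s(X^{-1/2}AX^{-1/2}) X^{1/2}\, d\mu(s,A).$$
Thus the critical-point equation $\nabla F(X) = 0$ is exactly the generalized Karcher equation of Definition~\ref{generalizedkarcherequ}.

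Next, by Theorem~\ref{karchersatisfied}, $X_0 := \Lambda(\mu)$ solves the generalized Karcher equation, so $\nabla F(\Lambda(\mu)) = 0$; and by Theorem~\ref{uniquegeneralizedkarcher} it is the \emph{only} solution in $\mathbb{P}$. Moreover, since $X \mapsto LD^s(X,A)$ is strictly geodesically convex for each fixed $(s,A)$ by Theorem~\ref{T:RiemHess}, and a positive combination of strictly geodesically convex functions is again strictly geodesically convex, the functional $F$ itself is strictly geodesically convex on $\mathbb{P}$.

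For the final step, let $Y \in \mathbb{P}$ with $Y \neq \Lambda(\mu)$ and let $\gamma:[0,1] \to \mathbb{P}$ be the unique geodesic with $\gamma(0) = \Lambda(\mu)$ and $\gamma(1) = Y$. The real-valued function $\phi(t) := F(\gamma(t))$ is strictly convex with $\phi'(0) = \langle \nabla F(\Lambda(\mu)), \gamma'(0) \rangle_{\Lambda(\mu)} = 0$, so strict convexity forces $\phi(1) > \phi(0)$, i.e.\ $F(Y) > F(\Lambda(\mu))$. Hence $\Lambda(\mu)$ is the unique global minimizer of $F$. The main obstacle is really only step one, namely rigorously justifying the exchange of the Fr\'echet derivative with the measure integral; this is standard given the boundedness of $\supp\mu$ together with local uniform smoothness of $(s,A,X) \mapsto \nabla_X LD^s(X,A)$, but is the sole nonstructural input. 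The remaining steps are purely structural and invoke directly Theorem~\ref{T:RiemHess}, Theorem~\ref{karchersatisfied} and Theorem~\ref{uniquegeneralizedkarcher}.
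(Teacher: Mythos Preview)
Your proposal is correct and follows essentially the same approach as the paper: compute the Riemannian gradient via Theorem~\ref{T:RiemaGrad} to identify the critical-point equation with the generalized Karcher equation, invoke Theorem~\ref{karchersatisfied} and Theorem~\ref{uniquegeneralizedkarcher} to see that $\Lambda(\mu)$ is its unique solution, and then use the strict geodesic convexity of Theorem~\ref{T:RiemHess} to upgrade the critical point to the global minimizer. The paper packages the first and last steps through Theorem~\ref{T:optimize}, but the substance is identical, and your identification of the derivative--integral interchange as the only nonstructural point matches the paper's treatment.
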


\begin{remark}
More questions can be asked about the size of the set generated by $\Lambda(\mu)$ for various different $\mu\in\mathscr{P}([0,1]\times\mathbb{P})$. Perhaps Corollary~\ref{cor:opmean2} provides the correct setting. How large is the set of two-variable lambda operator means in $\mathfrak{M}$? This question is nontrivial and it is very likely that there are operator means in $\mathfrak{M}$ which are not lambda operator means. In particular if $w=1/2$ in Corollary~\ref{cor:opmean2} we have some examples when a two variable operator mean is the unique solution of two different generalized Karcher equations. The geometric mean $A\#_{1/2}B=A^{1/2}(A^{-1/2}BA^{-1/2})^{1/2}A^{1/2}$ is such, see \cite{chebbi}. However this redundancy only occurs if $w=1/2$. This result, though might be important, is out of scope of this paper and will be presented elsewhere. So all in all the set of lambda operator means is of the "size" of $\mathfrak{L}$ if $w\neq 1/2$. In this case we even assumed that the same $f\in\mathfrak{L}$ is considered in the generalized Karcher equation i.e., $\mu=\nu\times\sigma$ is a product. In the general case we expect the set of lambda operator means to be even larger.
\end{remark}

\section*{Acknowledgment}
The author would like to thank the anonymous referee for valuable comments and suggestions. This work was partly supported by the Research Fellowship of the Canon Foundation, SGU project of Kyoto University, the JSPS international research fellowship grant No. 14F04320, the "Lend\"ulet" Program (LP2012-46/2012) of the Hungarian Academy of Sciences and the National Research Foundation of Korea (NRF) grant funded by the Korea government (MEST) No. 2015R1A3A2031159.




\end{document}